\newif\ifaistats
\newif\ifarxiv
\newif\ifneurips
\newif\ificml
\newif\ifextension
\newif\ifaistatsnot
\newif\ifarxivnot
\newif\ifneuripsnot
\newif\ificmlnot
\newif\ifextensionnot
\newtheorem{theorem}{Theorem}
\newtheorem{proposition}{Proposition}
\newtheorem{lemma}{Lemma}
\newtheorem{assumption}{Assumption}
\newtheorem{definition}{Definition}
\DeclareMathOperator*{\argmin}{argmin}
\newcommand{\EE}{\mathbb{E}}
\newcommand{\ec}[2][]{\ensuremath{\mathbb{E}_{#1} \left[#2\right]}}
\newcommand{\R}{\mathbb{R}}
\newcommand{\eqdef}{\stackrel{\text{def}}{=}}
\newcommand{\cO}{{\cal O}}
\newcommand{\opt}{x^*}
\newcolumntype{?}{!{\vrule width 1pt}}
\definecolor{mydarkgreen}{RGB}{39,130,67}
\definecolor{mydarkred}{RGB}{192,47,25}
\newcommand{\cmark}{{\color{mydarkgreen}\ding{51}}}%
\newcommand{\xmark}{{\color{mydarkred} \ding{55}}}%
\newcommand{\norm}[1]{{\left \| #1 \right\|}}
\newcommand{\E}[1]{\mathbb{E}\left[#1\right] }
\newcommand{\Range}[1]{{\rm Range}\left( #1\right)}
\newcommand{\fopt}{f^*}
\newcommand{\Null}[1]{{\rm Null}\left( #1\right)}
\newcommand{\efa}{&& \hspace{-1.5cm}}
\newcommand{\Rd}{\mathbb R^d}
\newcommand{\Lstandard}{{L_{\text{sc}}}}
\newcommand{\Lsemi}{{L_{\text{semi}}}}
\newcommand{\Lstrongly}{{L_{\text{str}}}}
\newcommand{\Lalg}{{L_{\text{alg}}}}
\newcommand{\cD}{\mathcal D}
\newcommand{\s}{\mathbf S}
\newcommand{\sk}{\mathbf S_k}
\newcommand{\gS}{\nabla_{\s} f}
\newcommand{\gSk}{\nabla_{\sk} f}
\newcommand{\hS}{\nabla_{\s}^2 f}
\newcommand{\hSk}{\nabla_{\sk}^2 f}
\newcommand{\st}{\s ^ \top}
\newcommand{\Ls}{L_{\s}}
\newcommand{\als}[1]{\alpha_{#1}}
\newcommand{\modelS}[1]{T_{\s} \left(#1\right)}
\newcommand{\modelSk}[1]{T_{\sk} \left(#1\right)}
\newcommand{\normMS}[2]{{\left \| #1 \right\|}_{#2, \s}}
\newcommand{\normsMS}[2]{{\left \| #1 \right\|}_{#2, \s}^2}
\newcommand{\normMSd}[2]{{\left \| #1 \right\|}_{#2, \s}^*}
\newcommand{\normsMSd}[2]{{\left \| #1 \right\|}_{#2, \s}^{*2}}
\newcommand{\normMSdk}[2]{{\left \| #1 \right\|}_{#2, \sk}^*}
\newcommand{\normsMSdk}[2]{{\left \| #1 \right\|}_{#2, \sk}^{*2}}
\newcommand{\level}{\mathcal Q(x_0)}
\newcommand{\p}{\mathbf P_x}
\newcommand{\pk}[1]{\mathbf P_{x^{#1}}}
\newcommand{\murel}{\hat \mu}
\newcommand{\Lrel}{\hat L}
\newcommand{\Lrels}{\hat L_{\sk}}
\newcommand{\okd}{\mathcal O \left( k^{-2} \right)}
\newcommand{\algstyle}{{\sf}}
\newcommand{\aicn}{{\algstyle AICN}}
\newcommand{\sgn}{{\algstyle SGN}}
\newcommand{\sscn}{{\algstyle SSCN}}
\newcommand{\rsn}{{\algstyle RSN}}
\newcommand{\cd}{{\algstyle CD}}
\newcommand{\acd}{{\algstyle ACD}}
\newcommand{\libsvm}{{\algstyle LIBSVM}}
\newcommand{\figsize}{0.24\textwidth}
\newcommand{\figsizeap}{0.49\textwidth}
\newcommand{\tn}[1]{\tnote{\color{red}(#1)}}
\newcommand{\afrac}{\frac}
\newcommand{\Tr}{\text{Tr}}
\newcommand{\h}{\nabla^2 f}
\newcommand{\g}{\nabla f}
\newcommand{\mI}{\mathbf I}
\newcommand{\normM}[2]{{\left \| #1 \right\|}_{#2}}
\newcommand{\normsM}[2]{{\left \| #1 \right\|}_{#2}^2}
\newcommand{\normMd}[2]{{\left \| #1 \right\|}_{#2}^*}
\newcommand{\normsMd}[2]{{\left \| #1 \right\|}_{#2}^{*2}}
\newcommand{\sumin}[2]{ \sum \limits_{#1=1}^{#2}}
\newcommand{\N}{\mathbb N}
\newcommand{\ip}[2]{\left\langle #1, #2  \right \rangle}
\def\<#1,#2>{\left\langle #1,#2\right\rangle}
\newlist{myitemize}{itemize}{3}
\setlist[myitemize,1]{label=\textbullet,leftmargin=0in}
\newcommand{\lb}{\left\lbrace}
\newcommand{\rb}{\right\rbrace}
\newcommand{\la}{\left\langle}
\newcommand{\ra}{\right\rangle}
\newcommand{\bbE}{\mathbb{E}}
    \newcommand{\State}{\STATE}
    \newcommand{\For}{\FOR}
    \newcommand{\EndFor}{\ENDFOR}
\newcommand{\sap}{{sketch-and-project}}
\newcommand{\Sap}{{ Sketch-and-project}}
\newcommand{\en}{{ Exact Newton descent}}
\newcommand{\newton}{{Newton}}
\newcommand{\cnewton}{{cubic Newton}}
\newcommand{\Cnewton}{{Cubic Newton}}
\newcommand{\Dnewton}{{Damped Newton}}
\newcommand{\rnewton}{{regularized Newton}}
\newcommand{\Rnewton}{{Regularized Newton}}
\newcommand{\Gnewton}{{Glob.~reg.~Newton}}
\newcommand{\vspacefig}{0mm}
    \author{
    Slavom\'ir Hanzely\footremember{MBZUAI} {Mohammed bin Zayed University of Artificial Intelligence}
    }
    \date{}
    \title{Sketch-and-Project Meets Newton Method: \newline Global $\mathcal O \left( k^{-2} \right)$ Convergence with Low-Rank Updates}
    \author{
    Slavom\'ir Hanzely\footremember{KAUST} {King Abdullah University of Science and Technology, Saudi Arabia}
    }
    \date{}
    \title{Sketch-and-Project Meets Newton Method: \newline Global $\mathcal O \left( k^{-2} \right)$ Convergence with Low-Rank Updates}
    \icmltitlerunning{Newton Method with Global $\mathcal O \left( k^{-2} \right)$ Rate and Low-Rank Updates}
    \author{
    }
    \date{}
    \title{Sketch-and-Project Meets Newton Method: \newline Global $\mathcal O \left( k^{-2} \right)$ Convergence with Low-Rank Updates}
\begin{document}
    \ifaistats
    
    \twocolumn[
    \aistatstitle{Sketch-and-Project Meets Newton Method: Global $\mathcal O \left( k^{-2} \right)$ Convergence with Low-Rank Updates}
    \aistatsauthor{ Author 1 \And Author 2 \And  Author 3 }
    \aistatsaddress{ Institution 1 \And  Institution 2 \And Institution 3 } ]
    \fi

    \ifaistatsnot
	   \maketitle
    \fi
 
	\begin{abstract} 
    In this paper, we propose the first sketch-and-project Newton method with the fast $\mathcal O \left( k^{-2} \right)$ global convergence rate for self-concordant functions. 
    Our method, \sgn{}, can be viewed in three ways: i) as a sketch-and-project algorithm projecting updates of the Newton method, ii) as a cubically regularized Newton method in the sketched subspaces, and iii) as a damped Newton method in the sketched subspaces.

    \sgn{} inherits the best of all three worlds: the cheap iteration costs of the sketch-and-project methods, the state-of-the-art $\mathcal O \left( k^{-2} \right)$ global convergence rate of the full-rank Newton-like methods, and the algorithm simplicity of the damped Newton methods. 
    Finally, we demonstrate its comparable empirical performance to the baseline algorithms.
	\end{abstract}

\setlist[itemize]{itemsep=-.2em}
\setlist{leftmargin=*}

\section{Introduction}
Second-order methods have always been fundamental for both scientific and industrial computing. Their rich history can be traced back to the works \citet{Newton}, \citet{Raphson}, and \citet{Simpson}, and they have undergone extensive development since \citep{kantorovich1948functional, more1978levenberg, griewank1981modification}. For the more historical development of classical methods, we refer the reader to \citet{ypma1995historical}. The number of practical applications is enormous, with over a thousand papers included in the survey of \citet{conn2000trust} on trust-region and quasi-Newton methods alone.

Second-order methods are highly desirable due to their invariance to rescalings and coordinate transformations, which significantly reduces the complexity of hyperparameter tuning. Moreover, this invariance allows convergence independent of the conditioning of the underlying problem. In contrast, the convergence rate of first-order methods is fundamentally dependent on the function conditioning. Moreover, the first-order methods can be sensitive to variable parametrization and function scale, hence parameter tuning (e.g., step size) is often crucial for efficient execution.

On the other hand, even the simplest and most classical second-order method, the Newton method, achieves an extremely fast, quadratic convergence rate (precision doubles in each iteration) when initialized sufficiently close to the solution. However, the convergence of the Newton method is limited only to the neighborhood of the solution. Several works, including \citet{jarre2016simple}, \citet{mascarenhas2007divergence}, \citet{bolte2022curiosities} demonstrate that when initialized far from optimum, the line search and the trust-region Newton-like methods can diverge on both convex and nonconvex problems.

\begin{table*}[t!]
    \centering
    \setlength\tabcolsep{6pt} 
    \begin{threeparttable}[t]
        {\scriptsize
            \renewcommand\arraystretch{2} 
            \caption[Global convex convergence rates of low-rank Newton methods]{
            Global convergence rates of low-rank Newton methods for convex and smooth functions. We report dependence on the number of iterations $k$. We use the fastest full-dimensional algorithms as the baseline, we highlight the best rate in blue.}
            \label{tab:setup_comparison}
            \centering 
            \begin{tabular}{?c?c|c?}
            \Xhline{2\arrayrulewidth}
             \makecell{\\ \\ \textbf{Update} \\ \textbf{direction}} \qquad \makecell{ \textbf{Update}\\ \textbf{oracle} \\ \\} & \makecell{\textbf{Full-dimensional} \\ (direction is deterministic)} & \makecell{\textbf{Low-rank} \\ (direction in expectation)}\\
             
             \Xhline{2\arrayrulewidth}
             \makecell{\textbf{Non-Newton} \\ \textbf{direction} }
             & \makecell{ {\color{blue} $\cO(k^{-2})$} \\
             Cubically regularized Newton \\  \citep{nesterov2006cubic}, \\
             Globally regularized Newton \\ \citep{mishchenko2021regularized, doikov2021optimization} } 
             
             & \makecell{$\cO(k^{-1})$ \\ 
              Stochastic Subspace Cubic Newton \\ \citep{hanzely2020stochastic} }\\               

             \hline
             \makecell{\textbf{Newton} \\ \textbf{direction}} 
             & \makecell{ {\color{blue} $\cO(k^{-2})$} \\ 
              Affine-Invariant Cubic Newton  \\ \citep{hanzely2022damped}}
             
             & \makecell{ {\color{blue} $\cO(k^{-2})$} \\ 
            \textbf{Sketchy Global Newton}  \textbf{(new)} \\ \hline
             {$ \cO(k^{-1})$}\\ 
              Randomized Subspace Newton \\ \citep{RSN}}\\
             \Xhline{2\arrayrulewidth}
            \end{tabular}
        }
    \end{threeparttable}
    \vspace{\vspacefig}
\end{table*}
\begin{algorithm}[t]
    \caption{\sgn{}: Sketchy Global Newton \textbf{(new)}} \label{alg:sgn}
    \begin{algorithmic}[1]
        \State \textbf{Requires:} Initial point $x^0 \in \R^d$, distribution of sketch matrices $\cD$ such that $\mathbb E_{\s \sim \cD} \left[ \p \right] = \afrac \tau d \mI$, 
		upper bound on semi-strong self-concordance constant $\Lalg \geq \Lsemi$
        \For {$k=0,1,2\dots$}
        \State Sample $\s_k \sim \cD$
        \State $\als k = \frac {-1 +\sqrt{1+2 \Lalg \normMSdk{\gSk(x^k)} {x^k} }}{\Lalg \normMSdk {\gSk(x^k)} {x^k} }$
        \State $x^{k+1} = x^k - \als k \s_k \left[\hSk(x^k)\right]^{\dagger} \gSk(x^k)$        
        \EndFor
    \end{algorithmic}
\end{algorithm}

\subsection{Demands of modern machine learning}
Despite the long history, research on second-order methods has been thriving to this day. 
Newton-like methods with the fast $\okd$ global rate were introduced relatively recently under the names Cubic Newton method \citep{nesterov2006cubic} or Globally regularized Newton methods \citep{doikov2021local,mishchenko2021regularized,hanzely2022damped}. The main limitation of these methods is their poor scalability for modern large-scale machine learning. Large datasets with numerous features necessitate well-scalable algorithms. While tricks and inexact approximations can be used to avoid computing the inverse Hessian, simply storing the Hessian becomes impractical when the dimensionality $d$ is large.
This challenge has served as a catalyst for the recent developments. To address the curse of the dimensionality, \citet{SDNA}, \citet{luo2016efficient}, \citet{RSN}, \citet{RBCN}, and \citet{hanzely2020stochastic} proposed Newton-like methods operating in random low-dimensional subspaces.
This approach, also known as \sap{} \citep{gower2015randomized}, substantially reduces the computational cost per iteration. However, this happens at the cost of slower, $\cO \left( k^{-1}\right)$, convergence rate \citep{gower2020variance, hanzely2020stochastic}.

\subsection{Contributions}
In this work, we argue that the \sap{} adaptations of second-order methods can be improved. To this end, we introduce the \textbf{first} \sap{} method (\sgn{}, \Cref{alg:sgn}) which boasts a global \textbf{$\okd$} convex convergence rate, matching dependence on iteration number $k$ of full-dimensional \rnewton{} methods. 
Surprisingly, sketching on $1$-dimensional subspaces with an iteration cost of $\cO(1)$ \citep{RSN} engenders $\okd$ global convex rate. 

As a cherry on top, \sgn{} offers additional benefits in the form of a local linear convergence rate independent of the condition number and a global linear rate under the assumption of relative convexity (\Cref{def:rel}).
We summarize the contributions below and in Tables \ref{tab:three_ways}, \ref{tab:detailed}.


\begin{itemize} 
\item \textbf{One connects all:} We present \sgn{} through three orthogonal viewpoints: the \sap{} method, the subspace \newton{} method, and the subspace \rnewton{} method. Compared to the established algorithms, \sgn{} can be viewed as \aicn{} \citep{hanzely2022damped} operating in subspaces, \sscn{} \citep{hanzely2020stochastic} operating in local norms, or \rsn{} \citep{RSN} with the new stepsize schedule (\Cref{tab:three_ways}).

Designing an algorithm that preserves all desired properties was a significant challenge. It required a multitude of insights from the literature and an extremely careful analysis technique. Even the smallest misalignment can cause significantly slower convergence rate guarantees (see \Cref{sec:sscn_okd}). 
\item
\textbf{Fast global convergence:}
\sgn{} is the \textbf{first low-rank method} that minimizes convex functions with $\okd$ global rate. This matches the state-of-the-art rates of full-rank Newton-like methods. Other \sap{} methods (e.g., \sscn{} and \rsn{}), have slower $\mathcal O \left( k^{-1} \right)$ rate (\Cref{tab:setup_comparison}).

\item \textbf{Cheap iterations:} \sgn{} uses $\tau$--dimensional updates. Per-iteration cost is $\mathcal O \left( d\tau^2 \right)$ and in the $\tau=1$ case it is even $\mathcal O \left(1\right)$ \citep{RSN}. Conversely, full-rank Newton-like methods have cost proportional to $d^3$  and $d \gg \tau$.

\item \textbf{Linear local rate:} \sgn{} has local linear rate $\cO \left( \frac d \tau \log \frac 1 \varepsilon \right)$ (\Cref{th:local_linear}) dependent only on the ranks of the sketching matrices. This improves over the condition-dependent linear rate of \rsn{} or any rate of first-order methods.

\item \textbf{Global linear rate:}  Under $\murel$--relative convexity, \sgn{} with a different smoothness constant achieves global linear rate $\cO \left( \frac {\Lalg} {\rho \murel } \log \frac 1 \varepsilon \right)$ to a neighborhood of the solution (\Cref{th:global_linear})\footnote{$\rho$ is condition number of a projection matrix \eqref{eq:rho}, and $\Lalg$ is upperbound on semi-strong self-concordance (\Cref{def:semi-self-concordance}) affecting the stepsize \eqref{eq:sgn_alpha}.}. 

\item \textbf{Geometry and interpretability:}
Update of \sgn{} uses well-understood projections\footnote{\citet{gower2020variance} describes six equivalent viewpoints.} of Newton method with stepsize schedule \aicn{}. Moreover, those stochastic projections are affine-invariant and in expectation preserve direction \eqref{as:projection_direction}. 
On the other hand, implicit steps of regularized Newton methods including \sscn{} lack geometric interpretability.

\item \textbf{Algorithm simplicity:}
\sgn{} is affine-invariant and independent of the choice of the basis, simplifying parameter tuning.
Update rule \eqref{eq:sgn_aicn} is simple and explicit. Conversely, most fast globally convergent Newton-like algorithms require an extra subproblem solver in each iteration.

\item \textbf{Analysis:}
The analysis of \sgn{} is simple, all steps have clear geometric interpretation. On the other hand, the analysis of \sscn{} \citep{hanzely2020stochastic} is complicated as it measures distances in both $l_2$ norms and local norms. Using $l_2$ norms removes geometric interpretability and leads to worse constants, which ultimately causes the slower $\mathcal O \left( k^{-1} \right)$ convergence rate.
\end{itemize}

\subsection{Objective}
In this chapter, we consider the optimization objective
\begin{equation} \label{eq:objective}
    \min_{x \in \Rd} f(x),
\end{equation}
where function $f:\R^d \to \R$ is convex, twice differentiable with positive definite Hessian, bounded from below, and potentially ill-conditioned. The number of features $d$ is large. Denote the solution $x^* \eqdef \argmin_{x\in\Rd} f(x)$ and $ \fopt\eqdef f(x^*)$. We solve objective \eqref{eq:objective} using subspace methods, which use a sparse update
\begin{equation} \label{eq:subspace_generic}
x_+ = x + \s h,
\end{equation}
where $\s \in \mathbb{R}^{d \times \tau(\s)}, \s \sim \cD$ is a thin matrix and $h \in \mathbb{R}^{\tau(\s)}$. We denote gradients and Hessians along the subspace spanned by columns of $\s$ as {$\gS (x) \eqdef \st \g (x)$} and {$\hS (x) \eqdef \st \h(x) \s$}.
\citet{RSN} shows that $\s ^\top \h(x) \s$ can be obtained by twice differentiating function $\lambda \to f(x + \s \lambda)$ at cost of $\tau(\s)$ times of evaluating function $f(x+\s \lambda)$ by using reverse accumulation techniques \citep{christianson1992automatic, gower2012new}. For $\s\sim \cD$ with a constant rank $\tau\eqdef\tau(\s)$, it requires $\cO \left(d\tau^2\right)$ arithmetic operations and in the case $\tau=1$, the cost can be reduced to even $\cO \left(1\right)$ \citep{RSN}.

\subsection{Affine-invarant geometry}
 We can define norms based on a symmetric positive definite matrix $\mathbf H \in \mathbb{R}^{d \times d}$. For any  $x,g \in \Rd,$
\begin{equation}
    \norm{x}_ {\mathbf H} \eqdef  \la \mathbf Hx,x\ra^{1/2}, \qquad \norm{g}_{\mathbf H}^{\ast}\eqdef\la g,\mathbf H^{-1}g\ra^{1/2}.
\end{equation}
As a special case $\mathbf H= \mI$, we get $l_2$ norm $\norm{x}_\mI = \la x,x\ra^{1/2}$. We will be setting $\mathbf H$ to be a Hessian at local point, $\mathbf H = \nabla^2 f(x)$, with the shorthand notation for $g,h \in \Rd:$
\begin{equation} 
{\norm{h}_x \eqdef \la \nabla^2 f(x) h,h\ra^{1/2}, } \, {\norm{g}_{x}^{\ast} \eqdef \la g,\nabla^2 f(x)^{-1}g\ra^{1/2}.}
\end{equation}
The main advantage of the local Hessian norm $\normM h x$ is its affine-invariance, as the affine transformation $f(x) \rightarrow \phi(y) \eqdef f(\mathbf Ay),$ and  $x\rightarrow \mathbf A ^{-1} y$ imply 
\begin{align*}
    \normsM z {\nabla^2 \phi(y)} 
    &=\la\nabla^2 \phi(y)z,z \ra 
    = \la \mathbf A^\top \h(\mathbf A y) \mathbf Az,z\ra =
    \la \h(x)h,h\ra
    =\normsM h{\nabla^2 f(x)}.  
\end{align*}
On the other hand, induced norm $\norm{h}_\mI$ is not, because
\begin{align*}
    \normsM z \mI 
    =\la z,z \ra = \la \mathbf A^{-1}h, \mathbf A^{-1}h\ra
    = \norm{ \mathbf A^{-1} h}^2_\mI.  
\end{align*}
With respect to geometry around point $x$, the more natural norm is the local Hessian norm, $\norm{h}_{\nabla f(x)}$. Affine-invariance implies that its level sets $\lb y\in \R^d\,|\,\norm{ y-x }_x^2 \leq c\rb$ are balls centered around $x$ (all directions have the same scaling). In comparison, the scaling of the $l_2$ norm is dependent on the eigenvalues of the Hessian. In terms of convergence, one direction in $l_2$ can significantly dominate others and slow down an algorithm.
As we are restricting iteration steps to the subspaces, we use shorthand notation {$\normMS h x \eqdef \normM h {\hS(x)}.$}

\begin{table}[t]
    \centering
    \renewcommand\arraystretch{1}
    \setlength\tabcolsep{0.5pt} 
    \begin{threeparttable}[b]
        {
        \scriptsize
            \caption[Three approaches for second-order global minimalization]{
            Three approaches for second-order global minimization. We denote $x^k\in \Rd$ iterates, $\sk \sim \cD$ distribution of sketches of rank $\tau \ll d$, $\als k$ stepsizes.
            We report complexities for matrix inversions implemented naively. }
            \label{tab:three_ways}
            \centering 
            \begin{tabular}{?c?c|c|c?}
            \bottomrule
             \makecell{\textbf{Orthogonal}\\ \textbf{lines of} {\textbf{work}}} 
             & \makecell{\textbf{Sketch-and-project} \\ \citep{gower2015randomized} \\ (various update rules)} 
             & \makecell{\textbf{Damped Newton methods} \\\citep{nesterov1994interior}, \\ \citep{KSJ-Newton2018} } 
             & \makecell{\textbf{Globally Regularized Newton methods} \\ \citep{nesterov2006cubic}, \\\citep{polyak2009regularized},  \citep{mishchenko2021regularized}, \\ \citep{doikov2021optimization} }\\
             \hline
             \makecell{\textbf{Update:}\\ $x^{k+1} - x^k =$ \\ \, } 
             & \makecell{$=\als k \pk k \left( \text{update}(x^k) \right)$}
             & $=\als k [\h(x^k)]^\dagger \g(x^k)$
             & \makecell{$=\argmin_{h\in \Rd} T(x^k, h)$, for $T(x,h)$\\ $ \eqdef \la \g(x), h \ra + \frac 12 \normsM {h} x + \frac {L_2}6 \normM {h} {2} ^3$}\\                         
             \hline
             \makecell{\textbf{Characteristics}}
             &\makecell[l]{
             \color{mydarkgreen}
             + cheap, low-rank updates\\
             \color{mydarkgreen}
             + global linear convergence\\
             \color{mydarkred}
             -- local quadratic rate unachievable\\
             }
             & \makecell[l]{
             \color{mydarkgreen}
             + affine-invariant geometry\\
             \color{mydarkred}
             -- iteration cost $\cO\left(d^3 \right)$\\
             Constant $\als k$:\\
             \color{mydarkgreen}
             \qquad + global linear convergence\\
             Increasing $\als k \nearrow 1$:\\
             \color{mydarkgreen}
             \qquad + local quadratic rate\\
             }             
             & \makecell[l]{
             \color{mydarkgreen}
             + global convex rate $\okd$\\
             \color{mydarkgreen}
             + local quadratic rate\\
             \color{mydarkred}
             -- implicit updates\tn{1}\\
             \color{mydarkred}
             -- iteration cost $\cO\left(d^3 \log \frac 1 \varepsilon \right)$\tn{1}\\
             }\\
             \toprule
             
             \bottomrule
             \makecell{\textbf{Combinations}\\ \textbf{+ retained benefits}} 
             & \makecell{\textbf{Sketch-and-project}} 
             & \makecell{\textbf{Damped Newton methods} } 
             & \makecell{\textbf{Globally regularized Newton methods} }\\
             \Xhline{2\arrayrulewidth}
             \makecell{\rsn{} \\ \citep{RSN} \\ \Cref{alg:rsn}}
             & \makecell{ \cmark\\ \makecell[l]{
             \color{mydarkgreen}
             + iter. cost $\cO\left(d \tau^2 \right)$\\ 
             \color{mydarkgreen}
             + iter. cost $\cO\left(1 \right)$ if $\tau=1$\\ 
             }} & \makecell{\cmark\\
             \color{mydarkgreen}
             + global linear rate 
             \\ }& \xmark\\
             \hline
             \makecell{\sscn{}\\ \citep{hanzely2020stochastic} \\ \Cref{alg:sscn}}
             &\makecell{\cmark\\ \makecell[l]{
             \color{mydarkgreen}
             + iter. cost $\cO\left(d\tau^2 +\tau^3 \log \frac 1 \varepsilon \right)$\\
             \color{mydarkgreen}
             + iter. cost $\cO\left(\log \frac 1 \varepsilon \right)$ if $\tau=1$\\
             \color{mydarkgreen}
             + local linear rate $\cO\left( \frac d \tau \log \frac 1 \varepsilon\right)$}} & \xmark & \makecell{ \cmark \\
             \color{mydarkgreen}
             + global convex rate $\okd$ \\}\\
             \hline
             \makecell{\aicn{}\\ \citep{hanzely2022damped} \\ \Cref{alg:aicn}}
             & \xmark & \makecell{\cmark\\ \makecell[l]{
             \color{mydarkgreen}
             + affine-invariant geometry\\                          
             \color{mydarkred}
             -- no proof of global linear rate\tn{3}\\}} & \makecell{\cmark\\ \makecell[l]{
             \color{mydarkgreen}
             + global convex rate $\okd$\\
             \color{mydarkgreen}
             + local quadratic rate\\
             \color{mydarkgreen}
             + iteration cost $\cO\left(d^3 \right)$\\
             \color{mydarkgreen}
             + simple, explicit updates\\
             }}\\
             \hline
             \makecell{\textbf{\sgn{}}\\ \textbf{(this work)} \\ \Cref{alg:sgn}}
             & \makecell{ \cmark \\ \makecell[l]{
             \color{mydarkgreen}
             + iter. cost $\cO\left(d\tau^2 \right)$\\
             \color{mydarkgreen}
             + iter. cost $\cO\left(1 \right)$ if $\tau=1$\\
             \color{mydarkgreen}
             + local lin. rate $\cO\left( \frac d \tau \log \frac 1 \varepsilon\right)$\tn{2} \hspace{1.5mm}  \\
             }}
             & \makecell{\cmark\\ \makecell[l]{
             \color{mydarkgreen}
             + affine-invariant geometry\\
             \color{mydarkgreen}
             + global linear rate 
             \\}} 
             & \makecell{\cmark\\ \makecell[l]{
             \color{mydarkgreen}
             + global convex rate $\okd$\\
             \color{mydarkgreen}
             + simple, explicit updates\\
             \\}} \\
             \toprule
             
             \bottomrule
             \makecell{\textbf{Three views} \textbf{of} \\\color{mydarkgreen} \textbf{\sgn{}}} & \makecell{ \textbf{Sketch-and-project of}\\\textbf{\color{mydarkgreen} damped Newton method} } & \makecell{\textbf{Damped Newton method} \\ \textbf{\color{mydarkgreen}in sketched subspaces} } & \makecell{\textbf{{Affine-Invariant} Newton method} \\ \textbf{\color{mydarkgreen} in sketched subspaces}}\\
             \Xhline{2\arrayrulewidth}
             \makecell{\textbf{Update}\\ $x^{k+1} - x^k =$ \\ \,} 
             & $=\als k \pk k \color{mydarkgreen} [\h(x^k)]^\dagger \g(x^k)$ 
             & $={\color{mydarkgreen}\als k} {\color{mydarkgreen} \sk } [\nabla_{\color{mydarkgreen}\sk}f(x^k)]^\dagger \nabla_{\color{mydarkgreen} \sk}f(x^k)$
             & \makecell{$={\color{mydarkgreen}\sk} \argmin_{h\in \Rd} T_{\color{mydarkgreen}\sk}(x^k, h)$, \\ for $T_{\color{mydarkgreen} \s} (x,h) \eqdef \la \g(x), {\color{mydarkgreen}\s} h \ra +$\\ \qquad \, \qquad $+ \frac 12 \normsM {{\color{mydarkgreen}\s} h} x + \frac {\Ls}6 \Vert {{\color{mydarkgreen}\s} h} \Vert ^3 _ {\color{mydarkgreen}x} $} \\          
             \toprule
            \end{tabular}
        }
        \begin{tablenotes}
        	{\scriptsize
                \item [\color{red}(1)]Works \citep{polyak2009regularized, mishchenko2021regularized, doikov2021optimization} present algorithms with exact updates and cheaper per-iteration cost, but for the cost of some tradeoffs. In particular, \citet{polyak2009regularized} has slow $\cO(k^{-1/4})$ global rate in convex regime. \citet{mishchenko2021regularized} and \citet{doikov2021optimization} have superlinear, but not quadratic local convergence.
                \item [\color{red}(2)] The rate is independent of the problem conditioning and faster than any first-order method. 
                \item [\color{red}(3)] \citet{hanzely2022damped} didn't show global linear rate of \aicn{}. However, it follows from our Th. \ref{th:global_linear}, \ref{th:local_linear} for deterministic sketches $\sk \equiv \mI$.
        	}
        \end{tablenotes}
    \end{threeparttable}
\end{table}

\section{Algorithm}
\subsection{Three faces of the algorithm}
Our algorithm combines the best of three worlds (\Cref{tab:three_ways}) and we can write it in three different ways.

\begin{theorem} 
\label{th:three}
    If $\g(x^k) \in \Range{\h(x^k)}$\footnote{$\Range{\mathcal A}$ denotes column space of the matrix $\mathcal A$.}, then the update rules are equivalent:
    \begin{align}
    \text{\Rnewton{}:} \qquad &x^{k+1} = x^k + \sk\argmin_{h\in \Rd} \modelSk{x^k, h}, \label{eq:sgn_reg}\\
     \text{\Dnewton{}:} \qquad &x^{k+1} = x^k - \als k \sk [\hSk(x^k)]^\dagger \gSk(x^k), \label{eq:sgn_aicn}\\
     \text{\Sap{}:} \qquad &x^{k+1} = x^k - \als k \pk k [\h(x^k)]^\dagger \g(x^k), \label{eq:sgn_sap}
    \end{align}
    where $\pk k$ is a projection matrix onto $\Range{\sk}$ with respect to norm $\normM \cdot {x_k}$ (defined in eq. \eqref{eq:px}),
    \begin{align}
        \modelS {x,h}
        &\eqdef f(x) + \la \g(x), \s h \ra + \frac 12 \normsM {\s h} x + \frac {\Lalg}6 \normM {\s h} x ^3 \nonumber \\
        &= f(x) + \la \gS(x), h \ra + \frac 12 \normsMS {h} x + \frac {\Lalg}6 \normMS {h} x ^3, \label{eq:sgn_Ts} \\
        &\hspace{-3mm}\als k \eqdef \frac{-1 +\sqrt{1+2 \Lalg \normMSd{\gS(x^k)} {x^k} }}{\Lalg \normMSd {\gS(x^k)} {x^k} }. \label{eq:sgn_alpha}
    \end{align}
    We call this algorithm Sketchy Global Newton, \sgn{}, it is formalized as \Cref{alg:sgn}.
\end{theorem}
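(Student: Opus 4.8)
The plan is to prove the two equivalences \eqref{eq:sgn_reg}~$\Leftrightarrow$~\eqref{eq:sgn_aicn} and \eqref{eq:sgn_aicn}~$\Leftrightarrow$~\eqref{eq:sgn_sap} in turn, reducing everything to the subspace $\Range{\sk}$. Fix an iteration $k$ and abbreviate $\mH \eqdef \hSk(x^k)$ and $g \eqdef \gSk(x^k)$. Since $\h(x^k)\succ 0$ we have $\mH\succeq 0$ with $\Null{\mH}=\Null{\sk}$, so $\sk h$ is insensitive to perturbing $h$ along $\Null{\mH}$ and the minimizer in \eqref{eq:sgn_reg} is well defined up to this harmless ambiguity. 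The hypothesis $\g(x^k)\in\Range{\h(x^k)}$ gives $g\in\Range{\mH}$; moreover $h\mapsto\modelSk{x^k,h}$ from \eqref{eq:sgn_Ts} is convex (its cubic part $\tfrac{\Lalg}{6}\normM{h}{\mH}^3$ is an increasing convex function of the seminorm $\normM{h}{\mH}$), so its minimizer is characterized by stationarity. Differentiating — the minimizer is nonzero whenever $g\neq 0$, so the cubic term is smooth there — yields
\[
g + \mH h + \tfrac{\Lalg}{2}\normM{h}{\mH}\,\mH h = 0,\qquad\text{equivalently}\qquad \Bigl(1+\tfrac{\Lalg}{2}\normM{h}{\mH}\Bigr)\mH h = -g,
\]
which is solved by $h = -\gamma\,\mH^{\dagger}g$ with the scalar $\gamma \eqdef \bigl(1+\tfrac{\Lalg}{2}r\bigr)^{-1}$ and $r \eqdef \normM{h}{\mH}$.

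Next I would pin down $r$, and hence $\gamma$. Taking the $\mH$-norm of $h=-\gamma\,\mH^{\dagger}g$ and using $\normM{\mH^{\dagger}g}{\mH}=\normMSdk{g}{x^k}$, which holds because $g\in\Range{\mH}$, gives $r=\gamma\,\normMSdk{g}{x^k}$, i.e.\ the scalar quadratic $\tfrac{\Lalg}{2}r^2+r-\normMSdk{g}{x^k}=0$ whose nonnegative root is $r=\bigl(-1+\sqrt{1+2\Lalg\,\normMSdk{g}{x^k}}\,\bigr)/\Lalg$. Substituting back, $1+\tfrac{\Lalg}{2}r=\tfrac12\bigl(1+\sqrt{1+2\Lalg\,\normMSdk{g}{x^k}}\,\bigr)$, and rationalizing $\gamma = 2/\bigl(1+\sqrt{1+2\Lalg\,\normMSdk{g}{x^k}}\,\bigr)$ shows $\gamma=\als k$ from \eqref{eq:sgn_alpha}. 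If $g=0$, the model minimizer and the step are both zero while $\als k\to 1$ by continuity, so in all cases $x^k+\sk\argmin_{h}\modelSk{x^k,h}=x^k-\als k\,\sk\,\mH^{\dagger}g$, establishing \eqref{eq:sgn_reg}~$\Leftrightarrow$~\eqref{eq:sgn_aicn}.

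For \eqref{eq:sgn_aicn}~$\Leftrightarrow$~\eqref{eq:sgn_sap} I would use the closed form of the $\normM{\cdot}{x^k}$-orthogonal projection onto $\Range{\sk}$ from \eqref{eq:px}, namely $\pk k=\sk\bigl(\sk^{\top}\h(x^k)\sk\bigr)^{\dagger}\sk^{\top}\h(x^k)=\sk\,\mH^{\dagger}\sk^{\top}\h(x^k)$; one checks directly that it is idempotent, $\h(x^k)$-self-adjoint, and has range $\Range{\sk}$, which pins it down as that projection. Then, using $\h(x^k)[\h(x^k)]^{\dagger}\g(x^k)=\g(x^k)$ (again from $\g(x^k)\in\Range{\h(x^k)}$),
\[
\pk k[\h(x^k)]^{\dagger}\g(x^k)=\sk\,\mH^{\dagger}\sk^{\top}\h(x^k)[\h(x^k)]^{\dagger}\g(x^k)=\sk\,\mH^{\dagger}\sk^{\top}\g(x^k)=\sk\,[\hSk(x^k)]^{\dagger}\gSk(x^k),
\]
so multiplying by $-\als k$ turns \eqref{eq:sgn_sap} into \eqref{eq:sgn_aicn} and closes the chain.

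The convexity/stationarity step and the projection algebra are routine; the two places that demand care are (i) the pseudoinverse bookkeeping when $\sk$ is rank-deficient — the hypothesis $\g(x^k)\in\Range{\h(x^k)}$, equivalently $\gSk(x^k)\in\Range{\hSk(x^k)}$, is exactly what makes $\mH^{\dagger}g$, the identity $\normM{\mH^{\dagger}g}{\mH}=\normMSdk{g}{x^k}$, and the cancellation $\h(x^k)[\h(x^k)]^{\dagger}\g(x^k)=\g(x^k)$ legitimate — and (ii) the algebraic identity rewriting $\gamma=(1+\tfrac{\Lalg}{2}r)^{-1}$ as the square-root expression \eqref{eq:sgn_alpha}, together with the degenerate limit $\gSk(x^k)=0$. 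I expect (i) to be the main subtlety; everything else is a two-line computation.
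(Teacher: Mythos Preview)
Your proposal is correct and follows essentially the same route as the paper: the paper also first shows \eqref{eq:sgn_aicn}$\Leftrightarrow$\eqref{eq:sgn_sap} via the identity $\h(x^k)[\h(x^k)]^{\dagger}\g(x^k)=\g(x^k)$ and the expansion of $\pk k$, and then differentiates $\modelSk{x^k,h}$ to obtain $h^*=-\bigl(1+\tfrac{\Lalg}{2}\normM{h^*}{x^k,\sk}\bigr)^{-1}[\hSk(x^k)]^{\dagger}\gSk(x^k)$, identifying the stepsize with the positive root of $1-\als k-\tfrac{\Lalg}{2}\als k^2\normMSdk{\gSk(x^k)}{x^k}=0$. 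Your quadratic in $r=\normM{h}{\mH}$ is the same equation after the substitution $r=\als k\normMSdk{\gSk(x^k)}{x^k}$, and your treatment of the degenerate cases ($\sk$ rank-deficient, $g=0$) is a bit more explicit than the paper's but not materially different.
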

Notice that $\als k \in (0,1]$ and limit cases 
$\als k \xrightarrow{\Lalg \normMSdk{\gSk(x^k)} {x^k} \rightarrow 0} 1$ 
and
$\als k \xrightarrow{\Lalg \normMSdk{\gSk(x^k)} {x^k} \rightarrow \infty} 0.$
For \sgn{}, we can easily transition between gradients and model differences 
$h^{k} \eqdef x^{k+1}-x^k$ by identities
\begin{align} \label{eq:transition-primdual}
    h^k &\stackrel{\eqref{eq:sgn_aicn}}= -\als k \sk [\hSk(x^k)]^\dagger \gSk(x^k) \qquad \text{and} \qquad
    \normM {h^k}{x^k} =\als k \normMSdk {\gSk(x^k)}{x^k}.
\end{align}

\subsection{Geometry of sketches}
We use a projection matrix on subspaces $\s$ with respect to local norms $\normM \cdot x$. Denote
\begin{equation} \label{eq:px}
    \p \eqdef \s \left( \st \h (x) \s \right)^\dagger \st \h(x).
\end{equation}

\begin{lemma} \citep{gower2020variance} \label{le:sketch_equiv}
    Matrix $\p$ is a projection onto $\Range{\s}$ with respect to $\normM \cdot x$.    
\end{lemma}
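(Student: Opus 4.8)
The plan is to reduce the weighted projection to an ordinary Euclidean orthogonal projection by a symmetric change of variables, and then read off everything from the four Moore--Penrose equations. Write $\mH \eqdef \h(x) \succ 0$, and recall that ``projection onto $\Range{\s}$ with respect to $\normM{\cdot}{x}$'' means the map sending $v$ to the unique minimizer of $\normM{v-y}{x}$ over $y\in\Range{\s}$; equivalently, the linear operator $\p$ that is (i) idempotent, (ii) self-adjoint for the inner product $\ipM{u}{v}{\mH}\eqdef\ip{\mH u}{v}$, and (iii) has range exactly $\Range{\s}$. I would establish (i)--(iii) and then spell out the nearest-point characterization.

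\textbf{Step 1 (change of variables).} Put $\mathbf B \eqdef \mH^{1/2}\s$, so that $\mathbf B^\top\mathbf B = \st\mH\s$ and $\st\mH^{1/2} = \mathbf B^\top$. Using the standard pseudoinverse identity $\mathbf B^\dagger = (\mathbf B^\top\mathbf B)^\dagger\mathbf B^\top$, a one-line computation gives
\[
\mH^{1/2}\,\p\,\mH^{-1/2} \;=\; \mH^{1/2}\s(\st\mH\s)^\dagger\st\mH^{1/2} \;=\; \mathbf B(\mathbf B^\top\mathbf B)^\dagger\mathbf B^\top \;=\; \mathbf B\mathbf B^\dagger,
\]
i.e. $\p$ is conjugate, through $\mH^{1/2}$, to the ordinary Euclidean orthogonal projector $\mathbf B\mathbf B^\dagger$ onto $\Range{\mathbf B}$.

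\textbf{Step 2 (transport the three properties).} From $(\mathbf B\mathbf B^\dagger)^2=\mathbf B\mathbf B^\dagger$ and $(\mathbf B\mathbf B^\dagger)^\top=\mathbf B\mathbf B^\dagger$ we get idempotency, $\p^2=\mH^{-1/2}(\mathbf B\mathbf B^\dagger)^2\mH^{1/2}=\p$, and $\mH$-self-adjointness, since $\mH\p=\mH^{1/2}(\mathbf B\mathbf B^\dagger)\mH^{1/2}$ is symmetric, hence $\ipM{\p u}{v}{\mH}=\ipM{u}{\p v}{\mH}$ for all $u,v$. For the range, $\Range{\p}=\mH^{-1/2}\Range{\mathbf B\mathbf B^\dagger}=\mH^{-1/2}\Range{\mathbf B}=\mH^{-1/2}\Range{\mH^{1/2}\s}=\Range{\s}$, and $\mathbf B\mathbf B^\dagger\mathbf B=\mathbf B$ yields $\p\s=\mH^{-1/2}\mathbf B\mathbf B^\dagger\mathbf B=\mH^{-1/2}\mathbf B=\s$, so $\p$ restricts to the identity on $\Range{\s}$.

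\textbf{Step 3 (nearest-point characterization).} For any $v$ and any $y=\s w\in\Range{\s}$, $\mH$-self-adjointness together with $\p y=y$ gives $\ipM{v-\p v}{y}{\mH}=\ipM{v}{y}{\mH}-\ipM{\p v}{y}{\mH}=\ipM{v}{y}{\mH}-\ipM{v}{\p y}{\mH}=0$, so $v-\p v\perp_{\mH}\Range{\s}$. Since $\p v-y\in\Range{\s}$, expanding $\normsM{v-y}{x}=\normsM{(v-\p v)+(\p v-y)}{x}$ the cross term vanishes and we obtain the Pythagorean identity $\normsM{v-y}{x}=\normsM{v-\p v}{x}+\normsM{\p v-y}{x}$, which shows $\p v$ is the unique $\normM{\cdot}{x}$-closest point of $\Range{\s}$ to $v$. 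This is exactly the claimed statement.

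\textbf{Main obstacle.} The only point requiring care is the rank-deficient case, where the columns of $\s$ (equivalently of $\mathbf B$) are dependent and $\st\mH\s$ is singular: one must work with the genuine Moore--Penrose pseudoinverse and invoke $\mathbf B^\dagger=(\mathbf B^\top\mathbf B)^\dagger\mathbf B^\top$ and $\mathbf B\mathbf B^\dagger\mathbf B=\mathbf B$ rather than a naive inverse; everything else is routine bookkeeping. (As an alternative to the $\mH^{1/2}$-reduction, one could verify (i)--(iii) directly from the four defining equations of $(\st\mH\s)^\dagger$, but the conjugation argument is cleaner.)
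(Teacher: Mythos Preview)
Your proof is correct and more thorough than the paper's own argument, but it takes a genuinely different route. The paper gives a two-line verification: it applies the single Moore--Penrose identity $\mathbf M^{\dagger}\mathbf M\mathbf M^{\dagger}=\mathbf M^{\dagger}$ with $\mathbf M=\st\h(x)\s$ directly to conclude $\ipM{\p y}{\p z}{\h(x)}=\ipM{\p y}{z}{\h(x)}$ for all $y,z$, and declares this the projection property. You instead conjugate by $\mH^{1/2}$ to reduce $\p$ to the standard Euclidean projector $\mathbf B\mathbf B^{\dagger}$ via the identity $\mathbf B^{\dagger}=(\mathbf B^{\top}\mathbf B)^{\dagger}\mathbf B^{\top}$, and then transport idempotency, $\mH$-self-adjointness, the range identification, and the nearest-point characterization back through the conjugation.

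What each buys: the paper's approach is shorter and avoids introducing $\mH^{1/2}$, but is arguably a sketch --- it does not explicitly check that $\Range{\p}=\Range{\s}$ or spell out idempotency. Your conjugation argument is longer but self-contained: it cleanly separates the weighted geometry from the linear algebra, verifies all three defining properties and the variational characterization, and (as you note) handles the rank-deficient case transparently through the Moore--Penrose equations for $\mathbf B$. Either argument is perfectly acceptable here.
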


We aim \sgn{} to preserve Newton's direction in expectation. From \eqref{eq:sgn_sap} we can see that this holds as long as $\s \sim \cD$ is such that $\p$ preserves direction in expectation.
\begin{assumption} \label{as:projection_direction}
    Distribution $\cD$ is chosen so that there exists $\tau>0$, such that
    \begin{equation}
        \mathbb E_{\s \sim \cD} \left[ \p \right] = \afrac \tau d \mI.
    \end{equation}
\end{assumption}
\begin{lemma} \label{le:tau_exp}
Assumption~\ref{as:projection_direction} implies $\mathbb E_{\s \sim \cD} \left[ \tau(\s) \right] = \tau$.
\end{lemma}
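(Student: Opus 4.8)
The plan is to take the trace of the identity in Assumption~\ref{as:projection_direction} and exploit the fact that an idempotent matrix has trace equal to its rank. Concretely, I would proceed in three short steps.

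First, by \Cref{le:sketch_equiv} the matrix $\p = \s \left( \st \h(x) \s \right)^\dagger \st \h(x)$ is a projection onto $\Range{\s}$ with respect to $\normM\cdot x$; in particular it is idempotent, $\p^2 = \p$. For any idempotent matrix the trace equals the rank, so $\Tr(\p) = \dim \Range{\s} = \tau(\s)$ (interpreting $\tau(\s)$ as the rank of $\s$, which matches the number of columns under the standing full-column-rank convention for the sketches). Note this holds pointwise for every realization $\s \sim \cD$, and it does not require the projection to be orthogonal — only idempotency is used.

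Second, I would take the trace of both sides of the identity $\mathbb E_{\s\sim\cD}\left[\p\right] = \afrac\tau d \mI$. Since the state space is finite-dimensional, trace and expectation commute (trace is a finite linear combination of the entries), so
\begin{equation*}
    \mathbb E_{\s\sim\cD}\left[\tau(\s)\right]
    = \mathbb E_{\s\sim\cD}\left[\Tr(\p)\right]
    = \Tr\!\left(\mathbb E_{\s\sim\cD}\left[\p\right]\right)
    = \Tr\!\left(\afrac\tau d \mI\right)
    = \afrac\tau d \cdot d
    = \tau .
\end{equation*}
This is exactly the claim.

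I do not expect a genuine obstacle here; the only point requiring a word of care is the identity $\Tr(\p)=\tau(\s)$, i.e.\ that the trace of a (possibly oblique) projector equals the dimension of its range. This is standard — diagonalize $\p$, whose eigenvalues are all $0$ or $1$ with the eigenvalue $1$ having multiplicity $\dim\Range{\s}$ — and it is robust to whether or not $\st\h(x)\s$ is invertible, since the pseudoinverse still yields a projector onto $\Range{\s}$. The interchange of trace and expectation is immediate in finite dimension provided $\mathbb E[\p]$ exists, which is implicit in the statement of Assumption~\ref{as:projection_direction}.
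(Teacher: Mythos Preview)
Your proposal is correct and follows essentially the same approach as the paper: take the trace of both sides of Assumption~\ref{as:projection_direction} and use $\Tr(\p)=\tau(\s)$. The only cosmetic difference is that the paper obtains $\Tr(\p)=\tau(\s)$ via the cyclic trace identity $\Tr\!\big(\st\h(x)\s(\st\h(x)\s)^\dagger\big)=\Tr(\p)$, whereas you invoke idempotency of the projector directly; both justifications are equivalent here.
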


Assumption~\ref{as:projection_direction} is formulated in the local norms, so it might seem restrictive. To remediate that, in the experiment section, we demonstrate that in practice it can be omitted altogether. Moreover, in \Cref{ssec:sketch_distribution} we argue that such distribution can be constructed from simpler sketch distributions.  

Projection matrix $\p$ from \eqref{eq:px} has contractive properties, as stated by the following lemma.
\begin{lemma} \label{le:projection_contract}
    Projection matrix $\p$ satisfies for any $g,h \in \Rd, g \in \Range {\h(x)}$ inequalities
    \begin{align}
        \hspace{-1.8mm} \normsM {\p h} x & \leq  \normsM {\p h} x + \normsM {(\mI - \p ) h} x = \normsM h x, \label{eq:proj_contr}\\
        \hspace{-1.8mm} \E{\normsM{\p h} x } 
        & =  h ^\top \h(x) \E \p h
        \stackrel{As.\ref{as:projection_direction}} = \frac \tau d \normsM h x, \label{eq:proj_h}\\
        \hspace{-1.8mm} \E{\normsMd{\p^\top g} x }
        & =  g ^\top \E \p [\h(x)]^\dagger g
        \stackrel{As.\ref{as:projection_direction}} = \frac \tau d \normsMd g x, \label{eq:proj_g}\\
        \hspace{-1.8mm} \E{\normM{\p h} x ^3} & \stackrel {\eqref{eq:proj_contr}}\leq \E{ \normM h x \cdot \normsM{\p h} x } 
        \stackrel{\eqref{eq:proj_h}}= \frac \tau d \normM h x ^3.
    \end{align}
\end{lemma}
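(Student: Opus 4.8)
The plan is to reduce all four statements to two algebraic facts about the matrix $\p$ of \eqref{eq:px}: that it is idempotent, $\p^2=\p$, and that it is self-adjoint in the inner product $\ipM u v {\h(x)} \eqdef u^\top\h(x)v$, i.e.\ $\p^\top\h(x)=\h(x)\p$. Both are immediate from the Moore--Penrose identities $A^\dagger A A^\dagger = A^\dagger$ and $(A^\dagger)^\top=(A^\top)^\dagger$ applied to the symmetric matrix $A=\st\h(x)\s$; equivalently, they are precisely what \Cref{le:sketch_equiv} asserts, namely that $\p$ is the $\normM\cdot x$-orthogonal projector onto $\Range\s$. Since $\h(x)\succ 0$ we have $\Range{\h(x)}=\Rd$, so the hypothesis $g\in\Range{\h(x)}$ is automatic here (it is stated only for later reuse) and I treat $[\h(x)]^\dagger=[\h(x)]^{-1}$ throughout.

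For \eqref{eq:proj_contr} I would write $h=\p h+(\mI-\p)h$ and expand $\normsM h x$; the cross term equals $h^\top\h(x)\p(\mI-\p)h=h^\top\h(x)(\p-\p^2)h=0$ by the two facts above, which gives the Pythagorean identity $\normsM h x=\normsM{\p h}x+\normsM{(\mI-\p)h}x$, and dropping the nonnegative last summand yields the inequality. For \eqref{eq:proj_h}, self-adjointness followed by idempotency gives $\normsM{\p h}x=h^\top\p^\top\h(x)\p h=h^\top\h(x)\p^2h=h^\top\h(x)\p h$; as $h$ is deterministic and the randomness is only in $\s$, taking expectations and applying \Cref{as:projection_direction} produces $\E{\normsM{\p h}x}=h^\top\h(x)\,\E\p\,h=\tfrac\tau d\normsM h x$. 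For \eqref{eq:proj_g}, multiplying $\p^\top\h(x)=\h(x)\p$ by $[\h(x)]^{-1}$ on the left and on the right gives $[\h(x)]^{-1}\p^\top=\p[\h(x)]^{-1}$, whence $\normsMd{\p^\top g}x=g^\top\p[\h(x)]^{-1}\p^\top g=g^\top\p[\h(x)]^{-1}g$, and once more taking expectations together with \Cref{as:projection_direction} closes it.

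Finally, taking square roots in \eqref{eq:proj_contr} gives $\normM{\p h}x\le\normM h x$, so $\normM{\p h}x^3=\normM{\p h}x\cdot\normsM{\p h}x\le\normM h x\cdot\normsM{\p h}x$; taking expectations, pulling the deterministic factor $\normM h x$ out, and invoking \eqref{eq:proj_h} gives $\E{\normM{\p h}x^3}\le\tfrac\tau d\normM h x^3$, as claimed. The only step that needs any care is the pseudoinverse bookkeeping behind $\p^2=\p$ and $\p^\top\h(x)=\h(x)\p$ when $\s$ is not of full column rank; once those are established (or simply cited from \Cref{le:sketch_equiv}), each of the four displays is a single line of manipulation plus an application of \Cref{as:projection_direction}.
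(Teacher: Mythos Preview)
Your proposal is correct and follows essentially the same route as the paper. The only cosmetic difference is that you first isolate the two abstract facts $\p^2=\p$ and $\p^\top\h(x)=\h(x)\p$ (citing \Cref{le:sketch_equiv}) and then apply them, whereas the paper expands $\p$ via its definition \eqref{eq:px} and collapses the expression directly using the Moore--Penrose identity $\mathbf M^\dagger\mathbf M\mathbf M^\dagger=\mathbf M^\dagger$ with $\mathbf M=\st\h(x)\s$; these are the same computation, and the paper likewise uses $g\in\Range{\h(x)}$ only to insert $\h(x)[\h(x)]^\dagger$ in the dual-norm calculation, which is automatic under the standing positive-definiteness assumption you invoke.
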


\subsection{Affine-invariant assumptions}
To leverage the affine-invariance of the norms, we use an affine-invariant version of second-order smoothness called \emph{semi-strong self-concordance} \citep{hanzely2022damped}.
\begin{definition} \label{def:semi-self-concordance}
	A twice differentiable convex $f: \Rd \to \R$  is $\Lsemi-$semi-strongly self-concordant if
	\begin{equation}
		\norm{\nabla^2 f(y)-\nabla^2 f(x)}_{op} \leq \Lsemi \norm{y-x}_{x} , \quad \forall y,x \in \R^d,
	\end{equation} 
    where operator norm is, for given $x \in \R^d$, defined for any matrix $\mathbf H \in \R^{d \times d}$ as 
    \begin{equation} \label{eq:intro_matrix_operator_norm}
        \normM{\mathbf H} {op} \eqdef \sup_{v\in \R^d} \frac {\normMd {\mathbf H v} x}{\normM v x}.
    \end{equation}
\end{definition}
Semi-strong self-concordance differs from the standard second-order smoothness only in the norm in which the distance is measured. It follows from the Lipschitz smoothness and the strong convexity. 

In \Cref{sec:lin_glob} we relax the \Cref{def:semi-self-concordance} to the \emph{self-concordance} \citep{nesterov1994interior}, defined below. To allow tighter constants, we define it separately for each sketched subspace.
\begin{definition} \label{def:scs}
	A three times differentiable convex function $f:\Rd \to \R,$ is called $\Ls$--self-concordant in range of $\s$ if a finite constant $\Ls<\infty$ satisfies
	\begin{equation}
		\left \vert \nabla^3 f(x) [\s h]^3 \right \vert \leq \Ls \normM {\s h} x ^3, \qquad \forall x \in \Rd, h \in \R^{\tau(\s)} \setminus \left \{ 0 \right \},
	\end{equation}
	where 
	$\nabla^3 f(x)[h]^3 \eqdef \nabla^3 f(x) [h, h, h]$ is $3$--rd order directional derivative of $f$ at $x$ along $h \in \R^d$.
\end{definition}

We refer the reader for the more detailed comparison of smoothness assumptions to \Cref{sec:concordances}.

\subsection{One step decrease}
The self-concordance implies a standard decrease in terms of the gradient norms.
\begin{lemma} \label{le:one_step_dec}
    \sgn{} step \eqref{eq:sgn_aicn} decreases loss of $\Ls$--self-concordant function $f:\R^d\to\R$ as
    \begin{align}
        &f(x^k)-f(x^{k+1}) \geq \frac 12 \min \left \{ \left(\Lalg \normMSdk {\gSk(x^k)} {x^k} \right)^{-\frac 12} , \frac 12 \right\} \normsMSdk {\gSk(x^k)} {x^k}.\label{eq:one_step_dec} 
    \end{align}
\end{lemma}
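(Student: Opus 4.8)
The plan is to sandwich the one-step progress $f(x^k) - f(x^{k+1})$ between the self-concordant cubic majorant of $f$ restricted to the sketched subspace and the \emph{minimum} of that majorant, which by Theorem~\ref{th:three} is attained exactly at the \sgn{} iterate. Throughout I write $r \eqdef \normMSdk{\gSk(x^k)}{x^k}$ and $d \eqdef -[\hSk(x^k)]^\dagger\gSk(x^k)$, so that $\langle \gSk(x^k), d\rangle = -r^2$, $\langle \hSk(x^k)d, d\rangle = r^2$, and $x^{k+1} = x^k + \als k \sk d$.

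\emph{Step 1: cubic majorant.} I would first prove $f(x^{k+1}) \le \modelSk{x^k, \als k d}$. This is the subspace version of the one-step majorization used for \aicn{} in \citet{hanzely2022damped}: expanding $t \mapsto f(x^k + t \sk d)$ through its second derivative along the segment and bounding the variation of $\hSk$ with self-concordance (Definition~\ref{def:scs}, or the stronger Definition~\ref{def:semi-self-concordance}, in $\Range{\sk}$) yields $f(x^k + \sk h) \le f(x^k) + \langle \gSk(x^k), h\rangle + \tfrac12\langle\hSk(x^k)h,h\rangle + \tfrac{\Lalg}{6}\langle\hSk(x^k)h,h\rangle^{3/2} = \modelSk{x^k, h}$; the bound $\als k \le 1$ keeps the step inside the region where this estimate is legitimate, and $\Lalg$ dominates the curvature constant involved.

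\emph{Step 2: the step minimizes the majorant.} Substituting $h = td$ and using $\langle\gSk(x^k),d\rangle = -r^2$, $\langle\hSk(x^k)d,d\rangle = r^2$ reduces $\modelSk{x^k, td} - f(x^k)$ to the scalar cubic $\phi(t) \eqdef -t r^2 + \tfrac12 t^2 r^2 + \tfrac{\Lalg}{6} t^3 r^3$. Its stationarity equation $\tfrac{\Lalg r}{2} t^2 + t - 1 = 0$ has positive root $t = \als k$ as in \eqref{eq:sgn_alpha}, and one checks this is also the unconstrained minimizer of $\modelSk{x^k, \cdot}$, since its stationarity forces the minimizer onto the ray $\{td : t\ge 0\}$ — this is precisely the equivalence asserted in Theorem~\ref{th:three}. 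The same stationarity yields the identity $\tfrac{\Lalg r}{6}\als k^2 = \tfrac{1 - \als k}{3}$.

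\emph{Step 3: evaluate the decrease.} By Steps 1 and 2, $f(x^k) - f(x^{k+1}) \ge -\phi(\als k) = \als k r^2\big(1 - \tfrac{\als k}{2} - \tfrac{\Lalg r}{6}\als k^2\big)$; inserting the identity from Step 2 collapses the bracket to $\tfrac{4 - \als k}{6}$, which is at least $\tfrac12$ because $\als k \in (0,1]$, so $f(x^k) - f(x^{k+1}) \ge \tfrac12 \als k r^2$. It then suffices to show $\als k \ge \min\{(\Lalg r)^{-1/2}, \tfrac12\}$: $\als k$ is decreasing in $\Lalg r$ and equals $\tfrac12$ at $\Lalg r = 4$, hence $\als k \ge \tfrac12$ whenever $\Lalg r \le 4$; and for $\Lalg r \ge 4$ the inequality $\als k \ge (\Lalg r)^{-1/2}$ rearranges to $\sqrt{1 + 2\Lalg r} \ge 1 + \sqrt{\Lalg r}$, which upon squaring is exactly $\Lalg r \ge 4$. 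Combining the two cases with the previous display gives \eqref{eq:one_step_dec}. The only genuinely delicate point is Step~1 — turning the smoothness hypothesis (which controls only the third directional derivative in local norms) into the exact cubic majorant $\modelSk{x^k, \cdot}$ with constant $\Lalg$, and confirming the step lies where that majorant is valid; Steps 2 and 3 are routine once one spots the case split at $\Lalg r = 4$.
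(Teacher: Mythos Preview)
Your proposal is correct and follows essentially the same approach as the paper: both use the cubic majorant \eqref{eq:semistrong_approx} to get $f(x^k)-f(x^{k+1}) \ge \als k\bigl(1 - \tfrac{\als k}{2} - \tfrac{\Lalg}{6}\als k^2 r\bigr)r^2$, reduce the bracket to $\tfrac12$ via the stationarity identity for $\als k$, and then lower-bound $\als k$ by $\min\{(\Lalg r)^{-1/2},\tfrac12\}$. Your case split at $\Lalg r = 4$ is exactly the justification the paper leaves implicit in its final inequality, and your flag on Step~1 (that the cubic majorant with constant $\Lalg$ strictly speaking rests on semi-strong self-concordance rather than Definition~\ref{def:scs} alone) is a fair observation about a point the paper's proof glosses over by citing \eqref{eq:semistrong_approx}.
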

However, decrease \eqref{eq:one_step_dec} is not useful for the fast global $\okd$ rate. Instead, we leverage insights from the regularized Newton methods: that the model $\modelS{x,h}$ upper bounds the function and minimizing it in $h$ decreases the function value.

\begin{proposition}[\citet{hanzely2022damped}, Lemma 2]  \label{pr:semistrong_stoch}
    For $\Lsemi$--semi-strong self-concordant $f:\R^d \to \R$, and any $x \in \Rd, h \in \mathbb{R}^{\tau(\s)}$, sketches $\s \in \mathbb{R}^{d \times \tau(\s)}$ 
    and $x_+ \eqdef x+ \s h$ it holds
    \begin{align}
        \left \vert f(x_+) - f(x) - \la \g(x), \s h \ra - \afrac 1 2 \normsM {\s h} x \right \vert \leq \afrac {\Lsemi} 6 \normM {\s h} x ^3 \label{eq:semistrong_approx}
    \quad \text{and} \quad 
        f(x_+) \leq \modelS{x,h},
    \end{align}
    hence for $h^* \eqdef \argmin_{h \in \mathbb{R}^{\tau(\s)}} \modelS {x,h}$ and corresponding $ x_+ $ we have functional value decrease,
    \begin{align*}
        f(x_+) \leq \modelS{x,h^*} = \min_{h \in \tau(\s)} \modelS {x,h} \leq & \modelS{x,0}= f(x).
    \end{align*}
\end{proposition}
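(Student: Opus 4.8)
The plan is to reduce the two-sided estimate to a one-dimensional second-order Taylor expansion with integral remainder, and then to control that remainder using semi-strong self-concordance. Fix $x,\s,h$, abbreviate $s\eqdef\s h\in\Rd$ and $x_+=x+s$, and consider $\varphi(t)\eqdef f(x+ts)$ on $[0,1]$. Since $f$ is twice differentiable, $\varphi\in C^2$ and the integral form of Taylor's theorem gives $f(x_+)=f(x)+\la\g(x),s\ra+\int_0^1(1-t)\,\la\h(x+ts)s,s\ra\,dt$. Because $\int_0^1(1-t)\,dt=\tfrac12$, we also have $\tfrac12\normsM s x=\int_0^1(1-t)\,\la\h(x)s,s\ra\,dt$, so subtracting,
\[
f(x_+)-f(x)-\la\g(x),s\ra-\tfrac12\normsM s x=\int_0^1(1-t)\,\la(\h(x+ts)-\h(x))\,s,\,s\ra\,dt .
\]

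The key step is to bound the integrand pointwise in the local geometry at $x$. By Cauchy--Schwarz in the $\h(x)$-weighted inner product, for any symmetric $\mathbf H$ one has $|\la\mathbf Hs,s\ra|\le\normMd{\mathbf Hs}{x}\,\normM s x\le\normM{\mathbf H}{op}\,\normsM s x$, where the last inequality is exactly the definition \eqref{eq:intro_matrix_operator_norm} of $\normM{\cdot}{op}$ applied with test vector $v=s$. Taking $\mathbf H=\h(x+ts)-\h(x)$, Definition~\ref{def:semi-self-concordance} together with $\normM{ts}{x}=t\normM s x$ for $t\ge0$ gives $|\la(\h(x+ts)-\h(x))s,s\ra|\le\Lsemi\, t\,\normM s x\cdot\normsM s x=\Lsemi\, t\,\normM s x^{3}$. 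Substituting into the integral identity above and using $\int_0^1 t(1-t)\,dt=\tfrac16$ yields the first claim, namely that the displayed difference has absolute value at most $\tfrac{\Lsemi}{6}\normM{\s h}{x}^{3}$.

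The remaining assertions follow cheaply. Dropping the absolute value in the first claim, $f(x_+)\le f(x)+\la\g(x),\s h\ra+\tfrac12\normsM{\s h}{x}+\tfrac{\Lsemi}{6}\normM{\s h}{x}^{3}$; since $\Lalg\ge\Lsemi$ is a standing requirement of \sgn{}, the right-hand side is bounded by $\modelS{x,h}$ by its definition \eqref{eq:sgn_Ts}, which is $f(x_+)\le\modelS{x,h}$. Finally, for $h^*\eqdef\argmin_{h}\modelS{x,h}$ and the corresponding $x_+=x+\s h^*$, chaining this with the trivial bound $\min_h\modelS{x,h}=\modelS{x,h^*}\le\modelS{x,0}=f(x)$ (the linear, quadratic and cubic terms of $\modelS{x,\cdot}$ all vanish at $h=0$) gives $f(x_+)\le\modelS{x,h^*}\le f(x)$.

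I do not expect a genuine obstacle here: the argument is the weighted-norm analogue of the classical cubic Taylor bound, and the statement is quoted as \citet{hanzely2022damped}, Lemma~2. The only thing demanding care is bookkeeping with the non-Euclidean norms — pairing the primal norm $\normM{\cdot}{x}$ with the dual norm $\normMd{\cdot}{x}$ consistently with \eqref{eq:intro_matrix_operator_norm} in the Cauchy--Schwarz step, and using $t\ge0$ when extracting $t$ from $\normM{ts}{x}$ — together with checking that mere twice-differentiability of $f$ suffices for the integral-remainder form used (it does, since $\h$ is continuous along the segment).
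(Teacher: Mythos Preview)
Your proof is correct and is essentially the standard argument for this result: the paper does not prove the proposition itself but cites it from \citet{hanzely2022damped}, and your integral-remainder Taylor expansion combined with the operator-norm bound from Definition~\ref{def:semi-self-concordance} is precisely the intended route. The bookkeeping with the local primal/dual norms and the use of $\Lalg\ge\Lsemi$ to pass to $\modelS{x,h}$ are handled correctly.
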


We formulate the one-step decrease lemma analogical to regularized Newton methods. 
\begin{lemma} \label{le:global_step}
    Fix any $y \in \Rd$. Let the function $f:\R^d\to\R$ be $\Lsemi$--semi-strong self-concordant and sketch matrices $\sk \sim \cD$ have unbiased projection matrix, Assumption~\ref{as:projection_direction}. Then \sgn{} has the decrease    
    \begin{align}  \label{eq:global_step}
        \E{f(x^{k+1}) | x^k} &\leq \left(1- \afrac \tau d \right) f(x^k) + \afrac \tau d f(y) + \afrac \tau d\afrac {\max \Lalg + \Lsemi}  6 \normM{y-x^k} {x^k} ^3 .
    \end{align}
\end{lemma}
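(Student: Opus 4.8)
The plan is to run the whole argument through the \rnewton{} face \eqref{eq:sgn_reg} of the method. Since the objective \eqref{eq:objective} has positive definite Hessian we have $\g(x^k)\in\Range{\h(x^k)}$, so \Cref{th:three} applies and $x^{k+1}=x^k+\sk h^k$ with $h^k=\argmin_{h}\modelSk{x^k,h}$. \Cref{pr:semistrong_stoch} then yields the deterministic upper bound $f(x^{k+1})\le\modelSk{x^k,h^k}=\min_{h}\modelSk{x^k,h}$, so it only remains to bound this minimum above by a convenient feasible $h$ and then take $\E{\cdot\mid x^k}$ over $\sk\sim\cD$. Note that the stepsize $\als k$ of \eqref{eq:sgn_aicn} never has to appear in this proof; we only use that \eqref{eq:sgn_reg} produces the same iterate, which is \Cref{th:three}.

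First I would pick the displacement that best approximates $y-x^k$ inside the sketched subspace: by \Cref{le:sketch_equiv}, $\pk k(y-x^k)\in\Range{\sk}$, hence there is an $h$ with $\sk h=\pk k(y-x^k)$. Plugging it into the $\sk h$-form of the model in \eqref{eq:sgn_Ts} gives
\begin{align*}
f(x^{k+1})\le f(x^k)+\la\g(x^k),\pk k(y-x^k)\ra+\tfrac12\normsM{\pk k(y-x^k)}{x^k}+\tfrac{\Lalg}{6}\normM{\pk k(y-x^k)}{x^k}^3.
\end{align*}
Taking $\E{\cdot\mid x^k}$, the linear term becomes $\tfrac\tau d\la\g(x^k),y-x^k\ra$ by linearity in $\pk k$ together with \Cref{as:projection_direction}; the quadratic term becomes $\tfrac\tau d\normsM{y-x^k}{x^k}$ by \eqref{eq:proj_h}; and the cubic term is at most $\tfrac\tau d\normM{y-x^k}{x^k}^3$ by the last inequality of \Cref{le:projection_contract}. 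Hence
\begin{align*}
\E{f(x^{k+1})\mid x^k}\le f(x^k)+\tfrac\tau d\Big(\la\g(x^k),y-x^k\ra+\tfrac12\normsM{y-x^k}{x^k}\Big)+\tfrac\tau d\cdot\tfrac{\Lalg}{6}\normM{y-x^k}{x^k}^3.
\end{align*}

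Finally I would convert the bracketed linear-plus-quadratic block into $f(y)$. Writing $f(x^k)=(1-\tfrac\tau d)f(x^k)+\tfrac\tau d f(x^k)$ and invoking the lower half of the second-order estimate \eqref{eq:semistrong_approx} along the direction $y-x^k$ — that is, \Cref{pr:semistrong_stoch} with the trivial sketch $\s=\mI$ and $h=y-x^k$, which is admissible because \eqref{eq:semistrong_approx} is really a statement about an arbitrary displacement, obtained by integrating the definition of $\Lsemi$-semi-strong self-concordance —
\begin{align*}
f(x^k)+\la\g(x^k),y-x^k\ra+\tfrac12\normsM{y-x^k}{x^k}\le f(y)+\tfrac{\Lsemi}{6}\normM{y-x^k}{x^k}^3,
\end{align*}
and substituting, the two cubic terms merge into $\tfrac\tau d\cdot\tfrac{\Lalg+\Lsemi}{6}\normM{y-x^k}{x^k}^3$, which is exactly \eqref{eq:global_step}.

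I do not expect a genuine obstacle here: every ingredient — the model upper bound of \Cref{pr:semistrong_stoch}, the fact that $\pk k(y-x^k)$ is the $\normM\cdot{x^k}$-optimal surrogate of $y-x^k$ in $\Range{\sk}$, and the three expectation identities of \Cref{le:projection_contract} — is already isolated in the excerpt. The one point that needs a sentence of care is reading \eqref{eq:semistrong_approx} for a general direction rather than only for a vector lying in some sketched subspace, so that it can be applied to $y-x^k$; the rest is bookkeeping.
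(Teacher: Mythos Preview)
Your proposal is correct and follows essentially the same argument as the paper: upper-bound $f(x^{k+1})$ by the subspace model at the feasible point $\sk h=\pk k(y-x^k)$, take expectations using \Cref{as:projection_direction} and \Cref{le:projection_contract}, and then apply \eqref{eq:semistrong_approx} with $\s=\mI$ to convert the linear-plus-quadratic block into $f(y)$. The only cosmetic difference is that the paper packages the same choice via the auxiliary $\Omega_{\s}(x,h')\eqdef\modelS{x,h}$ evaluated at $\s h=\p h'$, but the substance is identical.
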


With \Cref{le:global_step}, we are one step before the main converge result. All that is left is to choose $y$ as a linear combination of $x^k$ and $x^*$ and to bound distance between $\normM {x^k - x^*}{x^k}$.

\section{Main convergence results}
We are ready to present the main convergence results, namely: \textbf{i)} the fast global \textbf{$\okd$} rate, \textbf{ii)} the fast local conditioning-independent linear rate, \textbf{iii)} the global linear convergence rate to the neighborhood of the solution.

\begin{table*}
    \centering
    \setlength\tabcolsep{1.5pt} 
    \begin{threeparttable}[b]
        {\scriptsize
            \renewcommand\arraystretch{2.2}
            \caption[Summary of globally-convergent Newton methods]{             
            Globally convergent Newton-like methods for smooth, strongly convex functions.
            We highlight the best-known rates in blue. }
            \label{tab:detailed}
            \centering 
            \begin{tabular}{cccccccc}\toprule[.2em]
                \bf Algorithm & \bf \makecell{Stepsize \\ range} & \bf  \makecell{Affine invariant \\ algorithm?} & \bf \makecell{Iteration cost\tn{0}} &\bf  \makecell{Linear\tn{1} \\ convergence} & \bf \makecell{Global convex \\convergence } & \bf Reference \\
                \bottomrule[.2em]
                \makecell{\newton{}} & $1$ & \cmark & $\mathcal O \left(d^3 \right)$ & \xmark & \xmark &  \citep{kantorovich1948functional} \\
                \midrule
                \makecell{\Dnewton{} B}  & $(0,1]$ & \cmark & $\mathcal O \left(d^3 \right)$ & \xmark & $\mathcal O \left(k^{-\frac 12} \right)$ &  \citep{nesterov1994interior} \\
                \midrule
                \makecell{\aicn{}} & $(0,1]$ & \cmark & $\mathcal O \left(d^3 \right)$ & \xmark & $\cO\left( {\color{blue} {k^{-2}} } \right)$ &   \citep{hanzely2022damped} \\
                \bottomrule[.2em]
                \makecell{\Cnewton{}} & 1 & \xmark &  $\mathcal O \left(d^3 \log \frac 1 \varepsilon \right)$\tn{3} &  \xmark & $\mathcal O \left( {\color{blue} k^{-2} } \right)$ &  \citep{nesterov2006cubic} \\
                \midrule
                \makecell{\Gnewton{}} & $1$ & \xmark & $\mathcal O \left(d^3 \right)$ & \xmark & $\mathcal O \left(k^{-\frac 14} \right)$ &   \citep{polyak2009regularized} \\
                \midrule
                \makecell{\Gnewton} & 1& \xmark & $\mathcal O \left(d^3 \right)$ & \xmark & $\mathcal O \left( {\color{blue} k^{-2} } \right)$  & \makecell{\citep{mishchenko2021regularized}, \\ \citep{ doikov2021optimization}} \\
                \bottomrule[.2em]
                \makecell{\en{} \\ (\Cref{alg:end})} & $\frac 1 {\Lrel}$\tn{4} & \cmark & $\mathcal O \left(d^3 \right)$ & global\tn{4} & \xmark &  \citep{KSJ-Newton2018} \\
                \midrule
                \makecell{\rsn{}\\(\Cref{alg:rsn})} & $\frac 1 {\Lrel}$\tn{4} & \cmark & \makecell[l]{
                    $\mathcal O({\color{blue}d\tau^2}),$\\
                    $\mathcal O({\color{blue} 1})$ \quad  if $\tau=1$} & global\tn{4} & $\mathcal O \left(k^{-1} \right)$  & \citep{RSN} \\
                \midrule
                \makecell{\sscn{}\\ \Cref{alg:sscn}} & 1 & \xmark & \makecell[l]{
                $\mathcal O(d\tau^2+\tau^3 \log \frac 1 \varepsilon),$\\
                $\mathcal O(\log \frac 1 \varepsilon)$ \quad if $\tau=1$}\tn{3} & local & $\mathcal O \left(k^{-1} \right)$ & \citep{hanzely2020stochastic} \\
                \bottomrule[.2em]
                \makecell{\textbf{\sgn{}}\\(\Cref{alg:sgn})} & $(0,1]$ & \cmark & \makecell[l]{
                    $\mathcal O({\color{blue}d\tau^2}),$\\
                    $\mathcal O({\color{blue} 1})$ \quad  if $\tau=1$} 
                     & \makecell{loc. + glob.\tn{5}} & $\mathcal O \left( {\color{blue} k^{-2} } \right)$ & \textbf{This work} \\
                \bottomrule[.2em]
            \end{tabular}
        }
        \begin{tablenotes}
            {\scriptsize
                \item [\color{red}(0)] Constants $d$ is dimension, $\tau \ll d$ is rank of sketches $\sk $. We report the rate of implementation using matrix inverses.
                \item [\color{red}(1)] Terms ``local" and ``global'' denote whether algorithm has local or global linear rate 
                (under possibly stronger assumptions).
                \item [\color{red}(3)] Cubic Newton and SSCN solve an implicit problem in each iteration, which naively implemented, requires $\times \log  \frac 1 \varepsilon$ matrix inverses approximate sufficiently \citep{hanzely2022damped}. For larger $\tau$ or high precision $\varepsilon$ (case $\tau \log \frac 1 \varepsilon \geq d$), this becomes the bottleneck.
                \item [\color{red}(4)] Relative smoothness constant $\Lrel$ (Def. \ref{def:rel}). \citet{KSJ-Newton2018} shows convergence under a weaker, $c$-stability assumption.
                \item [\color{red}(5)] We have separate results for local convergence (\Cref{th:local_linear}) and global convergence to the corresponding neighborhood (\Cref{th:global_linear}). 
            }
        \end{tablenotes}
    \end{threeparttable}
    \vspace{\vspacefig}
\end{table*}

\subsection{Global convex $\okd$ convergence} \label{sec:conv}
Denote the initial level set and its diameter
\begin{equation}
    \level \eqdef \left\{ x\in\Rd: f(x) \leq f(x^0) \right\} \qquad \text{and} \qquad R \eqdef \sup_{x,y \in \level} \normM {x-y}x.
\end{equation}
Previous lemmas imply that iterates of \sgn{} stay in $\level$, $x^k \in \level \, \forall k\in \N$.

\begin{theorem} \label{th:global_convergence}
    For $\Lsemi$--semi-strongly concordant function $f:\R^d\to\R$ with finite diameter of initial level set $\level$, $R<\infty$ and sketching matrices with Assumption~\ref{as:projection_direction}, \sgn{} has $\cO(k^{-2})$ global convergence rate,
    \begin{align}        
        \E{f(x^k) -\fopt} 
        &\leq \afrac {4 d^3 (f(x^0)-\fopt)}{\tau^3 k^3}  + \afrac {9(\max \Lalg + \Lsemi) d^2 R^3} {2 \tau^2 k^2}.
    \end{align}
\end{theorem}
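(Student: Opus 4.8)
The plan is to collapse the statement to a one-dimensional recursion on the expected suboptimality $\delta_k \eqdef \E{f(x^k)} - \fopt$ and then solve that recursion. The engine is \Cref{le:global_step}, which holds for \emph{every} fixed $y\in\Rd$; I will apply it with $y=y_k\eqdef x^k+\theta_k(\opt-x^k)$ where $\theta_k\in[0,1]$ is a weight chosen purely for the analysis (SGN itself has no free stepsize — it runs with $\als k$ from \eqref{eq:sgn_alpha}). Since $\opt\in\level$ (because $f(\opt)=\fopt\le f(x^0)$) and $x^k\in\level$ (established just before the theorem), we get $\normM{y_k-x^k}{x^k}=\theta_k\normM{\opt-x^k}{x^k}\le\theta_k R$, and convexity gives $f(y_k)\le(1-\theta_k)f(x^k)+\theta_k\fopt$. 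Plugging these into \eqref{eq:global_step}, subtracting $\fopt$, and taking a full expectation (using that $\theta_k$ is deterministic and $1-\frac\tau d\theta_k\ge 0$) yields, with $c\eqdef\frac\tau d\in(0,1]$ and $C\eqdef\max\{\Lalg,\Lsemi\}+\Lsemi$,
\[
\delta_{k+1}\;\le\;(1-c\theta_k)\,\delta_k\;+\;\tfrac{cCR^3}{6}\,\theta_k^3,\qquad\text{valid for any }\theta_k\in[0,1].
\]

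The remaining work is to solve this recursion, and I claim the invariant $\delta_k\le\phi_k\eqdef\frac{4\delta_0}{c^3k^3}+\frac{9CR^3}{2c^2k^2}$ for all $k\ge 1$; substituting $c=\tau/d$ then gives the stated bound verbatim. Set $K\eqdef\lceil 3/c\rceil$ and argue in two stages. \emph{Stage one}, $1\le k\le K$: take $\theta_j=1$ for $j<k$, so the recursion reads $\delta_{j+1}\le(1-c)\delta_j+\frac{cCR^3}{6}$ and unrolls to $\delta_k\le(1-c)^k\delta_0+\frac{CR^3}{6}$; then $(1-c)^k\le e^{-ck}$ together with $\sup_{t\ge 0}t^3e^{-t}=27e^{-3}<4$ bounds $(1-c)^k\delta_0\le\frac{4\delta_0}{c^3k^3}$, while $ck\le c\lceil 3/c\rceil<4$ gives $c^2k^2<27$, hence $\frac{CR^3}{6}\le\frac{9CR^3}{2c^2k^2}$; adding the two gives $\delta_k\le\phi_k$, and in particular $\delta_K\le\phi_K$. \emph{Stage two}, $k\ge K$: take $\theta_k=\frac{3}{c(k+1)}\le\frac{3}{cK}\le 1$, so the recursion becomes $\delta_{k+1}\le\frac{k-2}{k+1}\delta_k+\frac{9CR^3}{2c^2(k+1)^3}$; feeding in $\delta_k\le\phi_k$ and comparing with $\phi_{k+1}$ term by term, both the $\delta_0$-term and the $CR^3$-term reduce to the single elementary inequality $(k-2)(k+1)^2\le k^3$, i.e. $-3k-2\le 0$. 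Since the base case $k=K$ is supplied by stage one, induction closes stage two, and $\delta_k\le\phi_k$ holds for all $k\ge 1$.

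The main obstacle — and the whole reason for two stages — is that $c=\tau/d$ may be tiny, so $1-c\theta_k\ge 1-c$ can never contract faster than by a fixed geometric factor; during the first $\Theta(1/c)$ iterations the suboptimality simply cannot drop at the $k^{-3}$ rate that a single naive induction on $\phi_k$ would demand, so that induction fails for small $k$. The fix rests on two sharp facts: first, that the merely geometric quantity $(1-c)^k\delta_0$ is nonetheless dominated by $\frac{4\delta_0}{c^3k^3}$ for \emph{all} $k$, which needs the tight constant $27/e^3<4$ coming from $\max_t t^3e^{-t}$ (a looser ``keep one binomial term'' bound gives $6>4$ and fails); and second, that the residual constant $\frac{CR^3}{6}$ left over by the $\theta=1$ phase is absorbed into the $k^{-2}$ part of $\phi_k$ exactly in the window $ck\lesssim 1$, which is precisely where that phase runs. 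Making the constants $4$ and $\tfrac92$ match across the handoff at $k=K$, and picking the coefficient $3$ in $\theta_k=\frac{3}{c(k+1)}$ so that both pieces of $\phi_k$ collapse to the same trivial inequality, is the delicate part; the rest is bookkeeping. It is also worth noting up front that Assumption~\ref{as:projection_direction} (a hypothesis of the theorem) is exactly what \Cref{le:global_step} requires, so no further conditions on $\cD$ enter.
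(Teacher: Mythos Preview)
Your proof is correct and lands on the exact constants of the theorem. Both you and the paper start identically: apply \Cref{le:global_step} at $y=x^k+\theta_k(\opt-x^k)$, use convexity plus the level-set diameter bound $\normM{\opt-x^k}{x^k}\le R$, and arrive at the scalar recursion $\delta_{k+1}\le(1-c\theta_k)\delta_k+\tfrac{cCR^3}{6}\theta_k^3$. The divergence is only in how this recursion is solved. The paper follows the standard cubic-Newton telescoping device (as in \citet{nesterov2006cubic} and \citet{hanzely2020stochastic}): set $A_0=\tfrac{4}{3}(d/\tau)^3$, $A_k=A_0+\sum_{t\le k}t^2$, choose $\theta_t=\tfrac{d}{\tau}\tfrac{(t+1)^2}{A_{t+1}}$ so that $1-c\theta_t=A_t/A_{t+1}$, multiply through by $A_{t+1}$, sum, and use $\sum_{t\le k} t^6/A_t^2\le 9k$. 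This is slick and single-pass but requires knowing the right $A_k$ in advance. Your route is more elementary: a burn-in phase with $\theta=1$ handled by the sharp calculus fact $\sup_{t\ge 0}t^3e^{-t}=27e^{-3}<4$, followed by a direct induction on $\phi_k$ with $\theta_k=\tfrac{3}{c(k+1)}$, where both terms collapse to the single inequality $(k-2)(k+1)^2\le k^3$. Your approach makes the geometric-versus-polynomial tension explicit and avoids guessing a Lyapunov sequence; the paper's approach avoids the case split and the delicate constant-matching at the handoff. Either way the argument goes through with the stated constants.
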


\subsection{Fast local linear convergence}
Close to the solution, \sgn{} enjoys a conditioning-independent linear rate. This rate is optimal in the sense that as \sap{} method, \sgn{} cannot achieve local superlinear rate (see \Cref{sec:limits}).
\label{sec:lin_loc}
\begin{theorem}\label{th:local_linear}
    Let function $f:\R^d\to\R$ be $\Ls$--self-concordant in subspaces $\s \sim \cD$ and expected projection matrix be unbiased (Assumption~\ref{as:projection_direction}).
    For iterates of \sgn{} $x^0, \dots, x^k$ such that\footnote{It is possible to relax this inequality by replacing $\Lsemi$ by $L_{\sk}.$} $\normMSdk {\gSk(x^k)} {x^k} \leq \frac 1 {\Lsemi}$ and $\g(x^k) \in \Range{\h(x^k)}$, we have local linear convergence
    \begin{equation}
        \E{f(x^{k})-\fopt)} \leq \left( 1- \afrac \tau {bd} \right)^k (f(x^0)-\fopt),
    \end{equation}
    for $b \eqdef \max \left \{\sqrt{\frac \Lalg \Lsemi } ,2 \right\}$,
    and the local complexity of \sgn{} is independent on the problem conditioning, $\mathcal O \left( \sqrt{\frac \Lalg \Lsemi } \frac d \tau \log \frac 1 \varepsilon \right)$ and $\mathcal O \left( \frac d \tau \log \frac 1 \varepsilon \right)$ for $\Lalg = \Lsemi$. 
\end{theorem}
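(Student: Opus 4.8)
The key insight I would use is Lemma~\ref{le:global_step} applied with the specific choice $y = x^*$, which gives $\E{f(x^{k+1}) \mid x^k} \leq (1-\tfrac\tau d) f(x^k) + \tfrac\tau d \fopt + \tfrac\tau d \tfrac{\max\Lalg + \Lsemi}{6} \normM{x^* - x^k}{x^k}^3$. Subtracting $\fopt$ from both sides yields $\E{f(x^{k+1}) - \fopt \mid x^k} \leq (1-\tfrac\tau d)(f(x^k) - \fopt) + \tfrac\tau d \tfrac{\max\Lalg + \Lsemi}{6} \normM{x^* - x^k}{x^k}^3$. The whole game is then to show that, in the local regime governed by the hypothesis $\normMSdk{\gSk(x^k)}{x^k} \leq \tfrac{1}{\Lsemi}$, the cubic term $\tfrac{\max\Lalg+\Lsemi}{6}\normM{x^*-x^k}{x^k}^3$ is dominated by a suitable fraction of $f(x^k) - \fopt$, so that the two $\tfrac\tau d$ terms combine into a single contraction factor $(1 - \tfrac{\tau}{bd})$.

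\textbf{Key steps in order.} First, I would relate $\normM{x^*-x^k}{x^k}$ to a gradient norm. Since $f$ is self-concordant and we are in a neighborhood of $x^*$, standard self-concordance estimates (Nesterov--Nemirovski type bounds) give $\normM{x^k - x^*}{x^k} \lesssim \normMd{\g(x^k)}{x^k}$ up to a universal constant when $\normMd{\g(x^k)}{x^k}$ is small; I would need to be careful that the hypothesis is stated for the \emph{sketched} gradient norm $\normMSdk{\gSk(x^k)}{x^k}$, but since $\normMSdk{\gSk(x^k)}{x^k} \leq \normMd{\g(x^k)}{x^k}$ fails in general and the reverse also fails, I would instead work directly with a bound of the form $\normM{x^k-x^*}{x^k}^3 \le C\,(f(x^k)-\fopt)$ valid for self-concordant $f$ near the optimum — e.g. using that $f(x^k) - \fopt \ge \omega(\normM{x^k-x^*}{x^*})$ with $\omega(t) = t - \ln(1+t)$, and converting between $\normM\cdot{x^k}$ and $\normM\cdot{x^*}$ via self-concordance (the Hessians are multiplicatively close). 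Second, plugging this bound in gives $\E{f(x^{k+1}) - \fopt \mid x^k} \leq \big(1 - \tfrac\tau d + \tfrac\tau d \cdot \tfrac{(\max\Lalg+\Lsemi)C}{6}\big)(f(x^k)-\fopt)$, and I would check that the hypothesis $\normMSdk{\gSk(x^k)}{x^k}\le 1/\Lsemi$ forces $\tfrac{(\max\Lalg+\Lsemi)C}{6} \le 1 - \tfrac{1}{b}$ with $b = \max\{\sqrt{\Lalg/\Lsemi}, 2\}$, yielding the contraction factor $(1-\tfrac{\tau}{bd})$. Third, I would verify that the local region is \emph{self-improving}: if $x^k$ satisfies the hypothesis then so does $x^{k+1}$ (in expectation, or almost surely along the trajectory), so the recursion can be iterated; this uses Lemma~\ref{le:one_step_dec} / Proposition~\ref{pr:semistrong_stoch} monotonicity ($f(x^{k+1}) \le f(x^k)$) together with the self-concordance link between function-value suboptimality and gradient norm. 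Finally, unrolling the recursion over $k$ steps gives $\E{f(x^k) - \fopt} \le (1-\tfrac{\tau}{bd})^k (f(x^0)-\fopt)$, and the complexity claim $\mathcal O\big(\tfrac{bd}{\tau}\log\tfrac1\varepsilon\big)$ follows by the standard $\log(1/\varepsilon)/\log(1/(1-q))^{-1} \le q^{-1}\log(1/\varepsilon)$ estimate, specializing to $b=2$ when $\Lalg = \Lsemi$.

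\textbf{Main obstacle.} The delicate point is the conversion between the sketched dual gradient norm appearing in the hypothesis, $\normMSdk{\gSk(x^k)}{x^k}$, and a full-space quantity that controls $\normM{x^k-x^*}{x^k}$ and $f(x^k)-\fopt$. These are not monotonically comparable in general, so I expect the proof either takes the hypothesis at face value and combines it with $\normM{h^k}{x^k} = \als k \normMSdk{\gSk(x^k)}{x^k}$ from \eqref{eq:transition-primdual} to control the step length, then uses \emph{per-step} self-concordant descent (Lemma~\ref{le:one_step_dec}) to get a deterministic one-step decrease that is then combined with Lemma~\ref{le:global_step}; or it invokes the footnote's remark that $\Lsemi$ can be replaced by the subspace constant $L_{\sk}$, making the norms line up. Handling the interplay between the stochastic projection (expectation over $\sk$) and the self-concordance estimates — which are pointwise — without losing the conditioning-independence is where the care is needed; I would condition on $x^k$, apply the deterministic self-concordance bounds inside the conditional expectation, and only then take expectations, relying on Lemma~\ref{le:projection_contract} to pass the $\tfrac\tau d$ factor through cleanly.
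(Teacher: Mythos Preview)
Your approach via Lemma~\ref{le:global_step} with $y=x^*$ is \emph{not} what the paper does, and the obstacle you correctly flag is precisely why. The paper never touches $\normM{x^k-x^*}{x^k}$ or Lemma~\ref{le:global_step} in this proof. Instead it works purely with gradient norms: it applies the one-step decrease Lemma~\ref{le:one_step_dec}, observing that the hypothesis $\normMSdk{\gSk(x^k)}{x^k}\le 1/\Lsemi$ makes the factor $\max\{\sqrt{\Lalg\normMSdk{\gSk(x^k)}{x^k}},2\}$ at most $b=\max\{\sqrt{\Lalg/\Lsemi},2\}$, so $f(x^k)-f(x^{k+1})\ge \tfrac{1}{2b}\normsMSdk{\gSk(x^k)}{x^k}$. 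Then it takes expectation over $\sk$, uses the identity $\normsMSdk{\gSk(x^k)}{x^k}=\normsMd{\pk k^\top\g(x^k)}{x^k}$ together with \eqref{eq:proj_g} to get $\E{f(x^k)-f(x^{k+1})}\ge \tfrac{\tau}{2bd}\normsMd{\g(x^k)}{x^k}$, and finally converts the full dual gradient norm to function suboptimality via the self-concordance estimate $f(x^k)-\fopt\le \tfrac{1+\gamma}{2}\normsMd{\g(x^k)}{x^k}$ (Proposition~\ref{pr:sscn_lemmas}, with $\gamma=1$). This chain never needs to control the distance to the optimum.

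Your route has a genuine gap beyond the obstacle you name. The bound you propose, $\normM{x^k-x^*}{x^k}^3\le C\,(f(x^k)-\fopt)$ with $C$ a constant, is dimensionally impossible: the left side scales like distance cubed and the right like distance squared, so any such $C$ must itself contain a factor of $\normM{x^k-x^*}{x^k}$ (or equivalently $\normMd{\g(x^k)}{x^k}$), which then has to be bounded by a \emph{full-space} smallness condition. The theorem's hypothesis is only on the \emph{sketched} dual gradient norm, and since $\normMSdk{\gSk(x^k)}{x^k}\le\normMd{\g(x^k)}{x^k}$ (not the reverse), you cannot extract a bound on $\normM{x^k-x^*}{x^k}$ from it. The paper's approach avoids this entirely because the sketched norm is exactly what Lemma~\ref{le:one_step_dec} provides, and the averaging step \eqref{eq:proj_g} is what promotes it to the full norm \emph{after} the decrease has already been secured. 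In short: drop Lemma~\ref{le:global_step} and start from Lemma~\ref{le:one_step_dec} instead; the route through the cubic remainder is a detour that the hypothesis does not support.
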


\subsection{Global linear convergence} \label{sec:lin_glob}
Our last convergence result is a global linear rate under relative smoothness and relative convexity.
\begin{definition}\citep{RSN} \label{def:rel}
    We call relative convexity and relative smoothness in subspace $\s$ constants $\murel, \Lrel _\s >0,$ for which for all $\forall x, y \in \mathbb \R^d$ and $y_\s = x+\s h$ for $h \in \mathbb{R}^{\tau(\s)}$ hold:
    \begin{align}
        f(y) &\geq f(x) + \la \g(x), y-x \ra + \afrac {\murel}2 \normsM{y-x} x, \label{eq:rel_conv}\\
        f(y_\s) &\leq f(x) + \la \gS(x), y_\s - x \ra + \afrac {\Lrel _\s} {2}  \normsMS {y_\s - x}{x} \label{eq:rels_smooth}.
    \end{align}
\end{definition}

\citet{RSN} shows that updates $x_+ = y_\s,$ where $y_\s$ is a minimizer of RHS of \eqref{eq:rels_smooth} can be written as Newton method with stepsize $\afrac 1 \Lrel$ and have global linear convergence. Conversely, our stepsize $\als k$ varies, \eqref{eq:sgn_alpha}, so this result is not directly applicable to us. Surprisingly, it turns out that a careful choice of $\Lalg$ can guarantee global linear convergence. Observe following:
\begin{itemize} 
\item We can write \sgn{} model \eqref{eq:sgn_Ts} similarly to the relative smoothness \eqref{eq:rels_smooth},
\begin{align}
    x_+ = x + \s &\argmin_{h \in \mathbb{R}^{\tau(\s)}} \Bigg(f(x) +\la \gS(x), h \ra  +\afrac 12 \left( 1 + \afrac {\Lalg}3 \normMS {h} x \right)  \normsMS {h} x \Bigg). 
\end{align}
If $\left( 1 + \afrac {\Lalg}3 \normMS {h} x \right) \geq \Lrel _\s$, then \sgn{} model upper bounds the right-hand side of \eqref{eq:rels_smooth} and therefore, function $f$ as well. Consequently, we can obtain rates similar to \citet{RSN}.

\item To guarantee $\left( 1 + \afrac {\Lalg}3 \normMS {h} x \right) \geq \Lrel _\s$, we can express $\Lalg$ using $\normMS {h} x = \alpha \normMSd {g} x$ as:
\begin{align}
    1 + \afrac {\Lalg}3 \normMS {h} x \geq  \Lrel _\s 
    &\, \Leftrightarrow \, \Lalg \geq  \afrac {3(\Lrel _\s-1)}{\alpha \normMSd {\gS(x)} x}
    \, \Leftrightarrow \, 1 \geq  \afrac {3 (\Lrel _\s-1)}{-1 + \sqrt{1+2\Lalg \normMSd {\gS(x)} x}} \nonumber\\
    & \, \Leftrightarrow \, \Lalg \geq \afrac 32 \afrac  {(\Lrel _\s -1) (3 \Lrel _\s -1)}{\normMSd {\gS(x)} x}. 
\end{align}
\item We have already shown the fast local convergence of \sgn{} (\Cref{th:local_linear}). Now we need to obtain linear rate for just points $x^k$ beyond that neighborhood of convergence, $\normMSdk {\gSk(x^k)} {x^k} \geq \frac 1 {L_{\sk}}$.

For those points $x^k$ we can guarantee it with the choice $\Lalg \geq \sup_{\s} \afrac 92 \Ls \Lrel _\s ^2$.
Such $\Lalg$ also bounds the stepsize far from the solution as $\als k \Lrels \leq \frac 23 $ (see \Cref{le:stepsize_bound} in \Cref{ssec:towards_glob}).
\end{itemize}

We are almost ready to present the global linear convergence result. Finally, the rate depends on the conditioning of the expected projection matrix $\p$, defined as\footnote{We formulate the condition number $\rho(x)$ in local norms, but $l_2$ norms can be used as well.}
    \begin{align}
        \rho(x) 
        &= \min_{g \in \Rd} \afrac { g^\top \E{ \alpha \p}[\h(x)]^\dagger g} {\normsMd g x} \qquad \text{and} \qquad
        \rho \eqdef \min_{x \in \level} \rho(x). \label{eq:rho}
    \end{align}
\begin{theorem} \label{th:global_linear}
    Let $f:\R^d \to \R$ be $\Lrel_\s$--relative smooth in subspaces $\s$ and $\murel$--relative convex. Let sampling $\s\sim \cD$ 
    satisfy $\Null{\s^\top \h(x) \s} = \Null{\s}$
    and $\Range{\h(x)} \subset \Range {\bbE_{\s \sim \cD}\left[\s \s ^\top\right] }$.     
    Then $0 < \rho \leq 1$.
    Choose parameter $\Lalg = \sup_{\s \sim \cD} \afrac 92 \Ls \Lrel _\s^2$.
    
    While iterates $x^0, \dots, x^k$ satisfy $\normMSdk {\gSk(x^k)} {x^k} \geq \frac 1 {L_{\sk}}$, then\footnote{If this inequality does not hold in the current iterate, \Cref{th:local_linear} guarantees even faster convergence.} \sgn{} has the decrease
    \begin{equation}        
    \E{f(x^{k}) - \fopt}  \leq \left(1 - \afrac 43 \rho \murel \right)^k (f(x^0) - \fopt), 
    \end{equation}
    and global linear $\cO\left( \afrac 1 {\rho \murel} \log \afrac 1 \varepsilon \right)$ convergence.
\end{theorem}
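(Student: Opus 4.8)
The plan is to run the \rsn{}-style argument \citep{RSN}: establish a one-step contraction of the function value and couple it with relative convexity. The new element is that the choice $\Lalg=\sup_{\s}\afrac92 L_{\s}\Lrel_{\s}^2$ keeps the varying stepsize $\als k$ uniformly small on the region $\normMSdk{\gSk(x^k)}{x^k}\ge 1/L_{\sk}$, which is exactly what makes the relative-smoothness estimate usable. First I would settle $0<\rho\le1$. Writing $\pk k[\h(x^k)]^\dagger=\sk(\sk^\top\h(x^k)\sk)^\dagger\sk^\top$, the matrix $\h(x^k)^{1/2}\pk k[\h(x^k)]^\dagger\h(x^k)^{1/2}$ is the orthogonal projector onto $\Range{\h(x^k)^{1/2}\sk}$, so $g^\top\pk k[\h(x^k)]^\dagger g\le\normsMd g{x^k}$; since $\als k\in(0,1]$ this survives taking expectations, giving $\rho(x^k)\le1$. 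For $\rho>0$, note $g^\top\E{\als k\pk k}[\h(x)]^\dagger g=\E{\als k\,(\sk^\top g)^\top(\sk^\top\h(x)\sk)^\dagger(\sk^\top g)}$ vanishes only if $\sk\sk^\top g=0$ almost surely, i.e.\ $g^\top\E{\sk\sk^\top}g=0$; the hypotheses $\Null{\sk^\top\h(x)\sk}=\Null{\sk}$ and $\Range{\h(x)}\subseteq\Range{\E{\sk\sk^\top}}$ exclude this for $0\ne g\in\Range{\h(x)}$, so $\rho(x)>0$ pointwise and $\rho=\min_{x\in\level}\rho(x)>0$.

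Next I would prove the one-step decrease. \Cref{le:stepsize_bound} --- obtained by inserting $\normMSdk{\gSk(x^k)}{x^k}\ge 1/L_{\sk}$ and $\Lalg\ge\afrac92 L_{\sk}\Lrel_{\sk}^2$ into \eqref{eq:sgn_alpha} and simplifying $\sqrt{1+9\Lrel_{\sk}^2}\le 1+3\Lrel_{\sk}$ --- yields $\als k\Lrel_{\sk}\le\afrac23$. Applying relative smoothness \eqref{eq:rels_smooth} to the update \eqref{eq:sgn_aicn} with $h^k\eqdef x^{k+1}-x^k$, and using $\la\g(x^k),h^k\ra=-\als k\normsMSdk{\gSk(x^k)}{x^k}$ together with $\normsM{h^k}{x^k}=\als k^2\normsMSdk{\gSk(x^k)}{x^k}$ from \eqref{eq:transition-primdual}, gives
\begin{equation*}
f(x^{k+1})\le f(x^k)-\als k\Bigl(1-\tfrac{\als k\Lrel_{\sk}}{2}\Bigr)\normsMSdk{\gSk(x^k)}{x^k}\le f(x^k)-\tfrac23\als k\normsMSdk{\gSk(x^k)}{x^k},
\end{equation*}
so in particular all iterates stay in $\level$.

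Then I would take conditional expectation over $\sk$. Since $\als k\normsMSdk{\gSk(x^k)}{x^k}=\g(x^k)^\top\bigl(\als k\pk k[\h(x^k)]^\dagger\bigr)\g(x^k)$ and $\als k$ is a scalar, the definition \eqref{eq:rho} of $\rho(x^k)$ gives $\E{\als k\normsMSdk{\gSk(x^k)}{x^k}\mid x^k}\ge\rho\,\normsMd{\g(x^k)}{x^k}$, whence $\E{f(x^{k+1})\mid x^k}\le f(x^k)-\afrac23\rho\,\normsMd{\g(x^k)}{x^k}$. Evaluating relative convexity \eqref{eq:rel_conv} at $y=x^*$ and minimizing its right-hand side over the displacement yields $\normsMd{\g(x^k)}{x^k}\ge 2\murel(f(x^k)-\fopt)$, so $\E{f(x^{k+1})-\fopt\mid x^k}\le(1-\afrac43\rho\murel)(f(x^k)-\fopt)$; taking total expectations and unrolling over the iterates that stay in the regime proves the claim. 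The contraction factor lies in $[0,1)$: it is $<1$ since $\rho\murel>0$, and $\ge0$ because by the previous step $\als k\le\afrac2{3\Lrel_{\sk}}\le\afrac23$ throughout, so the $\rho$ effectively seen along the trajectory is at most $\afrac23$.

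I expect the main obstacle to be the two-sided control of $\rho$. The bound $\rho\le1$ is an easy projection estimate, but $\rho>0$ needs the two sampling hypotheses to force positive-definiteness of $\E{\als k\pk k}[\h(x)]^\dagger$ on $\Range{\h(x)}$, plus a uniformity-over-$\level$ argument. A second delicate point, in the expectation step, is that $\als k$ is a genuine random variable depending on $\sk$ and must be handled as a scalar commuting with the deterministic $[\h(x^k)]^\dagger$ in order to match the definition of $\rho(x^k)$. By contrast the stepsize-bound algebra (\Cref{le:stepsize_bound}) and the relative-smoothness one-step decrease are routine; note also that, unlike the earlier results, this theorem does not invoke Assumption~\ref{as:projection_direction}, which is replaced here by the weaker null-space/range conditions.
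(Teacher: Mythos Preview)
Your proposal is correct and follows essentially the same route as the paper: apply relative smoothness \eqref{eq:rels_smooth} to the \sgn{} step, use the stepsize bound $\als k\Lrel_{\sk}\le\tfrac23$ from \Cref{le:stepsize_bound} to extract a $\tfrac23\als k\normsMSdk{\gSk(x^k)}{x^k}$ decrease, take expectation and invoke the definition of $\rho(x^k)$, and finish with relative convexity \eqref{eq:rel_conv_dec}. The paper handles $0<\rho\le1$ by citing the analogue of \citep[Lemma~7]{RSN} (\Cref{pr:rho}), whereas you sketch a direct argument via the projector identity and the null-space/range hypotheses; both are fine. One small slip: in your parenthetical derivation of \Cref{le:stepsize_bound}, the inequality you actually need is $3\Lrel_{\sk}-1\le\sqrt{1+9\Lrel_{\sk}^2}$ (lower bound on the square root), not the upper bound $\sqrt{1+9\Lrel_{\sk}^2}\le1+3\Lrel_{\sk}$ you wrote---both are true, but only the former points in the right direction.
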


\section{Experiments}
We support our theory by comparing \sgn{} to \sscn{}.
To match practical considerations of \sscn{} and for the sake of simplicity, we adjust \sgn{} in unfavorable way:
\begin{enumerate}      
    \item We choose sketching matrices $\s$ to be unbiased in $l_2$ norms (instead of local hessian norms $\normM \cdot x$ from Assumption~\ref{as:projection_direction}),
    \item To disregard implementation specifics, we report iterations on the $x$--axis. 
Note that \sscn{} needs to use a subsolver (extra line-search) to solve the implicit step in each iteration. If naively implemented using matrix inverses, iterations of \sscn{} are $\times \log \frac 1 \varepsilon$ slower.
We haven't reported time as this would naturally ask for optimized implementations and experiments on a larger scale -- this was out of the scope of the paper.
\end{enumerate}
Despite the simplicity of \sgn{} and unfavorable adjustments, \Cref{fig:vssscn} shows that \sgn{} performs comparably to \sscn{}.
We can point out other properties of \sgn{} based on experiments in the literature.

\begin{itemize}
 
\item \textbf{Rank of $\s$ and first-order methods:} 
\citet{RSN} showed a detailed comparison of the effect of various ranks of $\s$. Also, \citet{RSN} showed that \rsn{} (the Newton method with the fixed stepsize) is much faster than first-order Accelerated Coordinate Descent (\acd{}) for highly dense problems. For extremely sparse problems, \acd{} has competitive performance. As the stepsize of \sgn{} increases as converging to the solution, we expect similar, if not better results.

\item \textbf{Various sketch distributions}:  
\citet{hanzely2020stochastic} considered various distributions of sketch matrices $\s \sim \cD$. In all of their examples, \sscn{} outperformed \cd{} with uniform or importance sampling and was competitive with \acd{}. As \sgn{} is competitive to \sscn{}, similar results should hold for \sgn{} as well.

\item \textbf{Local norms vs $l_2$ norms:} 
\citet{hanzely2022damped} shows that the optimized implementation of \aicn{}  saves time in each iteration over the optimized implementation of \cnewton{}. As \sgn{} and \sscn{} use analogical updates (in the subspaces), it indicates that \sgn{} saves time over \sscn{}.
\end{itemize}

\begin{figure*}
    \centering
    \includegraphics[width=\figsize]{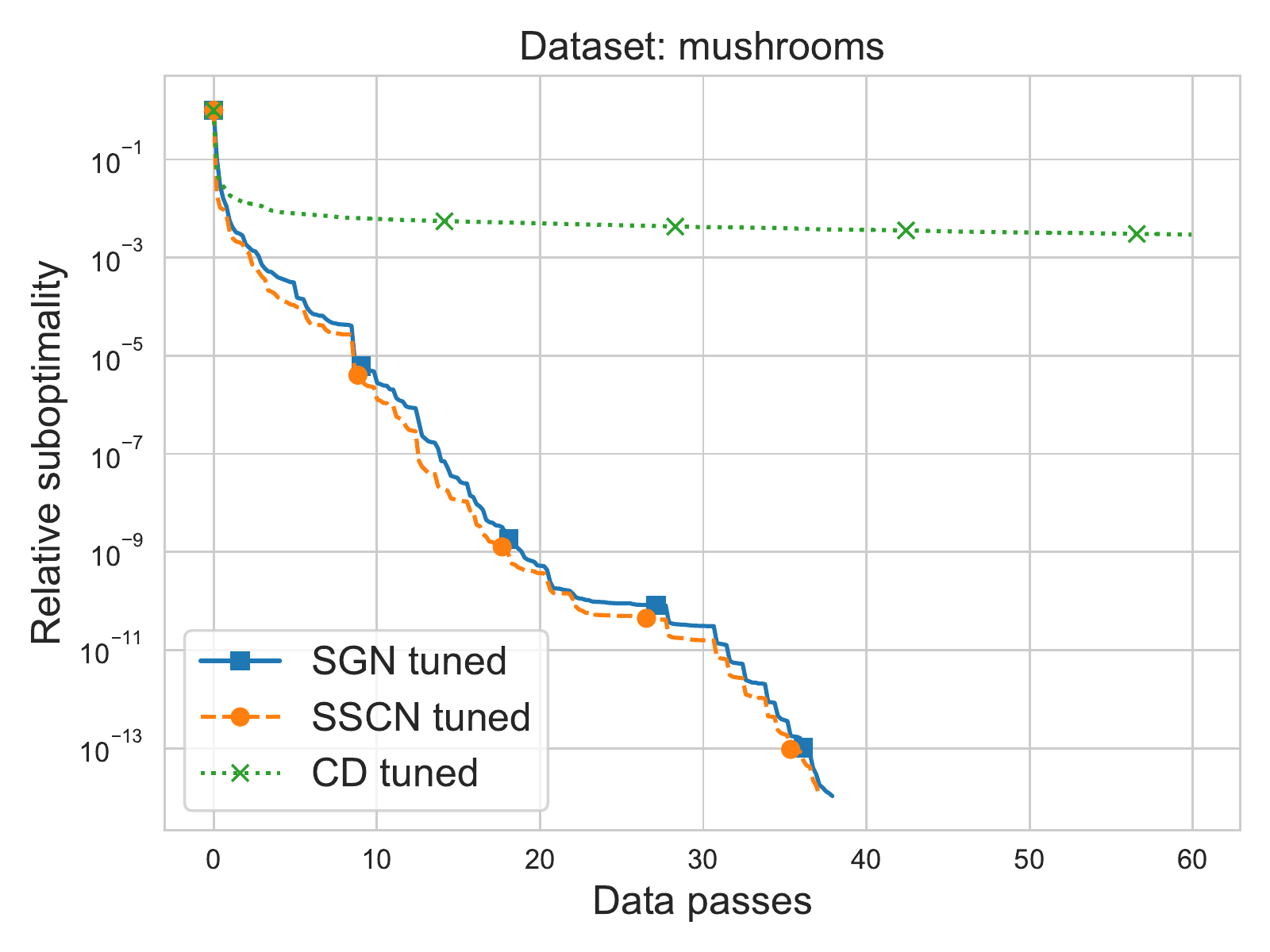}
    \includegraphics[width=\figsize]{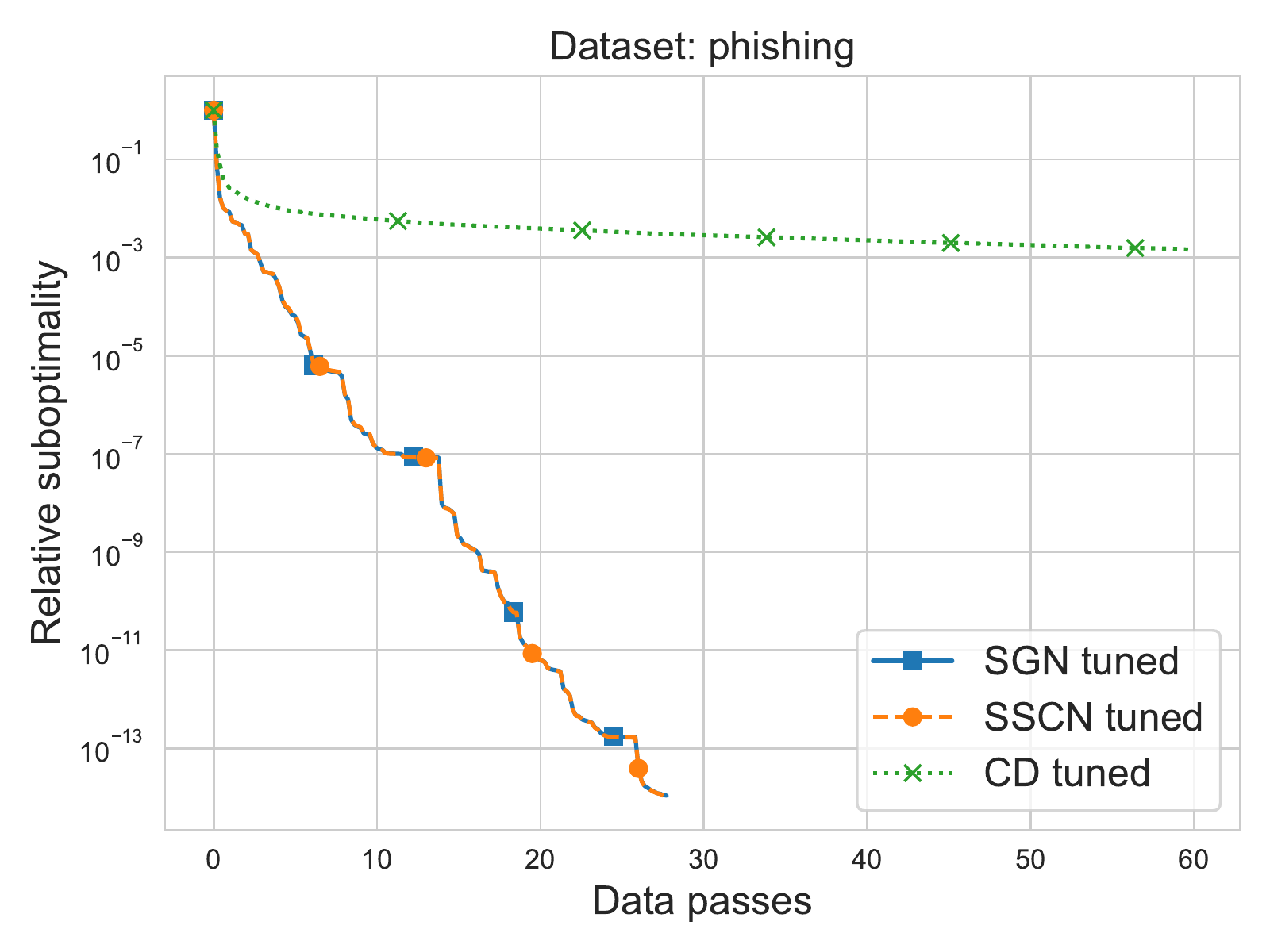}
    \includegraphics[width=\figsize]{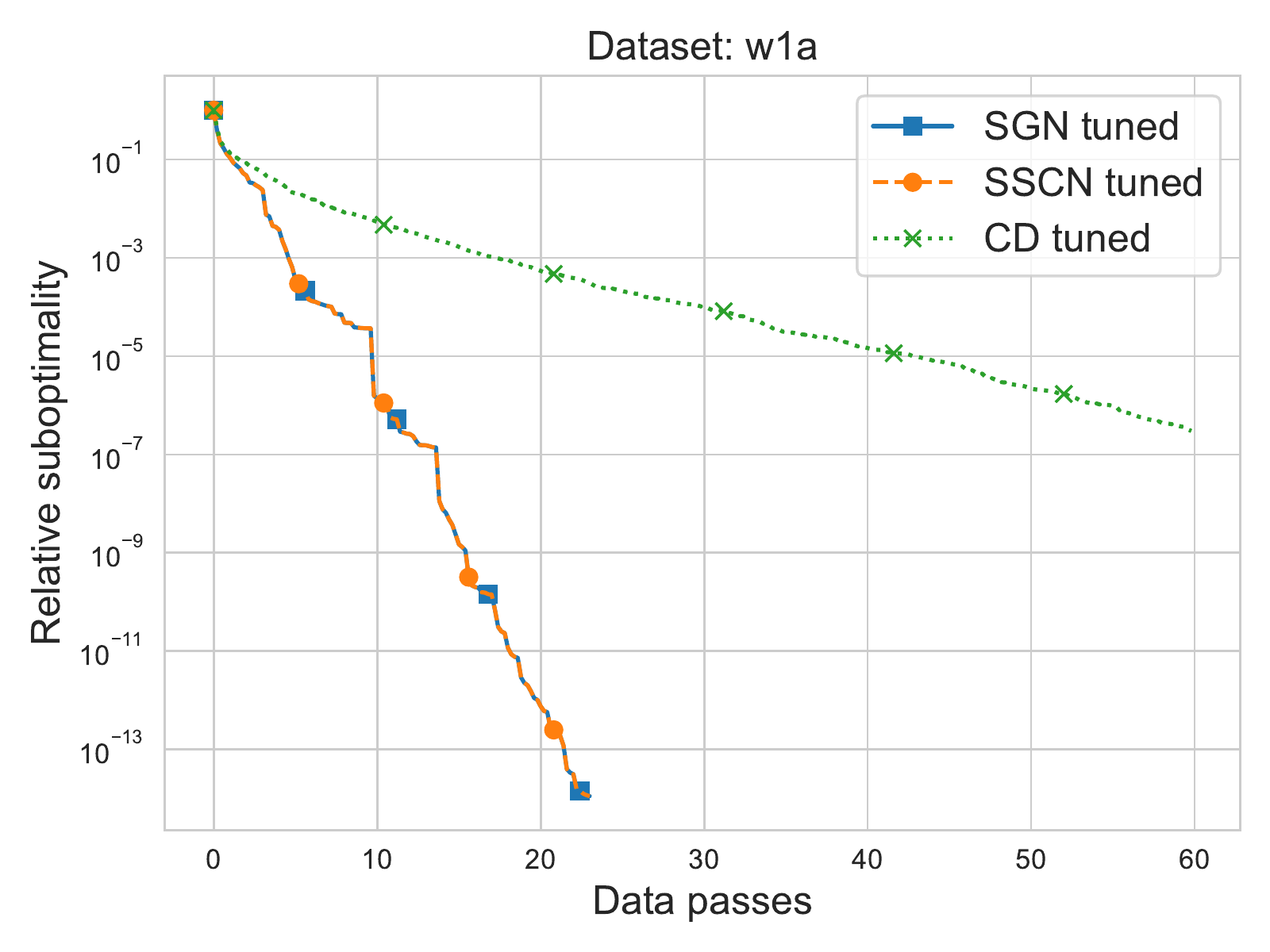}
    \includegraphics[width=\figsize]{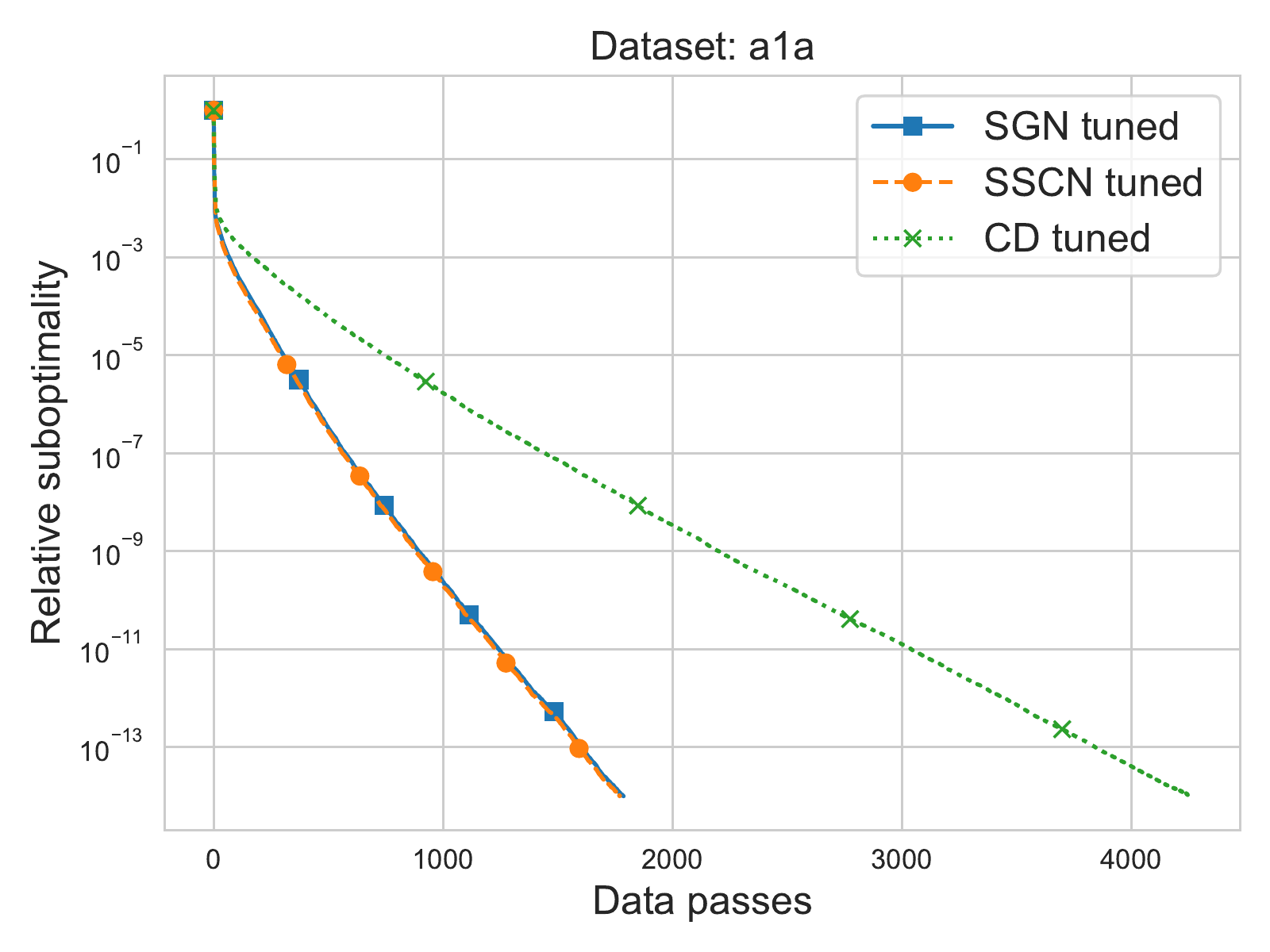}
    \includegraphics[width=\figsize]{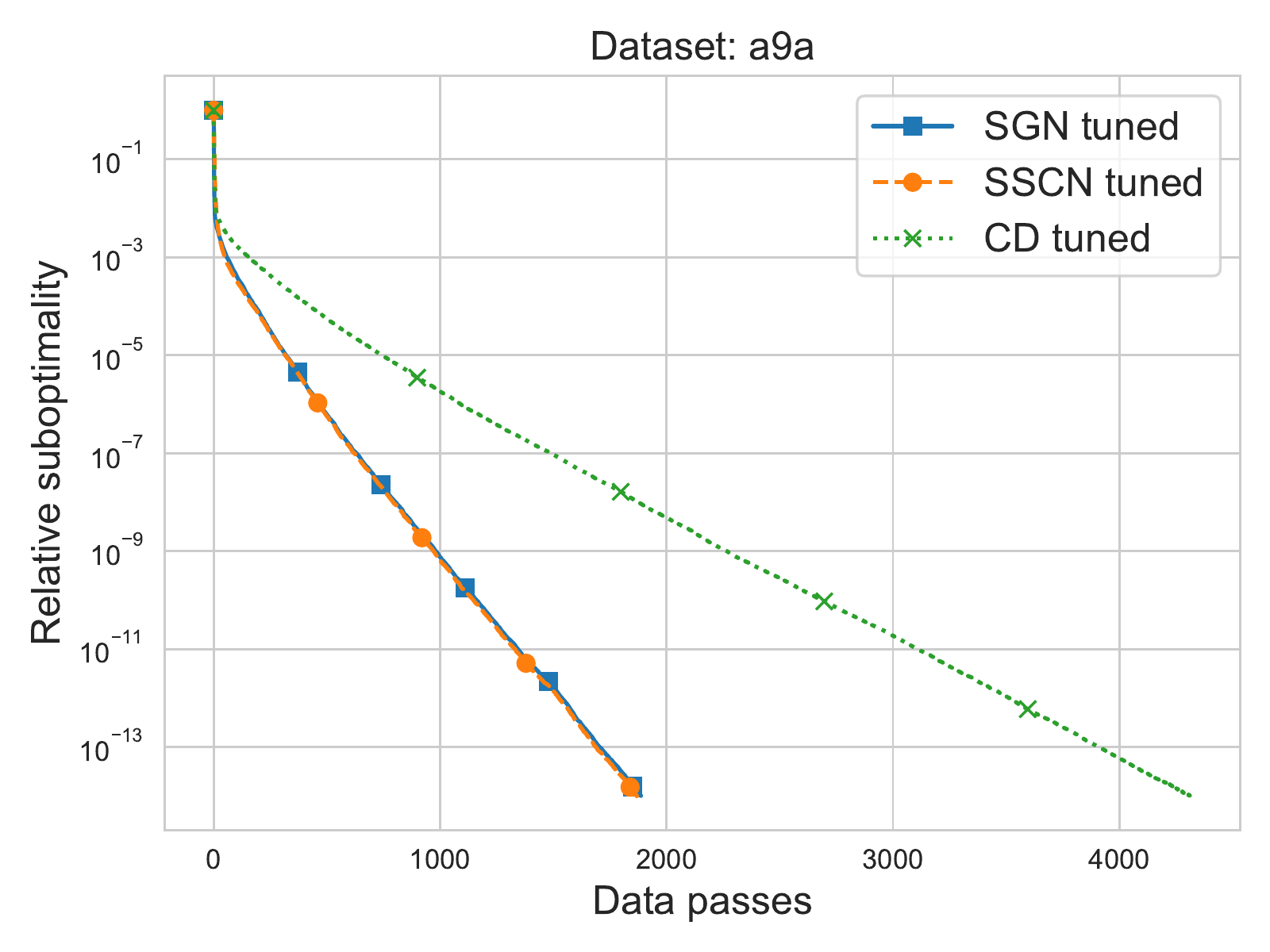}
    \includegraphics[width=\figsize]{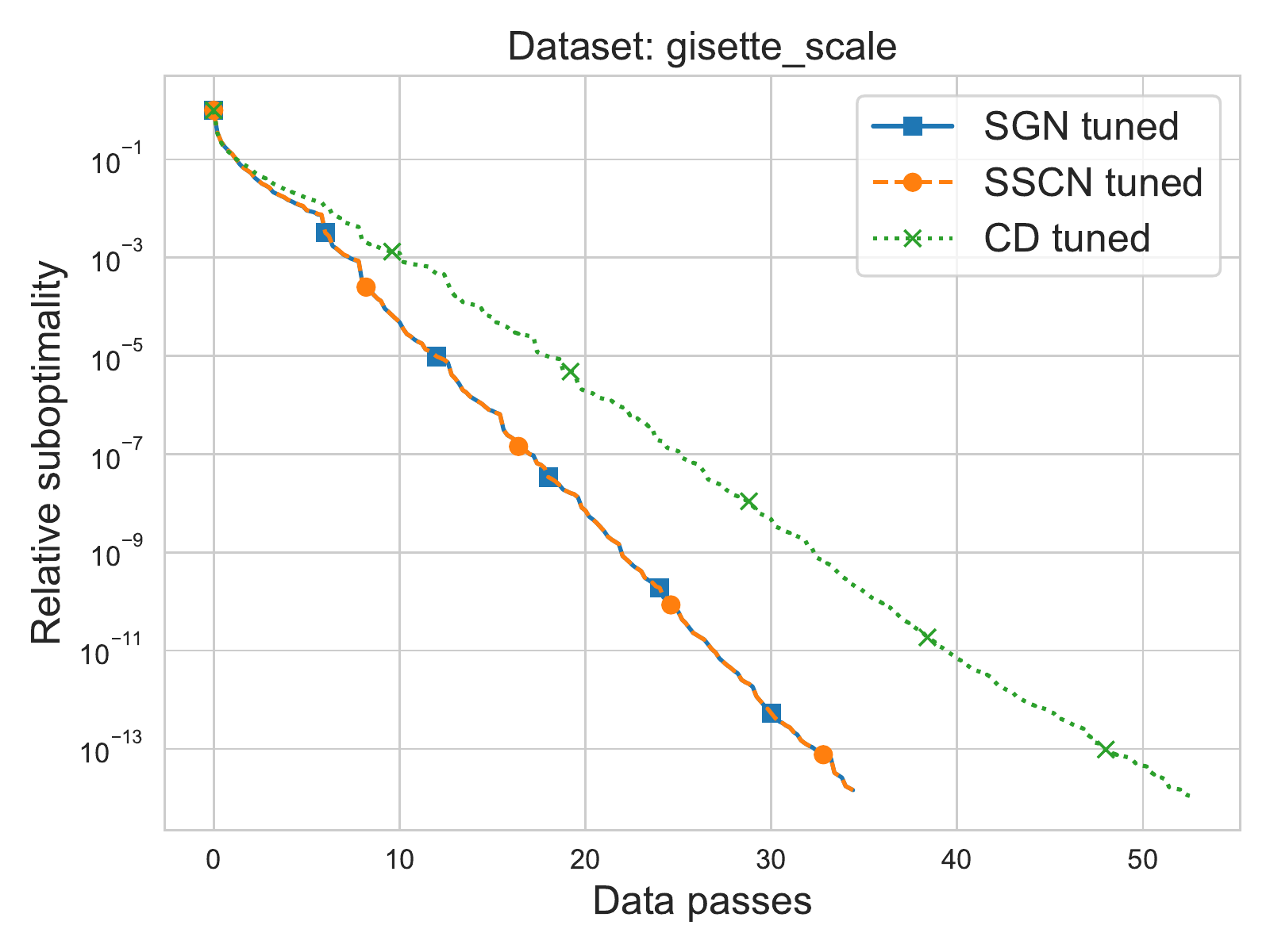}
    \includegraphics[width=\figsize]{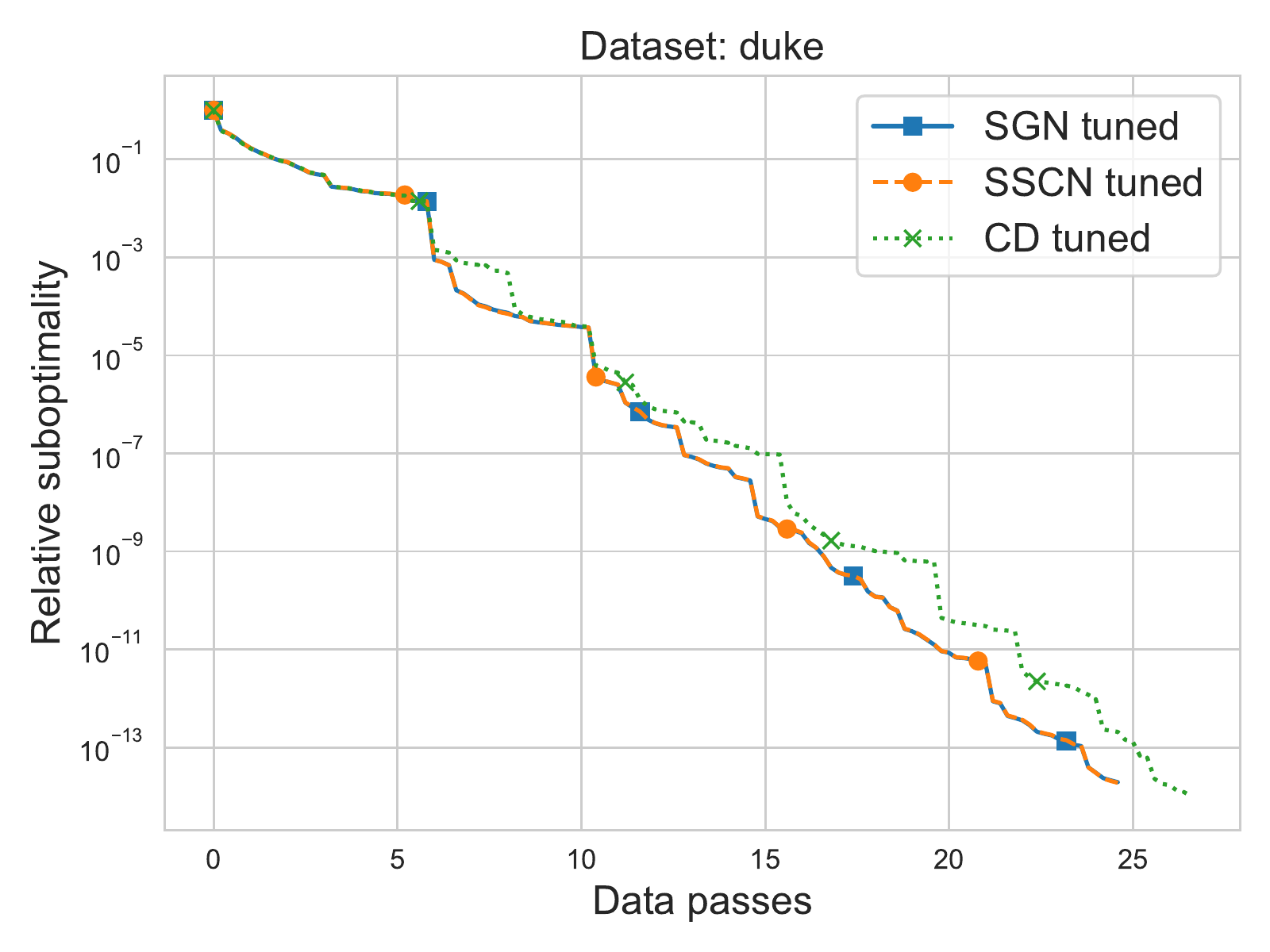}
    \includegraphics[width=\figsize]{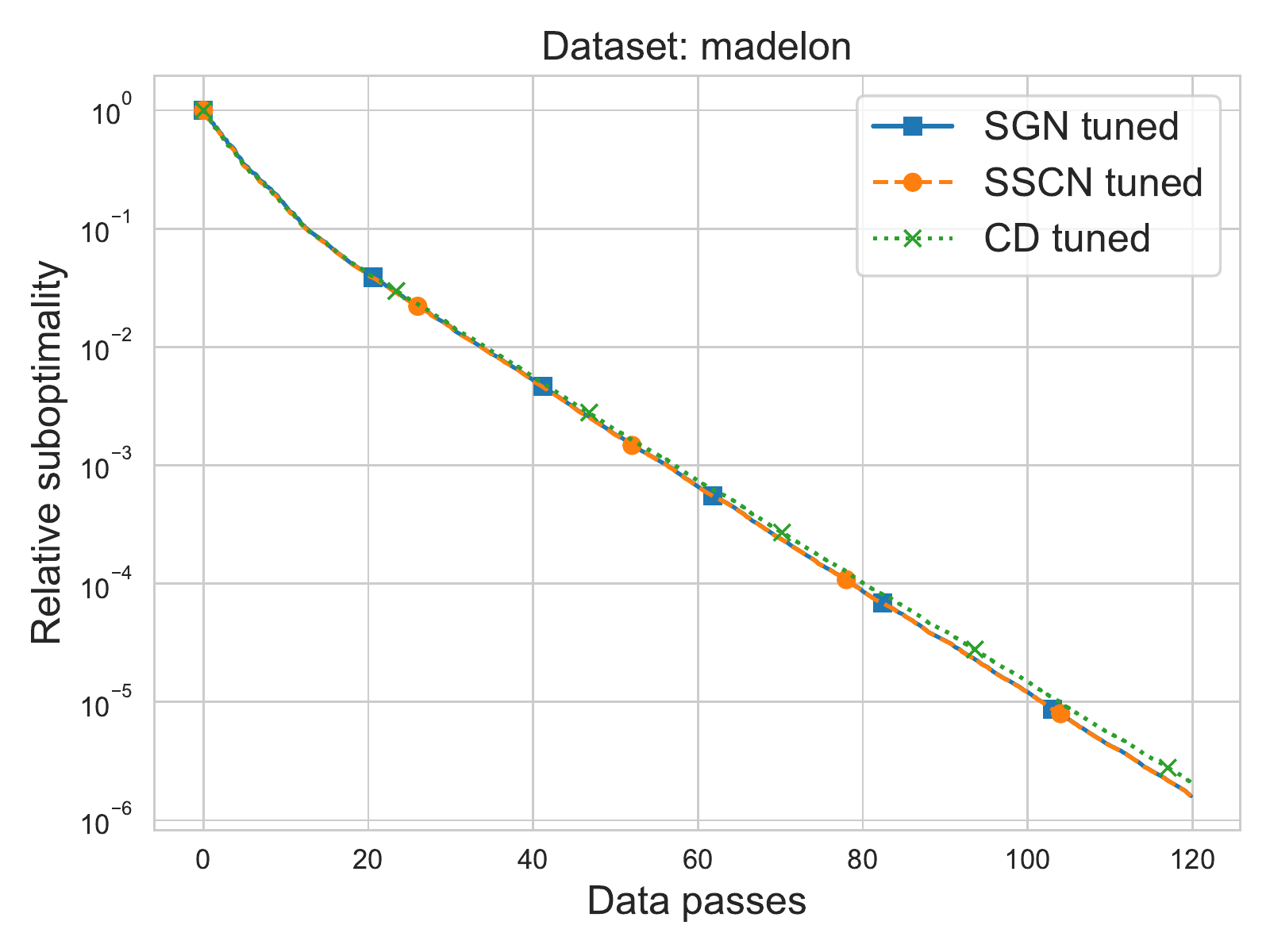}
    \caption[Comparison of low-rank second-order methods with fast convergence guarantees]{Comparison of \sscn{}, \sgn{} and \cd{} on the logistic regression loss on \libsvm{} datasets for sketch matrices $\s$ of rank one. We fine-tune all algorithms for their smoothness parameters.}
    \label{fig:vssscn}
    \vspace{\vspacefig}
\end{figure*}

\bibliographystyle{plainnat}
\bibliography{references}

\def\arxiv{arXiv preprint}\def\neurips{Advances in Neural Information Processing Systems}\def\icml{International Conference on Machine Learning}\def\iclr{International Conference on Learning Representations}\def\aistats{International Conference on Artificial Intelligence and Statistics}\def\siam{SIAM Journal on Optimization}\def\matprog{Mathematical Programming}
\begin{thebibliography}{30}
\providecommand{\natexlab}[1]{#1}
\providecommand{\url}[1]{\texttt{#1}}
\expandafter\ifx\csname urlstyle\endcsname\relax
  \providecommand{\doi}[1]{doi: #1}\else
  \providecommand{\doi}{doi: \begingroup \urlstyle{rm}\Url}\fi

\bibitem[Bolte and Pauwels(2022)]{bolte2022curiosities}
J{\'e}r{\^o}me Bolte and Edouard Pauwels.
\newblock Curiosities and counterexamples in smooth convex optimization.
\newblock \emph{\matprog}, 195\penalty0 (1-2):\penalty0 553--603, 2022.

\bibitem[Christianson(1992)]{christianson1992automatic}
Bruce Christianson.
\newblock Automatic hessians by reverse accumulation.
\newblock \emph{IMA Journal of Numerical Analysis}, 12\penalty0 (2):\penalty0 135--150, 1992.

\bibitem[Conn et~al.(2000)Conn, Gould, and Toint]{conn2000trust}
Andrew Conn, Nicholas~IM Gould, and Philippe Toint.
\newblock \emph{Trust Region Methods}.
\newblock SIAM, 2000.

\bibitem[Doikov and Nesterov(2022)]{doikov2021local}
Nikita Doikov and Yurii Nesterov.
\newblock Local convergence of tensor methods.
\newblock \emph{\matprog}, 193:\penalty0 315--336, 2022.

\bibitem[Doikov and Nesterov(2023)]{doikov2021optimization}
Nikita Doikov and Yurii Nesterov.
\newblock Gradient regularization of {N}ewton method with bregman distances.
\newblock \emph{Mathematical Programming}, pages 1--25, 2023.

\bibitem[Doikov and Richt{\'a}rik(2018)]{RBCN}
Nikita Doikov and Peter Richt{\'a}rik.
\newblock Randomized block cubic {N}ewton method.
\newblock In Jennifer Dy and Andreas Krause, editors, \emph{The 35th \icml}, volume~80 of \emph{Proceedings of Machine Learning Research}, pages 1290--1298, Stockholmsmässan, Stockholm Sweden, 10--15 Jul 2018. PMLR.
\newblock URL \url{http://proceedings.mlr.press/v80/doikov18a.html}.

\bibitem[Gower and Mello(2012)]{gower2012new}
Robert Gower and Margarida Mello.
\newblock A new framework for the computation of hessians.
\newblock \emph{Optimization Methods and Software}, 27\penalty0 (2):\penalty0 251--273, 2012.

\bibitem[Gower and Richt{\'a}rik(2015)]{gower2015randomized}
Robert Gower and Peter Richt{\'a}rik.
\newblock Randomized iterative methods for linear systems.
\newblock \emph{SIAM Journal on Matrix Analysis and Applications}, 36\penalty0 (4):\penalty0 1660--1690, 2015.

\bibitem[Gower et~al.(2019)Gower, Kovalev, Lieder, and Richt{\'a}rik]{RSN}
Robert Gower, Dmitry Kovalev, Felix Lieder, and Peter Richt{\'a}rik.
\newblock {RSN:} randomized subspace {N}ewton.
\newblock In H.~Wallach, H.~Larochelle, A.~Beygelzimer, F.~d'Alch\'{e} Buc, E.~Fox, and R.~Garnett, editors, \emph{\neurips 32}, pages 616--625. Curran Associates, Inc., 2019.
\newblock URL \url{http://papers.nips.cc/paper/8351-rsn-randomized-subspace-newton.pdf}.

\bibitem[Gower et~al.(2020)Gower, Schmidt, Bach, and Richt{\'a}rik]{gower2020variance}
Robert Gower, Mark Schmidt, Francis Bach, and Peter Richt{\'a}rik.
\newblock Variance-reduced methods for machine learning.
\newblock \emph{Proceedings of the IEEE}, 108\penalty0 (11):\penalty0 1968--1983, 2020.

\bibitem[Griewank(1981)]{griewank1981modification}
Andreas Griewank.
\newblock The modification of {N}ewton's method for unconstrained optimization by bounding cubic terms.
\newblock Technical report, Technical report NA/12, 1981.

\bibitem[Hanzely et~al.(2020)Hanzely, Doikov, Nesterov, and Richt{\'a}rik]{hanzely2020stochastic}
Filip Hanzely, Nikita Doikov, Yurii Nesterov, and Peter Richt{\'a}rik.
\newblock Stochastic subspace cubic {N}ewton method.
\newblock In \emph{\icml}, pages 4027--4038. PMLR, 2020.

\bibitem[Hanzely et~al.(2022)Hanzely, Kamzolov, Pasechnyuk, Gasnikov, Richt{\'a}rik, and Tak\'{a}\v{c}]{hanzely2022damped}
Slavom\'{i}r Hanzely, Dmitry Kamzolov, Dmitry Pasechnyuk, Alexander Gasnikov, Peter Richt{\'a}rik, and Martin Tak\'{a}\v{c}.
\newblock A damped {N}ewton method achieves global $\mathcal{O}(k^{-2})$ and local quadratic convergence rate.
\newblock \emph{\neurips}, 35:\penalty0 25320--25334, 2022.

\bibitem[Jarre and Toint(2016)]{jarre2016simple}
Florian Jarre and Philippe Toint.
\newblock Simple examples for the failure of {N}ewton’s method with line search for strictly convex minimization.
\newblock \emph{\matprog}, 158\penalty0 (1):\penalty0 23--34, 2016.

\bibitem[Kantorovich(1948)]{kantorovich1948functional}
Leonid Kantorovich.
\newblock Functional analysis and applied mathematics.
\newblock \emph{Uspekhi Matematicheskikh Nauk}, 3\penalty0 (6):\penalty0 89--185, 1948.

\bibitem[Karimireddy et~al.(2018)Karimireddy, Stich, and Jaggi]{KSJ-Newton2018}
Sai Karimireddy, Sebastian Stich, and Martin Jaggi.
\newblock Global linear convergence of {N}ewton’s method without strong-convexity or lipschitz gradients.
\newblock \emph{\arxiv:1806:0041}, 2018.

\bibitem[Luo et~al.(2016)Luo, Agarwal, Cesa{-}Bianchi, and Langford]{luo2016efficient}
Haipeng Luo, Alekh Agarwal, Nicolo Cesa{-}Bianchi, and John Langford.
\newblock Efficient second order online learning by sketching.
\newblock \emph{\neurips}, 29, 2016.

\bibitem[Mascarenhas(2007)]{mascarenhas2007divergence}
Walter Mascarenhas.
\newblock On the divergence of line search methods.
\newblock \emph{Computational \& Applied Mathematics}, 26\penalty0 (1):\penalty0 129--169, 2007.

\bibitem[Mishchenko(2021)]{mishchenko2021regularized}
Konstantin Mishchenko.
\newblock Regularized {N}ewton method with global $\mathcal{O}(1/k^2)$ convergence.
\newblock \emph{\arxiv:2112.02089}, 2021.

\bibitem[Mor{\'e}(1978)]{more1978levenberg}
Jorge Mor{\'e}.
\newblock The {L}evenberg-{M}arquardt algorithm: Implementation and theory.
\newblock In \emph{Numerical Analysis}, pages 105--116. Springer, 1978.

\bibitem[Nesterov and Nemirovski(1994)]{nesterov1994interior}
Yurii Nesterov and Arkadi Nemirovski.
\newblock \emph{Interior-Point Polynomial Algorithms in Convex Programming}.
\newblock SIAM, 1994.

\bibitem[Nesterov and Polyak(2006)]{nesterov2006cubic}
Yurii Nesterov and Boris Polyak.
\newblock Cubic regularization of {N}ewton method and its global performance.
\newblock \emph{\matprog}, 108\penalty0 (1):\penalty0 177--205, 2006.

\bibitem[Nesterov et~al.(2018)]{nesterov2018lectures}
Yurii Nesterov et~al.
\newblock \emph{Lectures on convex optimization}, volume 137.
\newblock Springer, 2018.

\bibitem[Newton(1687)]{Newton}
Isaac Newton.
\newblock \emph{Philosophiae Naturalis Principia Mathematica}.
\newblock Jussu Societatis Regiae ac Typis Josephi Streater, 1687.

\bibitem[Polyak(2009)]{polyak2009regularized}
Roman Polyak.
\newblock Regularized {N}ewton method for unconstrained {C}onvex optimization.
\newblock \emph{\matprog}, 120\penalty0 (1):\penalty0 125--145, 2009.

\bibitem[Qu et~al.(2016)Qu, Richt{\'a}rik, Tak\'{a}\v{c}, and Fercoq]{SDNA}
Zheng Qu, Peter Richt{\'a}rik, Martin Tak\'{a}\v{c}, and Olivier Fercoq.
\newblock {SDNA:} stochastic dual {N}ewton ascent for empirical risk minimization.
\newblock In \emph{The 33rd \icml}, pages 1823--1832, 2016.

\bibitem[Raphson(1697)]{Raphson}
Joseph Raphson.
\newblock \emph{Analysis Aequationum Universalis Seu Ad Aequationes Algebraicas Resolvendas Methodus Generalis \& Expedita, Ex Nova Infinitarum Serierum Methodo, Deducta Ac Demonstrata}.
\newblock Th. Braddyll, 1697.

\bibitem[Rodomanov and Nesterov(2021)]{rodomanov2021greedy}
Anton Rodomanov and Yurii Nesterov.
\newblock Greedy quasi-{N}ewton methods with explicit superlinear convergence.
\newblock \emph{\siam}, 31\penalty0 (1):\penalty0 785--811, 2021.

\bibitem[Simpson(1740)]{Simpson}
Thomas Simpson.
\newblock \emph{Essays on Several Curious and Useful Subjects, in Speculative and Mix'd Mathematicks. Illustrated by a Variety of Examples.}
\newblock Printed by H. Woodfall, jun. for J. Nourse, at the Lamb without Temple-Bar, 1740.

\bibitem[Ypma(1995)]{ypma1995historical}
Tjalling Ypma.
\newblock Historical development of the {N}ewton--{R}aphson method.
\newblock \emph{SIAM Review}, 37\penalty0 (4):\penalty0 531--551, 1995.

\end{thebibliography}

\clearpage	
\onecolumn
\part*{Appendix} \label{p:appendix}
	\appendix

\section{Table of frequently used notation}
\begin{table}[h!]
	\centering
    \begin{threeparttable}        
	\caption[Summary of frequently used notation in \Cref{sec:sgn}]{Summary of frequently used notation}
	\label{tab:notation}
	\begin{tabular}{|c|l|}
        \hline 
        \multicolumn{2}{|c|}{{\bf General} }\\
        \hline
		$\mathbf A^\dagger$ & Moore-Penrose pseudoinverse of $\mathbf A$\\
		$\normM{\cdot}{op} $ & Operator norm\\
		$\normM{\cdot}x $ & Local norm at $x$\\
		$\normMd{\cdot}x $ & Local dual norm at $x$\\
		$x, x_+, x^k \in \Rd$ & Iterates\\
		$y \in \Rd$ & Virtual iterate (for analysis only)\\
		$h, h' \in \Rd$ & Difference between consecutive iterates\\
		$\als k$ & \sgn{} Stepsize\\
		\hline
        \multicolumn{2}{|c|}{{\bf Function specific} }\\
        \hline
		$d$ & Dimension of problem\\
		$f: \Rd \rightarrow \mathbb{R}$ & Loss function \\
		$\modelS {\cdot, x} $ & Upperbound on $f$ based on gradient and Hessian in $x$\\
		$\opt$, $\fopt$ & Optimal model and optimal function value\\
		$\level$ & Set of models with a functional value less than $x^0$\\
		$R, D, D_2$ & Diameter of $\level$\\
		$\Lstandard, \Lsemi$ & Self-concordance and semi-strong self-concordance constants\\
		$\Lalg$ & Smoothness estimate, affects stepsize of \sgn{}\\
		$\Lrel, \murel$ & Relative smoothness and relative convexity constants\\
		\hline
        \multicolumn{2}{|c|}{{\bf Sketching} }\\
        \hline
		$\gS, \hS, \normMS h x$  & Gradient, Hessian, local norm in range $\s$, resp.\\
		$\s \in \mathbb{R}^{d \times \tau(\s)}$ & Randomized sketching matrix\\
		$\tau(\s)$ & Dimension of randomized sketching matrix\\
		$\tau$ & Fixed dimension constraint on $\s$\\
		$\Ls$ & Self-concordance constant in range of $\s$\\
		$\p$ & Projection matrix on subspace $\s$ w.r.t. local norm at $x$\\         
		$\rho(x)$ & Condition numbers of expected scaled projection matrix $\E{\alpha \p}$\\
		$\rho$ & Lower bound on condition numbers $\rho(x)$\\
        \hline
	\end{tabular}
    \end{threeparttable}
\end{table}

\section{Self-concordance overview} \label{sec:concordances}
Self-concordance \citep{nesterov1994interior} is a variant of the smoothness assumption expressed in local norms.
\begin{definition} \label{def:self-concordance}
	Convex function $f$ with continuous first, second, and third derivatives is called \textit{self-concordant}  if
	\begin{equation}
		|D^3 f(x)[h]^3| \leq \Lstandard\norm{h}_{x}^3, \quad \forall x,h\in \R^d,
	\end{equation}
	where for any integer $p\geq 1$, by $D^p f(x)[h]^p \eqdef D^p f(x)[h,\ldots,h]$ we denote the $p$--th order directional derivative\footnote{For example, $D^1 f(x)[h] =\langle\nabla  f(x),h\rangle$ and $D^2 f(x)[h]^2 = \la \nabla^2 f(x) h, h \ra $.} of $f$ at $x\in \R^d$ along direction $h\in \R^d$.
\end{definition}
This assumption corresponds to a big class of optimization methods called interior-point methods \citep{nesterov1994interior}, it implies the uniqueness of the solution of the lower bounded function \citep[Theorem 5.1.16]{nesterov2018lectures}.

Similarly to Lipschitz smoothness, self-concordance implies an upper bound on the function value; however, in local norms.
\begin{equation} 
	f(y) - f(x) \leq \ip{\g(x)} {y-x} + \frac1 2 \normM {y-x} x ^2 + \frac\Lstandard 6 \normM {y-x} x ^3, \qquad \forall x,y \in \R^d.
\end{equation}    	

Note that this definition matches the definition of self-concordance in sketched subspaces (\Cref{def:scs}) with $\s = \mI$, and consequently $\Ls \leq \Lstandard$. 

Going beyond self-concordance, \citet{rodomanov2021greedy} introduced a stronger version of the self-concordance assumption.
\begin{definition}	\label{def:strong-self-concordance}
	Twice differentiable convex function, $f\in C^2$,  is called \textit{strongly self-concordant} if
	\begin{equation}
		\label{eq:strong-self-concordance}
		\nabla^2 f(y)-\nabla^2 f(x) \preceq \Lstrongly \norm{y-x}_{z} \nabla^2 f(w), \quad \forall y,x,z,w \in \R^d .
	\end{equation} 
\end{definition}

In this paper, we are working with the class of semi-strong self-concordant functions \citep{hanzely2022damped}
\begin{equation}
    \norm{\nabla^2 f(y)-\nabla^2 f(x)}_{op} \leq \Lsemi \norm{y-x}_{x} , \quad \forall y,x \in \R^d,
\end{equation}
which is analogous to standard second-order smoothness
\begin{equation} \label{eq:intro_L2-smooth}
    \norm{ \nabla^2 f(x) - \nabla^2 f(y)} \leq L_2\norm{x-y}.
\end{equation}

All of the mentioned self-concordance variants are affine-invariant and their respective classes satisfy \citep{hanzely2022damped}
\begin{gather*}
	\textit{strong self-concordance} \subseteq \textit{semi-strong self-concordance}
	\subseteq \textit{self-concordance}.
\end{gather*}
Also, for a fixed strongly self-concordant function $f$ and smallest such  $\Lstandard, \Lsemi, \Lstrongly$ holds $\Lstandard \leq \Lsemi \leq \Lstrongly$ \citep{hanzely2022damped}.

All notions of self-concordance are closely related to the standard convexity and smoothness; \citet{rodomanov2021greedy} shows that strong self-concordance follows from function $L_2$--Lipschitz continuous Hessian and strong convexity.
\begin{proposition} \citep[Example 4.1]{rodomanov2021greedy} \label{le:sscf}
	Let $\mathbf H: \R^d \rightarrow \R^d$ be a self-adjoint positive definite operator. Suppose there exist $\mu>0$ and $L_2 \geq 0$ such that the function $f$ is $\mu$--strongly convex and its Hessian is $L_2$--Lipschitz continuous \eqref{eq:intro_L2-smooth} with respect to the norm $\norm{\cdot}_{\mathbf H}$. Then $f$ is strongly self-concordant with constant $\Lstrongly=\frac{L_2}{\mu^{3 / 2}}$. 
\end{proposition}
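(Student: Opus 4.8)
The plan is to work entirely at the level of Hessians, translating both hypotheses into semidefinite inequalities and then chaining them. Since $f$ is twice differentiable, $\mu$--strong convexity with respect to $\norm{y-x}_{\mathbf{H}}$ is equivalent to the curvature bound $\h(w) \succeq \mu \mathbf{H}$ for every $w \in \Rd$. The $L_2$--Lipschitz continuity of the Hessian with respect to $\norm{\cdot}_{\mathbf{H}}$ means $\normM{\h(y)-\h(x)}{op} \le L_2 \norm{y-x}_{\mathbf{H}}$, where the operator norm is the one induced by the $\mathbf{H}$--geometry (output measured in the dual norm, exactly as in \Cref{def:semi-self-concordance}). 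The elementary fact I would isolate is that for a symmetric matrix $\mathbf{M}$ this operator norm equals $\lambda_{\max}$ of $\mathbf{H}^{-1/2}\mathbf{M}\mathbf{H}^{-1/2}$ in absolute value, so the bound unpacks into the two-sided sandwich $-c\,\mathbf{H} \preceq \mathbf{M} \preceq c\,\mathbf{H}$ with the nonnegative scalar $c = L_2\norm{y-x}_{\mathbf{H}}$. Applied to $\mathbf{M}=\h(y)-\h(x)$, this yields the upper estimate $\h(y)-\h(x) \preceq L_2 \norm{y-x}_{\mathbf{H}}\, \mathbf{H}$.

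Next I would invoke strong convexity twice to replace $\mathbf{H}$ and $\norm{\cdot}_{\mathbf{H}}$ by the quantities appearing in \Cref{def:strong-self-concordance}. Evaluating $\h(z) \succeq \mu \mathbf{H}$ on the vector $y-x$ gives $\normsM{y-x}{z} \ge \mu \norm{y-x}_{\mathbf{H}}^2$, i.e. $\norm{y-x}_{\mathbf{H}} \le \mu^{-1/2}\normM{y-x}{z}$; and $\h(w) \succeq \mu \mathbf{H}$ rearranges to $\mathbf{H} \preceq \mu^{-1}\h(w)$. Substituting both into the upper estimate, and using that the nonnegative factor $L_2\norm{y-x}_{\mathbf{H}} \ge 0$ preserves the semidefinite ordering when multiplying $\mathbf{H} \preceq \mu^{-1}\h(w)$, I obtain
\[
\h(y)-\h(x) \;\preceq\; L_2\norm{y-x}_{\mathbf{H}}\,\mathbf{H} \;\preceq\; \frac{L_2}{\mu^{1/2}}\normM{y-x}{z}\cdot \frac{1}{\mu}\,\h(w) \;=\; \frac{L_2}{\mu^{3/2}}\normM{y-x}{z}\,\h(w),
\]
which is precisely the defining inequality of strong self-concordance with constant $\Lstrongly = L_2/\mu^{3/2}$, valid for all $x,y,z,w \in \Rd$.

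The main obstacle I anticipate is not the chaining, which is routine, but making the operator-norm step airtight: I must confirm that the Lipschitz hypothesis is stated in the $\mathbf{H}$--induced operator norm compatible with the quadratic-form (sandwich) bound, rather than some incompatible matrix norm, and that for symmetric $\mathbf{M}$ this norm indeed coincides with $\max_{h\ne 0}|\la\mathbf{M}h,h\ra|/\norm{h}_{\mathbf{H}}^2$. This is exactly the convention of \Cref{def:semi-self-concordance}, so the identification is legitimate, but it is the one place where a careless reading could derail the argument. A secondary point worth one line is the direction of the inequalities: \Cref{def:strong-self-concordance} is one-sided ($\preceq$), so only the upper half of the sandwich is needed, and the nonnegativity of $c$ guarantees that the semidefinite ordering survives the substitution.
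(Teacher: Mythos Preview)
The paper does not supply its own proof of this proposition; it is quoted verbatim from \citet[Example 4.1]{rodomanov2021greedy} and used as background in \Cref{sec:concordances}. So there is nothing in the paper to compare against directly.

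That said, your argument is correct and is essentially the proof one finds in the cited source. The three ingredients---(i) reading the $\mathbf H$--operator-norm bound on $\h(y)-\h(x)$ as the two-sided semidefinite sandwich $-c\,\mathbf H \preceq \h(y)-\h(x) \preceq c\,\mathbf H$ with $c=L_2\norm{y-x}_{\mathbf H}$, (ii) using $\h(z)\succeq \mu\mathbf H$ to convert $\norm{y-x}_{\mathbf H}$ into $\mu^{-1/2}\normM{y-x}{z}$, and (iii) using $\h(w)\succeq \mu\mathbf H$ to convert $\mathbf H$ into $\mu^{-1}\h(w)$---are exactly the standard chaining, and the constant $L_2/\mu^{3/2}$ drops out as you show. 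Your caveat about the operator norm being the $\mathbf H$--induced one (so that the sandwich identification is valid for symmetric $\mathbf M$) is the right place to be careful, and your justification via $\mathbf H^{-1/2}\mathbf M\mathbf H^{-1/2}$ is the clean way to see it.
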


\section{Additional experiments \& implementation details}

In \Cref{fig:vsacd} we compare \sgn{} and Accelerated Coordinate Descent on small-scale experiments.

We use a comparison framework from \citep{hanzely2020stochastic}, including implementations of \sscn{}, Coordinate Descent, and Accelerated Coordinate Descent.

Experiments are implemented in Python 3.6.9 and run on a workstation with 48 CPUs Intel(R) Xeon(R) Gold 6246 CPU @ 3.30GHz. Total training time was less than $10$ hours. Source code and instructions are included in supplementary materials. As we fixed a random seed, therefore experiments are fully reproducible.

\begin{figure*}
	\centering
	\includegraphics[width=\figsizeap]{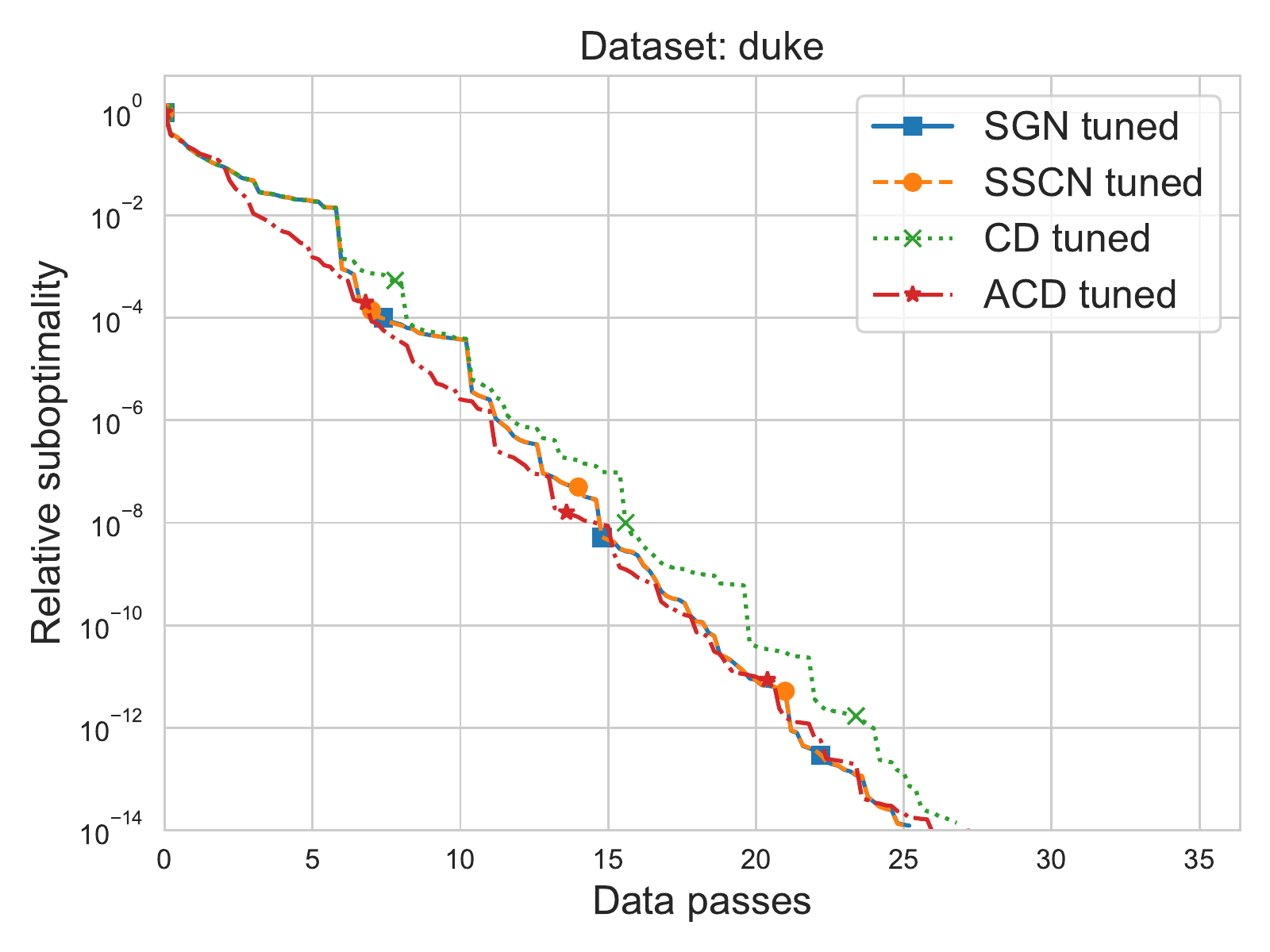}   
	\includegraphics[width=\figsizeap]{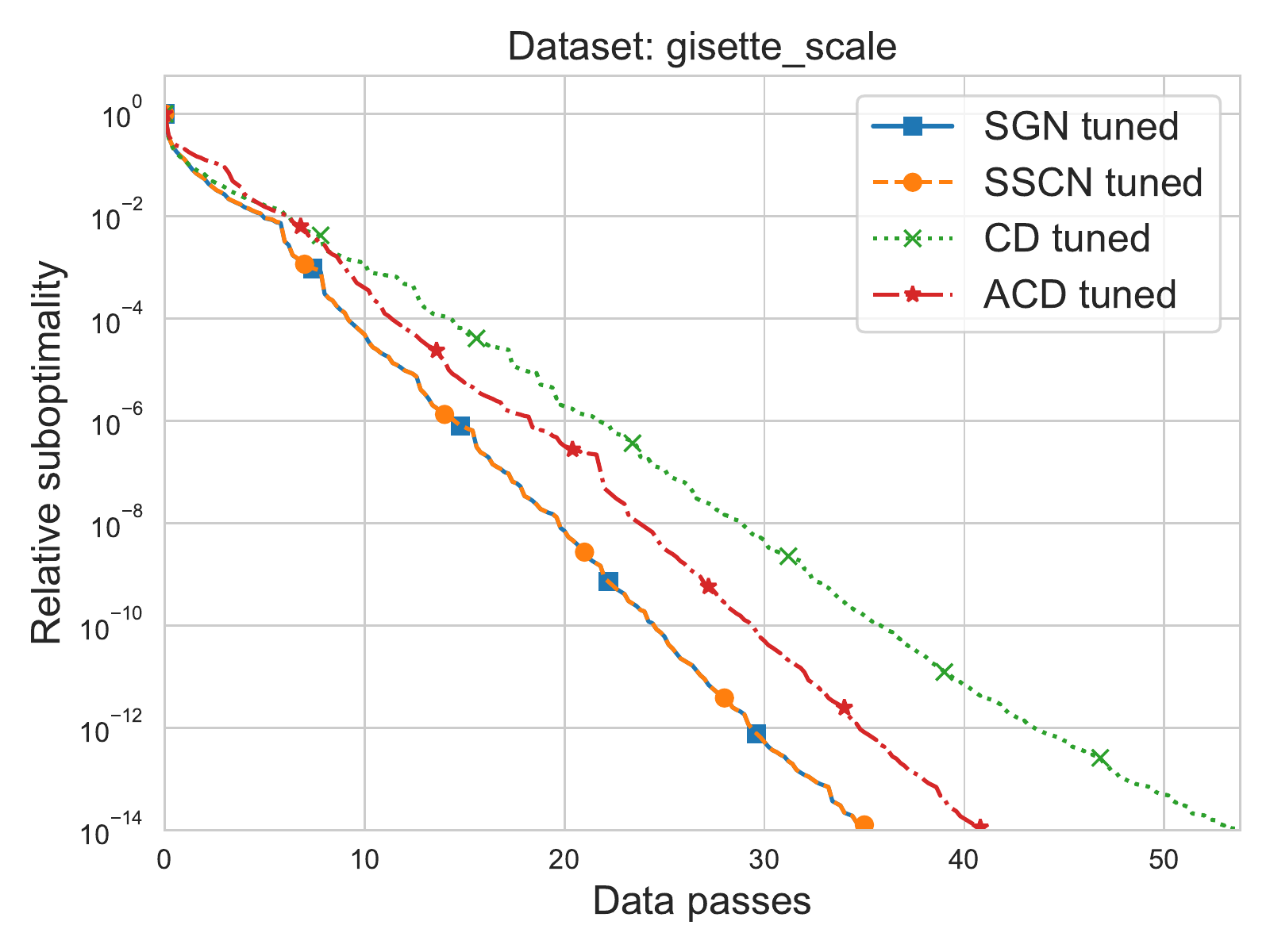}   
	\includegraphics[width=\figsizeap]{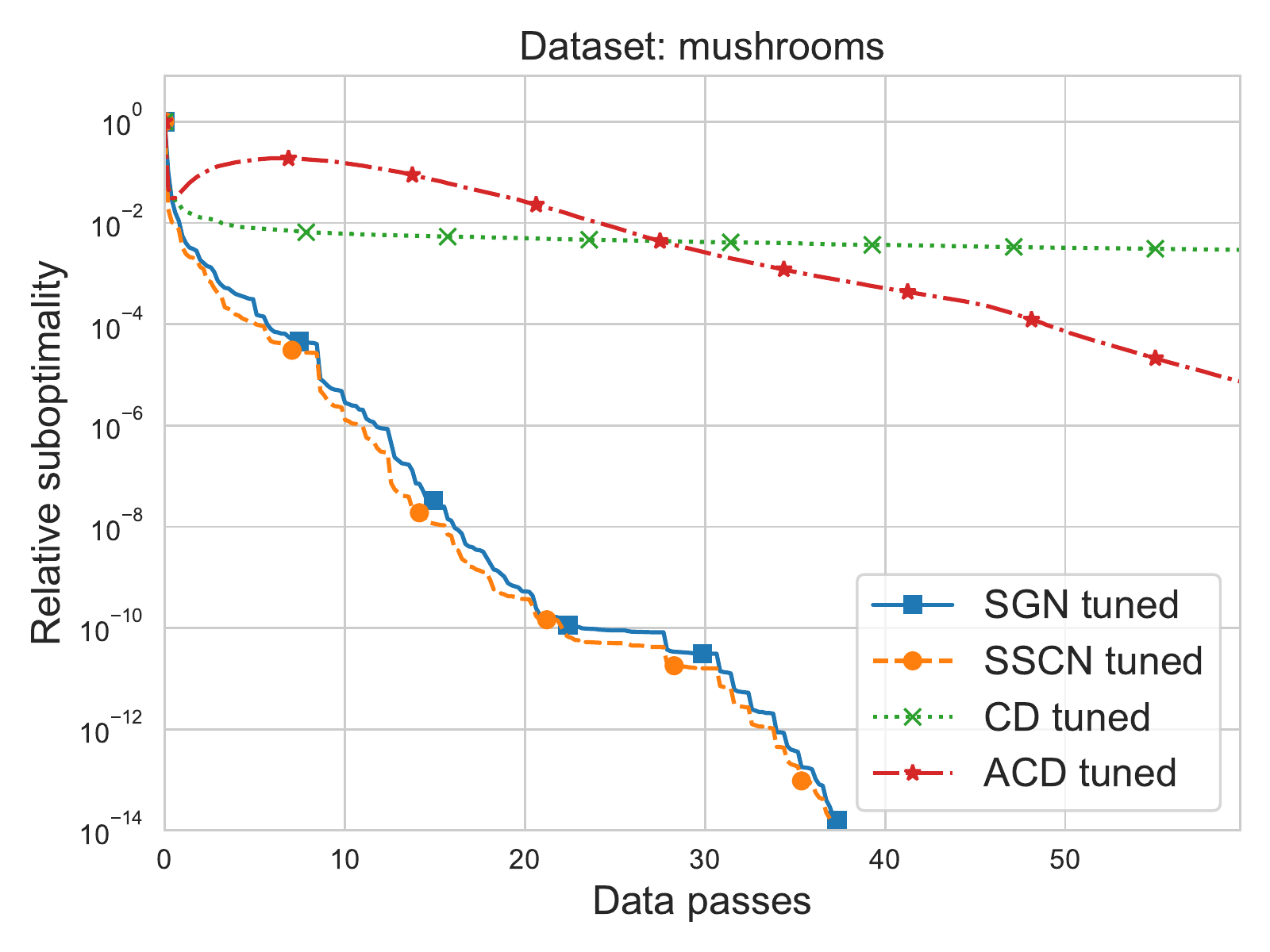}   
	\includegraphics[width=\figsizeap]{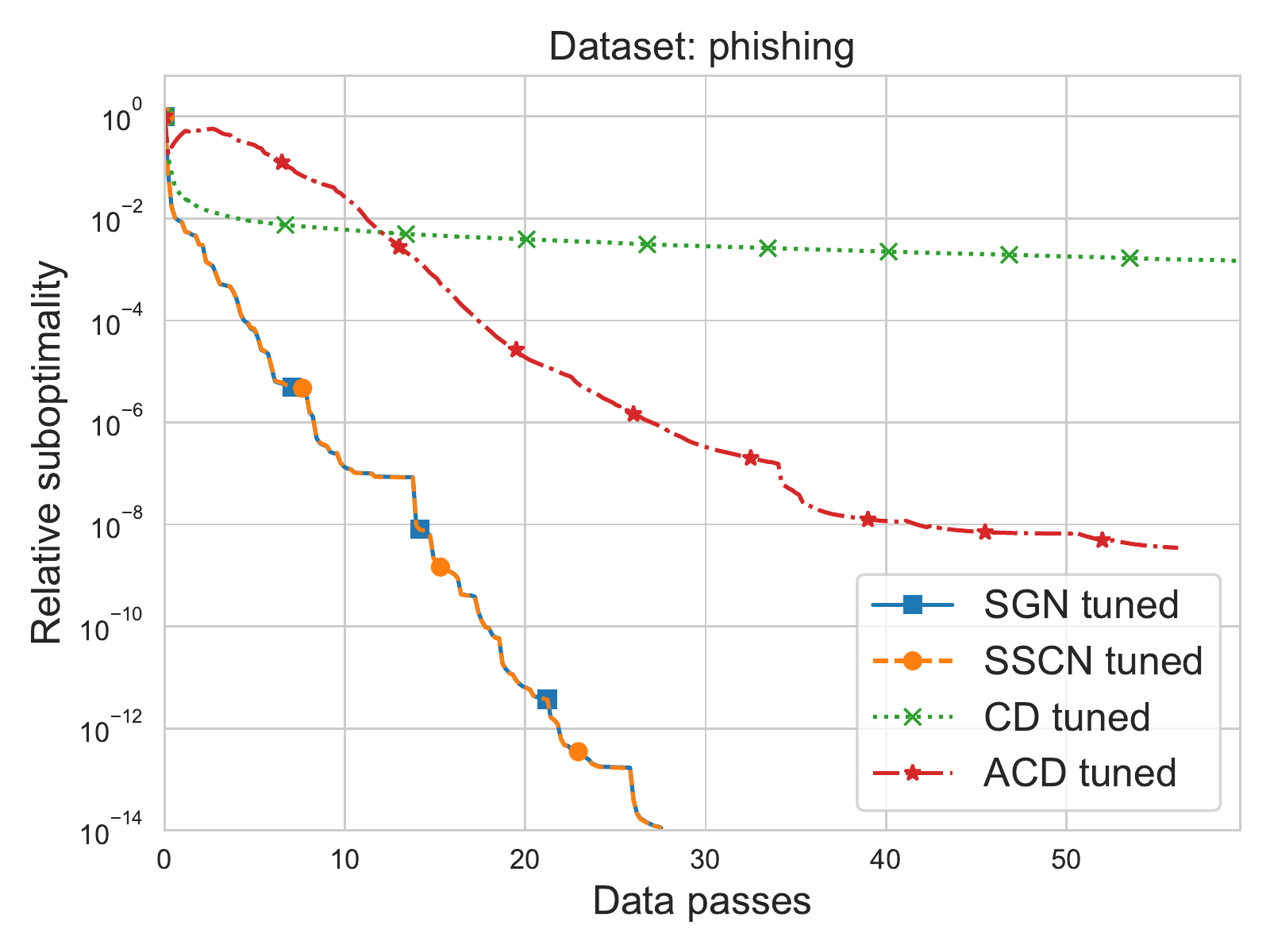}   
	\includegraphics[width=\figsizeap]{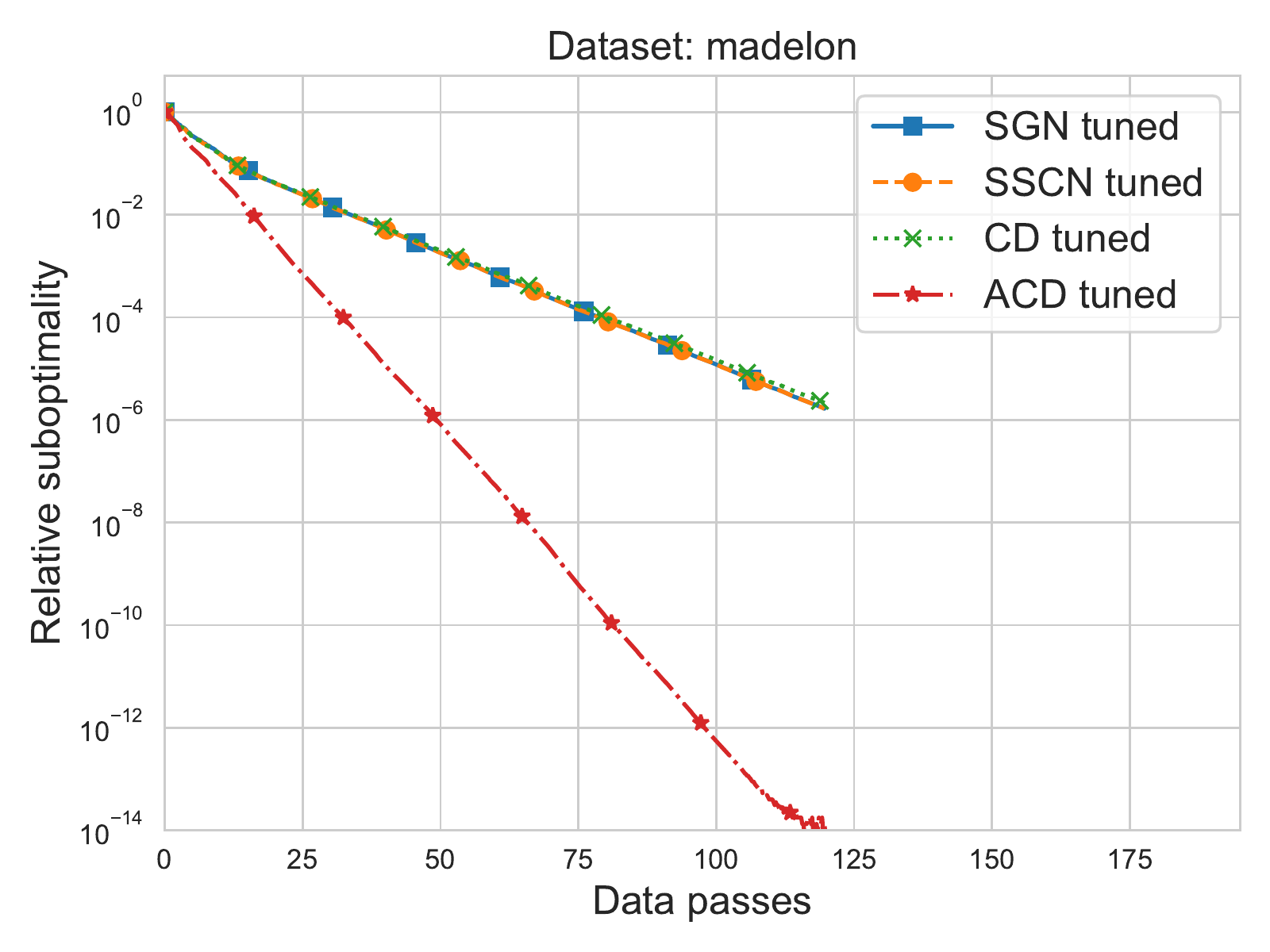}   
	\caption[Comparison of \sgn{} and first-order methods]{Comparison of \sscn{}, \sgn{}, \cd{} and \acd{} on logistic regression on \libsvm{} datasets for sketch matrices $\s$ of rank one. We fine-tune all algorithms for smoothness parameters.}
	\label{fig:vsacd}
    \vspace{\vspacefig}
\end{figure*}

\section{Local linear convergence limit} \label{sec:limits}
Similarly to \aicn{} \citep{hanzely2022damped}, we can show that one step decreases the gradient norm quadratically. In our case, the quadratic decrease is the sketched subspace.
\begin{lemma} \label{le:local_step_subspace}
    For $\Lsemi$--semi-strong self-concordant function $f:\R^d\to\R$ and parameter choice $\Lalg \geq\Lsemi$, one step of \sgn{} has quadratic decrease in $\Range{\sk}$,
    \begin{equation} \label{eq:local_step_subspace}
        \normMSdk {\gSk(x^{k+1})} {x^k} \leq \Lalg \als k^2\normsMSdk{\gSk(x^k) }{x^k}.
    \end{equation}
\end{lemma}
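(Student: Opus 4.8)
\textbf{Proof plan for \Cref{le:local_step_subspace}.}

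The plan is to mimic the classical argument that a full-step Newton method contracts the gradient norm quadratically, but carried out entirely inside the sketched subspace $\Range{\sk}$ and in the local norm $\normM\cdot{x^k}$. First I would write the subspace gradient at the new iterate using the fundamental theorem of calculus along the segment from $x^k$ to $x^{k+1}$, namely
\begin{equation*}
    \gSk(x^{k+1}) = \gSk(x^k) + \int_0^1 \st_k \h(x^k + t h^k)\, h^k \, dt,
\end{equation*}
where $h^k = x^{k+1}-x^k = -\als k \sk [\hSk(x^k)]^\dagger \gSk(x^k)$ is the \sgn{} step \eqref{eq:sgn_aicn}. The key idea is to add and subtract the subspace Hessian at $x^k$: using the definition of $h^k$ we have $\st_k \h(x^k) h^k = \hSk(x^k) h^k = -\als k \gSk(x^k)$ (on $\Range{\h(x^k)}$, which is fine since $\g(x^k)\in\Range{\h(x^k)}$). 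Hence
\begin{equation*}
    \gSk(x^{k+1}) = (1-\als k)\gSk(x^k) + \int_0^1 \st_k\left(\h(x^k+th^k) - \h(x^k)\right) h^k\, dt .
\end{equation*}

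Next I would bound each of the two terms in the dual local norm $\normMSdk\cdot{x^k}$. The second term is controlled by semi-strong self-concordance (\Cref{def:semi-self-concordance}): $\norm{\h(x^k+th^k)-\h(x^k)}_{op}\le \Lsemi \norm{th^k}_{x^k} = t\Lsemi\norm{h^k}_{x^k}$, so integrating gives a $\tfrac{\Lsemi}{2}\norm{h^k}_{x^k}\cdot\normMSdk{\cdot}{x^k}$-type bound, and via \eqref{eq:transition-primdual} we have $\norm{h^k}_{x^k}=\als k\normMSdk{\gSk(x^k)}{x^k}$. For the first term I would use the formula for $\als k$ from \eqref{eq:sgn_alpha}: the defining quadratic $\Lalg \normMSdk{\gSk(x^k)}{x^k}\als k^2 + \als k - 1 = 0$ rearranges exactly to $1-\als k = \Lalg\als k^2\normMSdk{\gSk(x^k)}{x^k}$, which converts the linear-in-$\gSk(x^k)$ leftover into the desired quadratic form. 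Combining, and using $\Lalg\ge\Lsemi$ together with $\als k\le 1$ to absorb the $\tfrac{\Lsemi}{2}\als k^2$ contribution into $\Lalg\als k^2$, yields
\begin{equation*}
    \normMSdk{\gSk(x^{k+1})}{x^k} \le \left(\Lalg\als k^2 + \tfrac{\Lsemi}{2}\als k^2\right)\normsMSdk{\gSk(x^k)}{x^k},
\end{equation*}
and I would then note the constant can be tightened to exactly $\Lalg\als k^2$ as stated by a slightly more careful split (or accept the harmless factor if the paper's constant is in fact $\tfrac32\Lalg$); I expect the paper's version follows \aicn{} \citep{hanzely2022damped} where the integral remainder is handled so that the $\Lsemi/2$ piece and the $1-\als k$ piece together give precisely $\Lalg\als k^2$.

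The main obstacle I anticipate is the bookkeeping with the pseudoinverse and the range condition: everything must be done on $\Range{\h(x^k)}$ (equivalently on the subspace where $\hSk(x^k)$ is invertible), so I need $\gSk(x^k)\in\Range{\hSk(x^k)}$ to legitimately write $\hSk(x^k)[\hSk(x^k)]^\dagger\gSk(x^k)=\gSk(x^k)$; this is exactly guaranteed by the hypothesis $\g(x^k)\in\Range{\h(x^k)}$ combined with the structure of $\st_k\h(x^k)\sk$. A secondary subtlety is that the inequality is stated in the \emph{old} norm $\normM\cdot{x^k}$ at both iterates, so no norm-change (Dikin ellipsoid / stability) argument is needed here — which actually simplifies matters relative to the usual self-concordant Newton analysis, and is presumably why the authors phrase it this way before later converting to the $x^{k+1}$-norm where needed.
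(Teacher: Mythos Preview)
Your approach is essentially identical to the paper's: decompose $\gSk(x^{k+1})$ into a Hessian-error piece (bounded via semi-strong self-concordance by $\tfrac{\Lsemi}{2}\als k^2\normsMSdk{\gSk(x^k)}{x^k}$) plus $(1-\als k)\gSk(x^k)$, then use the stepsize identity to merge the two. The only slip is the factor of $\tfrac12$ in the defining quadratic: from \eqref{eq:sgn_alpha} the correct relation is $1-\als k=\tfrac{\Lalg}{2}\als k^2\normMSdk{\gSk(x^k)}{x^k}$, not $\Lalg\als k^2\normMSdk{\gSk(x^k)}{x^k}$. With this fix your two contributions sum to $\bigl(\tfrac{\Lalg}{2}+\tfrac{\Lsemi}{2}\bigr)\als k^2\normsMSdk{\gSk(x^k)}{x^k}\le \Lalg\als k^2\normsMSdk{\gSk(x^k)}{x^k}$ using $\Lsemi\le\Lalg$, which is exactly the stated constant --- no $\tfrac32$ factor appears, resolving your uncertainty.
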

Nevertheless, this is insufficient for superlinear local convergence; we can achieve a linear rate at best.
We can illustrate this on an edge case where $f:\R^d \to \R$ is a quadratic function: self-concordance assumption holds with $\Ls=0$ and as $\als k \xrightarrow {\Ls \rightarrow 0} 1$, \sgn{} stepsize becomes $1$ and \sgn{} simplifies to subspace Newton method. Unfortunately, the subspace \newton{} method has just linear local convergence \citep{RSN}.

\section{Algorithm comparisons}
For readers convenience, we include pseudocodes of the most relevant baseline algorithms: Exact Newton Descent (\Cref{alg:end}), \rsn{} (\Cref{alg:rsn}), \sscn{} (\Cref{alg:sscn}), \aicn{} (\Cref{alg:aicn}).

\begin{figure}[h]
	\begin{minipage}[t]{0.46\textwidth}
		\begin{algorithm}[H]
			\caption{Exact Newton Descent \citep{KSJ-Newton2018}} \label{alg:end}
			\begin{algorithmic}
				\State \textbf{Requires:} Initial point $x^0 \in \Rd$, $c$--stability bound $\sigma >c >0$
				\For {$k=0,1,2\dots$}
				\vspace{4.2mm}
				\State $x^{k+1} = x^k - {\color{blue}\frac 1 {\sigma}} \left[\h(x^k)\right]^{\dagger} \g(x^k)$
				\EndFor
			\end{algorithmic}
		\end{algorithm}
	\end{minipage}
	\hfill
	\begin{minipage}[t]{0.53\textwidth}
		\begin{algorithm}[H]
			\caption{\rsn{}: Randomized Subspace Newton \citep{RSN}} \label{alg:rsn}
			\begin{algorithmic}
				\State \textbf{Requires:} Initial point $x^0 \in \Rd$, distribution of sketches $\cD$, relative smoothness constant $L_{\text{rel}} >0$
				\For {$k=0,1,2\dots$}
				\State {\color{mydarkgreen}Sample $\s_k \sim \cD$}
				\State $x^{k+1} = x^k - {\color{blue} \frac 1 {\Lrel}} {\color{mydarkgreen}\sk} \left[\nabla^2_{{\color{mydarkgreen}\sk}}f(x^k)\right]^{\dagger} \nabla_{{\color{mydarkgreen}\sk}}f(x^k)$
				\EndFor
			\end{algorithmic}
		\end{algorithm}
	\end{minipage}
	\begin{minipage}[t]{0.50\textwidth}
		\begin{algorithm} [H]
			\caption{\aicn{}: Affine-Invariant Cubic Newton \citep{hanzely2022damped}} \label{alg:aicn}
			\begin{algorithmic}
				\State \textbf{Requires:} Initial point $x^0 \in \Rd$, estimate of semi-strong self-concordance $\Lalg \geq \Lsemi>0$    
				\State \,
				\For {$k=0,1,2\dots$}
				\State {\color{blue}$\als k = \frac {-1 +\sqrt{1+2 \Lalg \norm{\g(x^k)}_{x^k}^* }}{\Lalg \norm{ \g(x^k)}_{x^k}^* }$}
				\State $x^{k+1} = x^k - {\color{blue}\als k} \left[\h(x^k)\right]^{-1} \g(x^k)$\footnote {Equival.,  $x^{k+1} = x^k - {\color{brown}\argmin_{h \in \Rd} T(x^k, h)}$, \\ for {\color{brown}$T(x,h) \eqdef \la \g(x), h \ra + \frac 12 \normsM h x + \frac {\Lalg}6 \normM h x ^3$}.}
				\EndFor
			\end{algorithmic}
		\end{algorithm}
	\end{minipage}
	\hfill
	\begin{minipage}[t]{0.49\textwidth}
		\begin{algorithm} [H]
			\caption{\sscn{}: Stochastic Subspace Cubic Newton \citep{hanzely2020stochastic}} \label{alg:sscn}
			\begin{algorithmic}
				\State \textbf{Requires:} Initial point $x^0 \in \Rd$, distribution of random matrices $\cD$, Lipschitzness of Hessian constant $\Ls >0$
				\For {$k=0,1,2\dots$}
				\vspace{2.1mm}
				\State {\color{mydarkgreen}Sample $\sk \sim \cD$}
				\vspace{2.1mm}
				\State $x^{k+1} = x^k - {\color{mydarkgreen}\sk} {\color{brown}\argmin_{h \in \Rd} \hat T_{\color{mydarkgreen}\sk}(x^k, h)}$\footnote{\vspace{1mm}\newline for {\color{brown}$\hat T_{\color{mydarkgreen}\s}(x, h) = \la \g(x), {\color{mydarkgreen}\s} h \ra + \frac 12 \normsM {{\color{mydarkgreen}\s} h} x + \frac {\Ls}6 \normM {{\color{mydarkgreen}\s} h}{} ^3$}.}
				\EndFor
			\end{algorithmic}
		\end{algorithm}
	\end{minipage}
	\caption[Pseudocodes of algorithms related to \sgn{}]{Pseudocodes of algorithms related to \sgn{}. We highlight the stepsizes of the Newton method in blue, subspace sketching in green, and regularized Newton step in brown.}
\end{figure}

\section{Is $\okd$ convergence rate possible for \sscn{}?} \label{sec:sscn_okd}

    \citet{hanzely2020stochastic} proposed \sscn{}, the \sap{} version of the cubic Newton method. While it intuitively seems that directly combining these approaches could directly lead to the desired global rate of $\mathcal{O}(k^{-2})$, achieving such a rate demands an extremely careful choice of assumptions and distribution of sketch matrices. Unfortunately, the authors of SSCN encountered a slight mismatch, resulting in a slower rate of $\mathcal{O}(k^{-1})$.

    We can present a slight modification of the \sscn{} algorithm to showcase what modification is needed to achieve the desired global rate of $\mathcal{O}(k^{-2})$.

    For functions $f$ that satisfy
    \begin{equation} \label{eq:sscn_modified_fclass}       
    \left | f(x+ \mathbf S h)-f(x) - \langle \nabla f(x), \mathbf S h\rangle  - \frac 12 \Vert \mathbf S h \Vert ^2 _ 2 \right |
    \leq \frac {L'}6 \Vert \mathbf S h \Vert ^3 _{2}, \qquad \forall \mathbf S \sim \mathcal D, \forall h \in \R^{\tau(\s)}, 
    \end{equation}
    the sequence of iterates is defined as
    \begin{equation} \label{eq:sscn_modified_update}
    x^{k+1} = \argmin_{ h\in \R^d} \left \{ f(x^k) + \langle \nabla f(x), \mathbf S h \rangle + \frac 12 \Vert \mathbf S h \Vert ^2 _2 + \frac {L'_{est}}6 \Vert \mathbf S h \Vert ^3 _{2}  \right\},
    \end{equation}
    for constant $L'_{est} \geq L'$ and sketch $\mathbf S$ sampled from distribution $\mathcal D$, s.t.
    \begin{equation}
    \mathbb E_{\mathbf S \sim \mathcal D} \left[\mathbf P_2 \stackrel{\text{def}}= \mathbf S (\mathbf S ^\top \mathbf S )^{\dagger} \mathbf S^\top \right ] = \frac \tau d \mathbf I,
    \qquad \text{which implies} \quad \mathbb E [\Vert \mathbf P_2 h \Vert^2_2] = \frac \tau d\Vert \mathbf h \Vert^2_2,
    \end{equation}
    achieves global convex convergence rate $\mathcal O(k^{-2}).$ However, while the left-hand side of \eqref{eq:semistrong_approx} is the second-order Taylor expansion, the left-hand side of \eqref{eq:sscn_modified_fclass} is not and we currently do not know which class of functions $f$ satisfies this requirement.

    In the case of the original SSCN, there is a discrepancy between usage of $l_2$ norms and local norms in the left-hand side of \eqref{eq:sscn_modified_fclass} and update rule \eqref{eq:sscn_modified_update}. This causes an extra quadratic term to appear in \eqref{eq:global_step}, resulting in the slower $\mathcal O(k^{-1})$ rate.

\section{Construction of the sketch distribution} \label{ssec:sketch_distribution}
Here we demonstrate that the distribution of sketching matrices satisfying \Cref{as:projection_direction} can be obtained from sketches with $l_2$--unbiased projection (which were used in \citep{hanzely2020stochastic}).

\begin{lemma} [Construction of sketch matrix $\s$] 
\label{le:setting_s}
    If we have a sketch matrix distribution $\tilde {\mathcal D} $ so that a projection on $\Range{\mathbf M}, \mathbf M \sim \tilde{\mathcal D}$ is unbiased in $l_2$ norms,
    \begin{equation}
        \mathbb E_{\mathbf M \sim \tilde {\cD} } \left[ \mathbf M \left( \mathbf M ^ \top \mathbf M \right) ^ \dagger \mathbf M ^ \top \right] = \frac \tau d \mI,
    \end{equation}
    then distribution $\cD$ of $\s$ defined as $\s^\top \eqdef \mathbf M \left[ \h(x) \right] ^{-1/2}$ (for $\mathbf M \sim \tilde {\cD}$) satisfy 
    \begin{equation}
        \mathbb E_{\s \sim \cD} \left[ \p \right] = \frac \tau d \mI.
    \end{equation}
\end{lemma}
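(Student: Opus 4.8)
The plan is a direct computation: substitute the definition of $\s$ into formula \eqref{eq:px} for $\p$, telescope the Hessian factors, and then pull the deterministic pieces out of the expectation so that the hypothesis on $\mathbf M$ can be applied.

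First I would set $\mH \eqdef \h(x)$; since $f$ is assumed to have positive definite Hessian, $\mH^{1/2}$ and $\mH^{-1/2}$ exist and are symmetric positive definite. Reading the definition $\st = \mathbf M[\h(x)]^{-1/2}$ with $\mathbf M$ the thin sketch, so that $\s = \mH^{-1/2}\mathbf M$, I would evaluate the three factors appearing in $\p = \s(\st\mH\s)^\dagger\st\mH$. The inner Gram matrix telescopes, $\st\mH\s = \mathbf M^\top\mH^{-1/2}\mH\mH^{-1/2}\mathbf M = \mathbf M^\top\mathbf M$, so its Moore--Penrose pseudoinverse is simply $(\mathbf M^\top\mathbf M)^\dagger$ (the pseudoinverse is a function of the matrix alone, so no extra hypothesis is needed here); the right factor is $\st\mH = \mathbf M^\top\mH^{1/2}$; and the left factor is $\s = \mH^{-1/2}\mathbf M$. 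Multiplying these yields the clean identity
\[
\p \;=\; \s\left(\st\mH\s\right)^\dagger\st\mH \;=\; \mH^{-1/2}\,\mathbf M\left(\mathbf M^\top\mathbf M\right)^\dagger\mathbf M^\top\,\mH^{1/2}.
\]

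Then I would take the expectation over $\mathbf M \sim \tilde{\cD}$. The outer factors $\mH^{\pm 1/2}$ do not depend on $\mathbf M$, so they come out of the expectation, and the $l_2$-unbiasedness hypothesis turns the middle term into $\afrac{\tau}{d}\mI$:
\[
\mathbb E_{\s\sim\cD}\!\left[\p\right] \;=\; \mH^{-1/2}\,\mathbb E_{\mathbf M\sim\tilde{\cD}}\!\left[\mathbf M\left(\mathbf M^\top\mathbf M\right)^\dagger\mathbf M^\top\right]\mH^{1/2} \;=\; \mH^{-1/2}\left(\afrac{\tau}{d}\mI\right)\mH^{1/2} \;=\; \afrac{\tau}{d}\mI,
\]
using $\mH^{-1/2}\mH^{1/2} = \mI$ at the last step. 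This is exactly Assumption~\ref{as:projection_direction}.

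There is no genuinely hard step; the only point requiring care is the bookkeeping. One must check that $\st\mH\s$ really collapses to a Gram matrix of $\mathbf M$ alone — this is precisely what the $\mH^{-1/2}$ rescaling in the definition of $\s$ is engineered for — and that the shape of $\mathbf M$ and the transpose convention in the definition of $\s$ are read consistently with the shape of $\mathbf M$ in the $l_2$-projection hypothesis (the argument is verbatim with $\mathbf M^\top\mathbf M$ replaced by $\mathbf M\mathbf M^\top$ in the corresponding spots). If one additionally wants the matching $\mathbb E[\tau(\s)] = \tau$ from \Cref{le:tau_exp}, it suffices to note $\tau(\s) = \operatorname{rank}(\s) = \operatorname{rank}(\mathbf M)$ because $\mH^{-1/2}$ is invertible.
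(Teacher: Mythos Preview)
Your proof is correct and follows exactly the same route as the paper's own argument: substitute $\s=\mH^{-1/2}\mathbf M$ into \eqref{eq:px}, observe that $\st\mH\s$ collapses to $\mathbf M^\top\mathbf M$, pull the deterministic $\mH^{\pm 1/2}$ factors outside the expectation, and apply the $l_2$-unbiasedness hypothesis. Your write-up is in fact more explicit than the paper's two-line computation, and your remark about the transpose convention correctly flags (and resolves) the only bookkeeping subtlety.
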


\section{Proofs}

\subsection{Basic facts}
For any vectors $a, b\in\R^d$ and scalar $\nu>0$, Young's inequality states that
\begin{align}
	2\<a, b>\le \nu\|a\|^2+ \frac{1}{\nu}\|b\|^2. \label{eq:young}
\end{align}
Moreover, we have
\begin{align}
	\|a+b\|^2
	\le 2\|a\|^2 + 2\|b\|^2.\label{eq:a_plus_b}
\end{align}
More generally, for a set of $m$ vectors $a_1,\dotsc, a_m$ with arbitrary $m$, it holds
\begin{align}
	\Bigl\|\frac{1}{m}\sum_{i=1}^m a_i\Bigr\|^2
	\le \frac{1}{m}\sum_{i=1}^m\|a_i\|^2. \label{eq:jensen_for_sq_norms}
\end{align}
For any random vector $X$ we have
\begin{align}
	\ec{\|X\|^2}=\|\ec{X}\|^2 + \ec{\|X-\ec{X}\|^2}. \label{eq:rand_vec_sq_norm}
\end{align}
If $f$ is $L_f$-smooth, then for any $x,y\in\R^d$, it is satisfied
\begin{equation}
	f(y)
	\le f(x) + \<\nabla f(x), y-x> + \frac{L_f}{2}\|y-x\|^2. \label{eq:smooth_func_vals}
\end{equation}

Finally, for $L_f$-smooth and convex function $f:\R^d\to\R$, it holds
\begin{equation}
	f(x) \leq f(y)+\langle\nabla f(x), x-y\rangle-\frac{1}{2 L_f}\|\nabla f(x)-\nabla f(y)\|^{2} \label{eq:smooth_conv}.
\end{equation}

\begin{proposition} \label{pr:three_point} [Three-point identity]
	For any $u,v,w \in \R^d$, any $f$ with its Bregman divergence $D_f(x,y) = f(x)-f(y) - \langle \nabla f(y), x-y\rangle $, it holds
	\begin{align*}
		\langle \nabla f(u) - \nabla f(v), w-v \rangle
		&= D_f (v,u) + D_f(w,v) - D_f(w,u).
	\end{align*}
\end{proposition}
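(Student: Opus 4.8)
The plan is to prove the identity by a direct algebraic expansion of the three Bregman divergences on the right-hand side. The statement is purely formal: it holds for any differentiable $f$, and neither convexity nor any smoothness/self-concordance property is used, so there is nothing to ``set up'' — the argument is a one-line substitution and cancellation.

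First I would write out the three terms from the definition of $D_f$:
\begin{align*}
D_f(v,u) &= f(v) - f(u) - \langle \nabla f(u), v-u\rangle,\\
D_f(w,v) &= f(w) - f(v) - \langle \nabla f(v), w-v\rangle,\\
D_f(w,u) &= f(w) - f(u) - \langle \nabla f(u), w-u\rangle.
\end{align*}
Then I would form $D_f(v,u) + D_f(w,v) - D_f(w,u)$ and note that every function-value term cancels: $f(v)$ occurs with opposite signs in the first two lines, $f(w)$ with opposite signs in the last two, and $f(u)$ with a $-$ in the first line and a $+$ in the third. What is left is $-\langle \nabla f(u), v-u\rangle - \langle \nabla f(v), w-v\rangle + \langle \nabla f(u), w-u\rangle$. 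Collecting the two terms carrying $\nabla f(u)$ and using bilinearity together with $(w-u)-(v-u) = w-v$, they combine into $\langle \nabla f(u), w-v\rangle$; adding the remaining $-\langle \nabla f(v), w-v\rangle$ yields $\langle \nabla f(u) - \nabla f(v), w-v\rangle$, which is exactly the left-hand side.

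There is no real obstacle here — the only thing to be careful about is the sign bookkeeping when cancelling the six function-value terms and when regrouping the gradient terms. Since the identity is needed later only as an algebraic tool (e.g. in manipulating descent inequalities), this short computation suffices and no auxiliary lemma from the earlier part of the paper is required.
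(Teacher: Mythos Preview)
Your proof is correct; the paper itself states this proposition in its ``Basic facts'' section without giving any proof, so your direct expansion-and-cancellation argument is exactly the (implicit) elementary verification one would supply.
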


\begin{lemma}[Arithmetic mean -- Geometric mean inequality]
	For $c\geq0$ we have 
	\begin{equation} 
		1+ c = \frac {1 + (1+2c)}{2} \stackrel{AG} \geq \sqrt{1+2c}.
	\end{equation}
\end{lemma}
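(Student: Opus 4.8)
The plan is to apply the classical two-term arithmetic--geometric mean inequality to the numbers $a=1$ and $b=1+2c$, both of which are nonnegative since $c\geq 0$. Their arithmetic mean is $\frac{a+b}{2}=\frac{1+(1+2c)}{2}=1+c$, while their geometric mean is $\sqrt{ab}=\sqrt{1\cdot(1+2c)}=\sqrt{1+2c}$. The inequality $\frac{a+b}{2}\geq\sqrt{ab}$ then yields exactly the claim, and the intermediate equality $1+c=\frac{1+(1+2c)}{2}$ is just the definition of the arithmetic mean, matching the displayed chain in the statement.

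If one prefers not to invoke AM--GM as a black box, the inequality admits a direct one-line verification: for $c\geq 0$ both $1+c$ and $\sqrt{1+2c}$ are positive, so it suffices to compare their squares; since $(1+c)^2 = 1+2c+c^2 \geq 1+2c$ because $c^2\geq 0$, taking square roots (which is monotone on nonnegative reals) gives $1+c\geq\sqrt{1+2c}$. Either route is immediate, so there is no real obstacle; the only point worth flagging is that squaring is equivalence-preserving here precisely because both sides are nonnegative on the stated domain $c\geq 0$, which is why the restriction $c\geq 0$ appears in the hypothesis.
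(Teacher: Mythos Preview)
Your proposal is correct and matches the paper's approach exactly: the paper's ``proof'' is essentially contained in the displayed chain itself, applying the two-term AM--GM inequality to $a=1$ and $b=1+2c$, which is precisely what you do. The additional squaring argument you give is a valid alternative but is not present in the paper.
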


\subsection{Proof of \Cref{th:three}}
\begin{proof}
	Because $\g(x^k) \in \Range{\h(x^k)}$, it holds $\h(x^k)  [\h(x^k)]^\dagger \g(x^k) = \g(x^k)$.
	Updates \eqref{eq:sgn_aicn} and \eqref{eq:sgn_sap} are equivalent as
	\begin{eqnarray*}
		\pk k [\h(x^k)]^\dagger \g(x^k)
		&=& \sk \left( \sk^\top \h (x^k) \sk \right)^\dagger \sk^\top \h(x^k)  [\h(x^k)]^\dagger \g(x^k)\\
		&=& \sk \left( \sk^\top \h (x^k) \sk \right)^\dagger \sk^\top \g(x^k)\\
		&=&\sk [\hSk(x^k)]^\dagger \gSk(x^k).
	\end{eqnarray*}
	Taking gradient of $\modelSk{x^k, h}$ w.r.t. $h$ and setting it to $0$ yields that for solution $h^*$ holds
	\begin{gather} \label{eq:equiv_grad}
		\gSk(x^k) + \hSk(x^k) h^* + \frac \Lalg 2 \normM {h^*} {x^k, \sk} \hSk(x^k)h^* =0,
	\end{gather}
	which after rearranging is
	\begin{gather} \label{eq:equiv_hsol}
		h^* = - \left( 1+ \frac \Lalg 2 \normM {h^*} {x^k, \sk} \right)^ {-1} \left[\hSk(x^k) \right]^\dagger \gSk(x^k),
	\end{gather}
	thus solution of cubical regularization in local norms \eqref{eq:sgn_Ts} has form of Newton method with stepsize $\als k = \left( 1+ \frac \Lalg 2 \normM {h^*} {x^k, \sk} \right)^ {-1}$. We are left to show that this $\als k$ is equivalent to \eqref{eq:sgn_alpha}.
	
	Substitute $h^*$ from \eqref{eq:equiv_hsol} to \eqref{eq:equiv_grad} and $\als k = \left( 1+ \frac \Lalg 2 \normM {h^*} {x^k, \sk} \right)^ {-1}$ and then use $\h(x^k) [\h(x^k)]^\dagger \g(x^k) = \g(x^k)$, to get
	\begin{eqnarray*}
		0
		&=&\gSk(x^k) + \hSk(x^k) \left( - \als k \left[\hSk(x^k) \right]^\dagger \gSk(x^k) \right) \\
		&& \quad + \frac \Lalg 2 \left( \als k \normMSdk {\gSk(x^k)} {x^k}\right) \hSk(x^k) \left(- \als k \left[\hSk(x^k) \right]^\dagger \gSk(x^k) \right)\\
		&=& \left( 1 - \als k - \frac \Lalg 2 \als k^2 \normMSdk {\gSk(x^k)} {x^k} \right) \gSk(x^k).
	\end{eqnarray*}
	To conclude the proof, observe that stepsize $\als k$ from \eqref{eq:sgn_alpha} is set to be the positive root of polynomial $1 - \als k - \frac \Lalg 2 \als k^2 \normMSdk {\gSk(x^k)} {x^k}=0$. Because $\als k$ corresponds to $h^*$ such that $\left . \nabla_h \modelSk{x^k, h} \right|_{h*} =0$, vector $h^*$ is the minimizer of the regularized model $\modelSk{x^k, h}$ in \eqref{eq:sgn_reg}. On the other hand, the equation \eqref{eq:equiv_hsol} shows that $h^*$ has the form of Newton method with the stepsize \eqref{eq:sgn_aicn}.
 
 This concludes the equivalence of \eqref{eq:sgn_reg}, \eqref{eq:sgn_aicn}, and \eqref{eq:sgn_sap}.
\end{proof}

\subsection{Proof of \Cref{le:sketch_equiv}}
\begin{proof}
	For arbitrary square matrix $\mathbf M$ pseudoinverse guarantee $\mathbf M^ \dagger \mathbf M \mathbf M^ \dagger = \mathbf M^\dagger$. Applying this to $\mathbf M \leftarrow \left( \st \h (x) \s \right)$ yields $\la \p y, \p z \ra _ {\h(x)} = \la \p y, z \ra _{\h(x)} y,z \in \Rd $. Thus, $\p$ is really projection matrix w.r.t. $\normM \cdot x$. \\  
\end{proof}

\subsection{Proof of \Cref{le:tau_exp}}
\begin{proof}
	We follow proof of \citep[Lemma 5.2]{hanzely2020stochastic}.
	Using definitions and the cyclic property of the matrix trace,
	\begin{eqnarray*}        
		\E{\tau(\s)}
		&=& \E{\Tr\left(\mI^{\tau(\s)}\right)}
		= \E{\Tr\left( \s^\top \h(x) \s \left( \s^\top \h(x) \s \right)^{\dagger} \right)}\\
		&=& \E{\Tr\left( \p \right)}
		= \Tr \left(\frac \tau d \mI^d\right)
		= \tau.
	\end{eqnarray*}
\end{proof}

\subsection{Proof of \Cref{le:projection_contract}}
\begin{proof} 
    Equalities in \eqref{eq:proj_h} and \eqref{eq:proj_g} follows from directly expanding the definitions of norms $\normsM \cdot x, \normsMd \cdot x$ and $\p$ and using property of pseudoinverse $\mathbf M = \mathbf M \mathbf M^\dagger \mathbf M$ and $\mathbf M^\dagger = \mathbf M^\dagger \mathbf M \mathbf M^\dagger$ (for $\mathbf M = \st \h (x) \s$) and that $h,g,\h(x)$ are deterministic.
    \begin{eqnarray*}
        \E{\normsM{\p  h} x }
        &=& \E{h^\top {{\color{blue}\p^\top}} \h(x) {{\color{blue}\p}}  h}\\
        &=& \E{h^\top {{\color{blue}\h(x) \s \left( \st \h (x) \s \right)^\dagger \st}} \h(x) {{\color{blue} \s \left( \st \h (x) \s \right)^\dagger \st \h(x)}}  h}\\
        &=& \E{h^\top \h(x) \s \left( \st \h (x) \s \right)^\dagger \st \h(x)  h}\\
        &=& \E{h^\top \h(x) \p  h}\\
        &=&  h ^\top \h(x) \E \p h\\
        &\stackrel{As.\ref{as:projection_direction}} =& \frac \tau d \normsM h x,
    \end{eqnarray*}
            
    \begin{align*}
        \E{\normsMd{\p ^\top g} x }
        &= \E{g^\top {{\color{blue}\p}} [\h(x)]^\dagger {{\color{blue}\p^\top}}  g}\\
        &= \E{g^\top {{\color{blue} \s \left( \st \h (x) \s \right)^\dagger \st \h(x)}} [\h(x)]^\dagger {{\color{blue} \h(x) \s \left( \st \h (x) \s \right)^\dagger \st}}  g}\\
        &= \E{g^\top \s \left( \st \h (x) \s \right)^\dagger \st \h(x) \s \left( \st \h (x) \s \right)^\dagger \st  g}\\
        &= \E{g^\top \s \left( \st \h (x) \s \right)^\dagger \st  g}\\
        &= \E{g^\top \s \left( \st \h (x) \s \right)^\dagger \st {{\color{blue} \h(x) [\h(x)]^\dagger}}  g} \qquad \text{if } g \in \Range{\h(x)}\\        
        &= \E{g^\top \p [\h(x)]^\dagger  g}\\
        &=  g ^\top \E \p \h(x) g\\
        &=  g ^\top \E \p [\h(x)]^\dagger g\\
        &\stackrel{As.\ref{as:projection_direction}} = \frac \tau d \normsMd g x.\\
    \end{align*}
\end{proof}

\subsection{Proof of \Cref{le:setting_s}}

\begin{proof}
	We have
	\begin{eqnarray*}
		\EE_{\s \sim \cD}\left[\p\right]
		&=& \left[\h(x)\right]^{-1/2} \EE_{\mathbf M \sim \tilde {\cD}} \left[ \mathbf M^\top \left( \mathbf M^\top \mathbf M \right)^\dagger \mathbf M \right] \left[\h(x)\right]^{1/2}\\
		&=& \left[\h(x)\right]^{-1/2} \frac \tau d \mI \left[\h(x)\right]^{1/2}
		= \frac \tau d \mI.
	\end{eqnarray*}
\end{proof}

\subsection{Proof of \Cref{le:one_step_dec}}
\begin{proof}
	For $h^k = x^{k+1}-x^k$, we can follow proof of \citep[Lemma 10]{hanzely2022damped},
	\begin{eqnarray*}
		  f(x^k)-f(x^{k+1})
		&\stackrel{\eqref{eq:semistrong_approx}} \geq& - \la \gS(x^k),h^k \ra  - \frac 12 \normsM {h^k} {x^k, \sk} - \frac \Lalg 6 \normM {h} {x^k, \sk}^3 \\
		&\stackrel{\eqref{eq:transition-primdual}} =& \als k \normsMSdk {\gSk(x^k)} {x^k} - \frac 12 \als k^2 \normsMSdk {\gSk(x^k)} {x^k} \\
		&& \qquad - \frac \Lalg 6 \als k^3 \normM {\gSk(x^k)} {x^k, \sk}^{*3} \\
		& =& \left(1 - \frac 12 \als k - \frac \Lalg 6 \als k^2 \normMSdk {\gSk(x^k)} {x^k} \right) \als k \normsMSdk {\gSk(x^k)} {x^k}\\
		& \geq& \frac 12 \als k \normsMSdk {\gSk(x^k)} {x^k}\\
		& \geq& \frac 1{2 \max \left \{\sqrt{\Lalg \normMSdk {\gSk(x^k)} {x^k} }, 2 \right\} } \normsMSdk {\gSk(x^k)} {x^k}.
	\end{eqnarray*}
\end{proof}

\subsection{Proof of \Cref{le:local_step_subspace}}
\begin{proof}
	We bound norm of $\gS(x^{k+1})$ using basic norm manipulation and triangle inequality as
	\begin{eqnarray*}
		 \normMSdk {\gSk(x^{k+1})} {x^k}
		&=& \normMSdk{\gSk(x^{k+1}) - \hSk (x^k)(x^{k+1}-x^k)  - \als k \gSk (x^k) }{x^k}\\
		&=& \left \Vert \gSk(x^{k+1}) - \gSk (x^{k}) - \hSk (x^k)(x^{k+1}-x^k) +\right.\\
        && \hspace{5cm} \left. + (1-\als k) \gSk (x^k) \right \Vert _{x^k, \sk}^*\\
		&\leq& \normMSdk{\gSk(x^{k+1}) - \gSk(x^{k}) - \hSk(x^k)(x^{k+1}-x^k)}{x^k} \\
		&& \qquad  + (1-\als k) \normMSdk{\g(x^k) }{x^k}.
	\end{eqnarray*}
	
	Using $\Lsemi$--semi-strong self-concordance, we can continue
	\begin{eqnarray*}
		\cdots &\leq& \normMSdk{\gSk(x^{k+1}) - \gSk(x^{k}) - \hSk(x^k)(x^{k+1}-x^k)}{x^k} \\
		&& \qquad  + (1-\als k) \normMSdk{\gS(x^k) }{x^k}  \\
		&\leq& \frac{\Lsemi}{2} \normsM{x^{k+1}-x^k} {x^k, \sk} + (1-\als k) \normMSdk{\gSk(x^k) }{x^k} \\
		&=& \frac{\Lsemi \als k^2}{2} \normsMSdk{\gSk(x^k)} {x^k} + (1-\als k) \normMSdk{\gSk(x^k) }{x^k} \\
		&=&  \left(\frac{\Lalg \als k^2}{2} \normMSd{\gSk(x^k)} {x^k} -\als k +1\right) \normMSdk{\gSk(x^k) }{x^k}\\
		&\stackrel{\eqref{eq:sgn_alpha}}{=}& \Lalg \als k^2\normsMSdk{\gSk(x^k) }{x^k}.
	\end{eqnarray*}
	The last equality holds because of the choice of $\als k$.
\end{proof}

\subsection{Technical lemmas}

\begin{lemma}[Arithmetic mean -- Geometric mean inequality]
	For $c\geq0$ we have 
	\begin{equation} \label{eq:AG}
		1+ c = \frac {1 + (1+2c)}{2} \stackrel{AG} \geq \sqrt{1+2c}.
	\end{equation}
\end{lemma}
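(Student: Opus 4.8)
The plan is to split the statement into its two halves --- the equality on the left and the inequality on the right --- and dispose of each by elementary means. For the equality, I would simply simplify: $\frac{1 + (1+2c)}{2} = \frac{2 + 2c}{2} = 1 + c$, which holds for every real $c$ and requires no hypothesis. This reduces the lemma to proving $1 + c \geq \sqrt{1+2c}$ for $c \geq 0$.

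For the inequality itself there are two equally short routes. The first is to invoke the two-variable arithmetic mean--geometric mean inequality $\frac{a+b}{2} \geq \sqrt{ab}$ with $a = 1$ and $b = 1+2c$; the hypothesis $c \geq 0$ ensures $b \geq 1 > 0$, so the geometric mean is well-defined, and we get $\frac{1+(1+2c)}{2} \geq \sqrt{1\cdot(1+2c)} = \sqrt{1+2c}$, which together with the identity above is exactly \eqref{eq:AG}. The second route, which avoids citing AM-GM as a black box, is to square: since $1+c \geq 0$ (because $c \geq 0$) and $\sqrt{1+2c} \geq 0$, the inequality $1+c \geq \sqrt{1+2c}$ is equivalent to $(1+c)^2 \geq 1+2c$, i.e.\ $1 + 2c + c^2 \geq 1 + 2c$, i.e.\ $c^2 \geq 0$, which is trivially true.

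There is essentially no obstacle here; the statement is a one-line sanity check used to bound a stepsize-type quantity elsewhere in the paper. The only point that warrants a moment's attention is the legitimacy of squaring: one should note that both sides of $1+c \geq \sqrt{1+2c}$ are nonnegative under $c \geq 0$, so squaring yields an equivalent (not merely implied) inequality. I would present the squaring argument as the main proof and, if desired, remark that it is just the instance $a=1$, $b=1+2c$ of AM-GM.
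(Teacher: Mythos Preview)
Your proposal is correct and takes essentially the same approach as the paper: the paper's proof is self-contained in the displayed line itself, applying the two-variable AM--GM inequality to $a=1$ and $b=1+2c$, exactly as in your first route. Your additional squaring argument is a fine alternative but not needed here.
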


\begin{lemma}[Jensen for square root]
	Function $f(x) = \sqrt x$ is concave, hence for $c\geq0$ we have 
	\begin{equation}\label{eq:jensen}
		\frac 1 {\sqrt 2} (\sqrt c+1) \leq \sqrt{c+1} \qquad \leq \sqrt c + 1.
	\end{equation}
\end{lemma}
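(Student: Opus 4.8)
The plan is to establish the two inequalities separately, each following from the concavity of $g(x) = \sqrt{x}$ on $[0,\infty)$ (equivalently, from elementary squaring combined with the AM--GM inequality \eqref{eq:AG}).

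For the right-hand inequality $\sqrt{c+1} \leq \sqrt{c} + 1$, I would observe that both sides are nonnegative and square them: the claim becomes $c + 1 \leq c + 2\sqrt{c} + 1$, i.e.\ $0 \leq 2\sqrt{c}$, which holds for every $c \geq 0$. Equivalently, this is subadditivity of $\sqrt{\cdot}$, valid because $g$ is concave with $g(0) = 0$.

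For the left-hand inequality $\tfrac{1}{\sqrt{2}}(\sqrt{c}+1) \leq \sqrt{c+1}$, I would apply Jensen's inequality (midpoint concavity) to $g$ at the points $c$ and $1$: concavity gives $g\!\left(\tfrac{c+1}{2}\right) \geq \tfrac{g(c)+g(1)}{2}$, that is $\sqrt{\tfrac{c+1}{2}} \geq \tfrac{\sqrt{c}+1}{2}$, and multiplying through by $\sqrt{2}$ yields exactly $\sqrt{c+1} \geq \tfrac{1}{\sqrt{2}}(\sqrt{c}+1)$. Alternatively, squaring reduces the claim to $c + 2\sqrt{c} + 1 \leq 2c + 2$, i.e.\ $2\sqrt{c} \leq c+1$, which is precisely AM--GM applied to $c$ and $1$ (the same statement as \eqref{eq:AG}).

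There is no genuine obstacle; the only point requiring care is that squaring preserves an inequality only when both sides are nonnegative, which is immediate here since $c \geq 0$ forces $\sqrt{c}$, $\sqrt{c+1}$, $\sqrt{c}+1$ and $\tfrac{1}{\sqrt{2}}(\sqrt{c}+1)$ all to be nonnegative. For completeness one can note that equality holds at $c = 1$ in the left inequality and at $c = 0$ in the right one.
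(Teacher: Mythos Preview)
Your proof is correct. The paper does not give an explicit proof of this lemma beyond the hint in its statement that it follows from concavity of $\sqrt{\cdot}$; your argument---Jensen at the midpoint for the left inequality (equivalently reducing by squaring to the AM--GM bound \eqref{eq:AG}) and subadditivity/squaring for the right---is exactly the intended justification.
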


\subsection{Proof of \Cref{le:global_step}}
\begin{proof}
	Denote 
	\begin{equation*}
		\Omega_\s (x, h') \eqdef f(x) + \la \g(x), \p h' \ra + \frac 12 \normsM {\p h'} x + \frac {\Lalg}6 \normM {\p h'} x ^3,
	\end{equation*}
	so that 
	\begin{equation*}
		\min_{h' \in \Rd} \Omega_\s (x,h') = \min_{h\in \mathbb{R}^{\tau(\s)}} \modelS{x,h}.
	\end{equation*}
	
	For arbitrary $y \in \Rd$ denote $h \eqdef y-x^k$. We can calculate
	\[f(x^{k+1})  \leq  \min_{h'\in \mathbb{R}^{\tau(\s)}} \modelS {x^k, h'} = \min_{h'' \in \Rd} \Omega_\s (x^k,h''),\]
	and
	\begin{eqnarray*}
		\efa \E{f(x^{k+1})}\\
		&\leq& \E{ \Omega_\s (x^k, h)} \\
		& =& f(x^k) + \frac \tau d \la \g(x^k), h \ra + \frac 12 \E{\normsM{\pk k h} {x^k} }  + \E{\frac \Lalg 6 \normM{\pk k h} {x^k} ^3}\\
		& \stackrel{\eqref{eq:proj_h}} \leq& f(x^k) + \frac \tau d \la \g(x^k), h \ra + \frac \tau {2d} \normsM h {x^k}  + \frac {\Lalg} 6 \frac \tau d \normM h {x^k} ^3\\
		& \stackrel{\eqref{eq:semistrong_approx}}\leq& f(x^k) + \frac \tau d \left( f(y)-f(x^k) + \frac {\Lsemi}6 \normM{y-x^k}{x^k}^3 \right)  + \frac {\Lalg} 6 \frac \tau d \normM h {x^k} ^3.
	\end{eqnarray*}
	
	In second to last inequality depends on the unbiasedness of projection $\p$, Assumption~\ref{as:projection_direction}. The last inequality follows from semi-strong self-concordance, \Cref{pr:semistrong_stoch} with $\s=\mI$.
\end{proof}

\subsection{Proof of \Cref{th:global_convergence}}
\begin{proof}
	Denote
	\begin{eqnarray}
		A_0 & \eqdef& \frac 43 \left(\frac d \tau \right)^3,\\
		A_k &\eqdef& A_0 + \sumin tk t^2 = A_0 -1 + \frac{k(k+1)(2k+1)}{6} \geq A_0 +\frac {k^3} 3, \\
		&& \qquad \text{...consequently } \quad  \sumin tk \frac {t^6}{A_t^2} \leq 9k, \\
		\eta_t & \eqdef& \frac d \tau \frac{(t+1)^2} {A_{t+1}} \qquad \text{implying } 1-\frac \tau d \eta_t = \frac {A_t} {A_{t+1}}.
	\end{eqnarray}
	Note that this choice of $A_0$ implies (as in \citep{hanzely2020stochastic})
	\begin{align}
		\eta_{t-1} 
		\leq \frac d \tau \frac {t^2}{A_0 + \frac {t^3} 3 }
		\leq \frac d \tau \sup_{t\in \N} \frac {t^2}{A_0 + \frac {t^3} 3 }
		\leq \frac d \tau \sup_{\zeta>0} \frac {\zeta^2} {A_0 + \frac {\zeta^3} 3}
		= 1
	\end{align}
	and $\eta_t \in [0,1]$.    
	Set $y \eqdef \eta_t \opt + (1-\eta_t) x^t$ in \Cref{le:global_step}. From convexity of $f$,
	\begin{eqnarray*}        
		\E{f(x^{t+1} | x^t)} 
		& \leq&  \left(1- \frac \tau d \right) f(x^t) + \frac \tau d \fopt \eta_t + \frac \tau d f(x^t) (1-\eta_t) \\
		& &\qquad + \frac \tau d \left( \frac {\max \Ls + \Lsemi}  6 \normM{x^t-\opt} {x^t} ^3 \eta_t^3 \right).
	\end{eqnarray*}    
	Denote $\delta_t \eqdef \E{f(x^t) -\fopt}$. Subtracting $\fopt$ from both sides and substituting $\eta_k$ yields
	\begin{align}        
		\delta_{t+1}
		& \leq  \frac {A_t} {A_{t+1}} \delta_t + \frac {\max \Ls + \Lsemi}  6 \normM{x^t-\opt} {x^t} ^3 \left( \frac d \tau \right)^2 \left(\frac {(t+1)^2}{A_{t+1}} \right)^3 .
	\end{align}
	Multiplying by $A_{t+1}$ and summing from from $t=0,\dots, k-1  $ yields
	\begin{align}        
		A_{k} \delta_{k}
		& \leq  A_0 \delta_0 + \frac {\max \Ls + \Lsemi} 6 \frac {d^2} {\tau^2} \sum_{t=0}^{k-1} \normM{x^t-\opt} {x^t} ^3 \frac {(t+1)^6}{A_{t+1}^2}.
	\end{align}    
	Using $\sup_{x\in \level} \normM{x-\opt} {x} \leq R$ we can simplify and shift summation indices,
	\begin{eqnarray}        
		A_{k} \delta_{k}
		& \leq & A_0 \delta_0 + \frac {\max \Ls + \Lsemi} 6 \frac {d^2} {\tau^2} D^3 \sumin tk \frac {t^6}{A_t^2}\\
		& \leq&  A_0 \delta_0 + \frac {\max \Ls + \Lsemi} 6 \frac {d^2} {\tau^2} D^3 9k,
	\end{eqnarray}
	and
	\begin{eqnarray}        
		\delta_{k}
		& \leq&  \frac {A_0 \delta_0}{A_k} + \frac {3(\max \Ls + \Lsemi) d^2 D^3 k} {2 \tau^2 A_k}\\
		& \leq & \frac {3A_0 \delta_0}{k^3} + \frac {9(\max \Ls + \Lsemi) d^2 D^3} {2 \tau^2 k^2},
	\end{eqnarray}
	which concludes the proof.
\end{proof}

\subsection{Proof of \Cref{th:local_linear}}
Before we start proof, we first state that for self-concordant functions (\Cref{def:self-concordance}) we can bound function value suboptimality by the norm of the gradient in the neighborhood of the solution.

\begin{proposition} \citep[Lemma D.3]{hanzely2020stochastic}
	\label{pr:sscn_lemmas}
	For any $\gamma>0$ and $x^k$ in neighborhood
	$x^k \in \left \{ x: \normMd {\g(x)} {x} < \frac 2{(1+\gamma^{-1})\Lstandard } \right \}$ for $\Lstandard$--self-concordant function $f:\R^d\to\R$, we can bound
	\begin{equation} \label{eq:sc_gamma}
		f(x^k)-\fopt \leq \frac 12 (1+ \gamma) \normsMd {\g(x^k)} {x^k}.
	\end{equation}
\end{proposition}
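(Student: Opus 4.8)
The plan is to reduce to the classical theory of standard ($2$-)self-concordant functions by an affine rescaling of $f$, invoke the known upper bound on the functional residual in terms of the Newton decrement, and finish with an elementary estimate of $\omega_*(t)\eqdef -t-\ln(1-t)$. To begin, note that the right-hand side of the claim is $\tfrac12(1+\gamma)\lambda(x^k)^2$, where $\lambda(x)\eqdef\normMd{\g(x)}{x}$ is the local Newton decrement, and set $g\eqdef\tfrac{\Lstandard^2}{4}f$ (so $\opt$ minimizes $g$ as well). A direct computation from \Cref{def:self-concordance} shows that $g$ is standard self-concordant: from $D^2g(x)[h]^2=\tfrac{\Lstandard^2}{4}D^2f(x)[h]^2$ we get $\normM{h}{g,x}=\tfrac{\Lstandard}{2}\normM{h}{x}$, hence $|D^3g(x)[h]^3|=\tfrac{\Lstandard^2}{4}|D^3f(x)[h]^3|\le\tfrac{\Lstandard^3}{4}\normM{h}{x}^3=2\normM{h}{g,x}^3$. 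The same scaling yields $g(x)-g(\opt)=\tfrac{\Lstandard^2}{4}(f(x)-\fopt)$ and, since $\nabla g=\tfrac{\Lstandard^2}{4}\nabla f$ and $\nabla^2 g=\tfrac{\Lstandard^2}{4}\nabla^2 f$, the Newton decrement of $g$ satisfies $\lambda_g(x)=\tfrac{\Lstandard}{2}\lambda(x)$. Consequently the hypothesis $\lambda(x^k)<\tfrac{2}{(1+\gamma^{-1})\Lstandard}=\tfrac{2\gamma}{(1+\gamma)\Lstandard}$ translates into $t\eqdef\lambda_g(x^k)<\tfrac{\gamma}{1+\gamma}<1$.

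Next I would invoke the classical residual bound for standard self-concordant functions (e.g.\ \citet{nesterov1994interior,nesterov2018lectures}): whenever $\lambda_g(x)<1$, the minimum of $g$ is attained and $g(x)-g(\opt)\le\omega_*(\lambda_g(x))$. It then remains to estimate $\omega_*$: for $0\le t<1$ one has $\omega_*(t)=\sum_{k\ge2}\tfrac{t^k}{k}\le\tfrac{t^2}{2}\sum_{k\ge0}t^k=\tfrac{t^2}{2(1-t)}$, and since $t<\tfrac{\gamma}{1+\gamma}$ forces $1-t>\tfrac{1}{1+\gamma}$, i.e.\ $\tfrac{1}{1-t}<1+\gamma$, we obtain $\omega_*(t)\le\tfrac{1+\gamma}{2}t^2$. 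Substituting back,
\[
    \tfrac{\Lstandard^2}{4}\,(f(x^k)-\fopt)=g(x^k)-g(\opt)\le\tfrac{1+\gamma}{2}\,t^2=\tfrac{1+\gamma}{2}\cdot\tfrac{\Lstandard^2}{4}\,\lambda(x^k)^2,
\]
and dividing by $\tfrac{\Lstandard^2}{4}$ gives $f(x^k)-\fopt\le\tfrac12(1+\gamma)\normsMd{\g(x^k)}{x^k}$, which is the claim.

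The only non-routine ingredient is the classical residual bound $g(x)-g(\opt)\le\omega_*(\lambda_g(x))$; everything else is bookkeeping with the rescaling plus a power-series estimate. For a self-contained derivation the main obstacle would be to reprove that bound: parametrize $\phi(s)\eqdef g(\opt+s(x-\opt))$, use the one-dimensional self-concordance inequality $|\phi'''(s)|\le 2\,\phi''(s)^{3/2}$ to integrate a bound on $\phi''$ and then on $\phi'$, and finally integrate $g(x)-g(\opt)=\int_0^1(1-s)\phi''(s)\,ds$; the delicate points there are establishing the existence of $\opt$ and controlling $\phi''$ uniformly in $s$. For the present purposes I would simply cite it, as is done via \citet{hanzely2020stochastic}.
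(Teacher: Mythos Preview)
Your argument is correct. Note, however, that the paper does not supply its own proof of this proposition: it is stated as a citation to \citet[Lemma D.3]{hanzely2020stochastic} and used as a black box in the proof of \Cref{th:local_linear}. Your rescaling $g=\tfrac{\Lstandard^2}{4}f$ to reduce to the standard $2$--self-concordant setting, followed by the classical residual bound $g(x)-g(\opt)\le\omega_*(\lambda_g(x))$ and the elementary estimate $\omega_*(t)\le\tfrac{t^2}{2(1-t)}<\tfrac{1+\gamma}{2}t^2$ under $t<\tfrac{\gamma}{1+\gamma}$, is exactly the natural route and matches how such results are typically derived in the self-concordance literature; there is nothing to correct.
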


\begin{proof}[Proof of \Cref{th:local_linear}]
    Note 
     \begin{eqnarray}     
         \normsMSdk {\gSk(x^k)} {x^k}
         &=& \g(x^k)^\top \s \left( \st \h(x) \s \right) ^\dagger \st \g(x^k) \nonumber\\
         &=& \g(x^k)^\top \s \left( \st \h(x) \s \right) ^\dagger \st \h(x) \s \left( \st \h(x) \s \right) ^\dagger \st \g(x^k) \nonumber\\
         &=& \g(x^k)^\top \s \left( \st \h(x) \s \right) ^\dagger \st \h(x)[\h(x)]^\dagger \h(x) \cdot \nonumber\\
         && \hspace{5cm } \cdot\s \left( \st \h(x) \s \right) ^\dagger \st \g(x^k) \nonumber\\
         &=& \normsMd{\pk k ^\top \g(x^k)} {x^k}  \label{eq:gs_to_proj}\\
         &\stackrel{\eqref{eq:proj_contr}}\leq& \normsMd{\g(x^k)} {x^k}, \nonumber
     \end{eqnarray}
     and for $\Lstandard$--self-concordant function $f$ and $\gamma>0$ in the neighborhood $x^k \in \left \{ x: \normMd {\g(x)} {x} < \frac 2{(1+\gamma^{-1})\Lstandard } \right \}$, we have
     \begin{equation}         
         \normMSdk {\gSk(x^k)} {x^k}
         \leq \normMd {\g(x^k)} {x^k}
         < \frac 2{(1+\gamma^{-1})\Lstandard}.
     \end{equation}

     From \Cref{eq:one_step_dec} we have 
    \begin{align}
        f(x^k)-f(x^{k+1}) 
        \geq \frac 1 {a_k} \normsMSdk {\gSk(x^k)} {x^k} 
        > \frac 1 {2b} \normsMSdk {\gSk(x^k)} {x^k} 
    \end{align}
    where 
    \begin{equation*}
    a_k 
    \eqdef 2 \max \left \{\sqrt{\Lalg \normMSdk {\gSk(x^k)} {x^k} } ,2 \right\}
    < 4 \max \left \{\sqrt{\frac \Lalg{2(1+\gamma^{-1})\Lstandard} } ,1 \right\} \quad \eqdef {2b}
    \end{equation*}
    
We can take the expectation and continue    
	\begin{eqnarray*}
		\E{f(x^k)-f(x^{k+1})} 
		& \stackrel{\eqref{eq:one_step_dec}} \geq& \E{\frac 1{2b} \normsMSd {\gSk(x^k)} {x^k}}\\
		&\stackrel{\eqref{eq:gs_to_proj}}=& \E{ \frac 1{2b} \normsMd {\pk k ^\top \g(x^k)} {x^k} } \\
		&\stackrel{\eqref{eq:proj_g}}=& \frac \tau {2bd} \normsMd {\g(x^k)} {x^k}\\
		&\stackrel{\eqref{eq:sc_gamma}}\geq& \frac {\tau} {bd(1+ \gamma)} (f(x^k)-\fopt).
	\end{eqnarray*}
	Hence 
	\[ \E{f(x^{k+1})-\fopt)} \leq \left( 1- \frac {\tau} {bd(1+\gamma)} \right) (f(x^k)-\fopt), \]
	and to finish the proof, we choose $\gamma=1$ and use tower property across iterates $x^0, x^1, \dots, x^k$.
 
 As semi-strong self-concordance is stronger than standard self-concordance (see \Cref{sec:concordances}) and $\Lsemi\geq \Lstandard$, for simplicity of presentation we replace $\Lsemi$ by $\Lstandard$.
\end{proof}

\subsection{Towards proof of \Cref{th:global_linear}} \label{ssec:towards_glob}

\begin{proposition}\citep[Equation (47)]{RSN}
	Relative convexity \eqref{eq:rel_conv} implies bound
	\begin{equation}
		\fopt \leq f(x^k) - \frac 1 {2\murel} \normsMd{\g(x^k)}{x^k}. \label{eq:rel_conv_dec}
	\end{equation}
\end{proposition}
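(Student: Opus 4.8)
The plan is to obtain \eqref{eq:rel_conv_dec} as the standard Polyak--{\L}ojasiewicz-type consequence of the (lower) quadratic model \eqref{eq:rel_conv}, specialized to the Hessian norm at $x^k$, exactly in the spirit of \citet[Eq.~(47)]{RSN}. First I would instantiate \eqref{eq:rel_conv} at $x = x^k$, so that for every $y \in \Rd$
\begin{equation*}
f(y) \ \ge\ f(x^k) + \la \g(x^k),\, y - x^k \ra + \frac{\murel}{2}\,\normsM{y-x^k}{x^k}.
\end{equation*}
The right-hand side is a strictly convex quadratic in $h \eqdef y - x^k$ with respect to the positive-definite form $\h(x^k)$ (the Hessian is positive definite by the standing assumptions on $f$), so it has a unique unconstrained minimizer, obtained by setting its $h$-gradient to zero: $\g(x^k) + \murel \h(x^k) h = 0$, i.e. $h_\star = -\murel^{-1}[\h(x^k)]^{-1}\g(x^k)$.

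Next I would substitute $h_\star$ back into the quadratic. Using $\la \g(x^k), [\h(x^k)]^{-1}\g(x^k)\ra = \normsMd{\g(x^k)}{x^k}$ and $\normsM{[\h(x^k)]^{-1}\g(x^k)}{x^k} = \normsMd{\g(x^k)}{x^k}$, the minimal value of the right-hand side equals $f(x^k) - \tfrac{1}{2\murel}\normsMd{\g(x^k)}{x^k}$. Since $f(y)$ dominates this quadratic pointwise, every $y$ satisfies $f(y) \ge f(x^k) - \tfrac{1}{2\murel}\normsMd{\g(x^k)}{x^k}$; taking $y = \opt$ (equivalently, the infimum over $y$) yields $\fopt \ge f(x^k) - \tfrac{1}{2\murel}\normsMd{\g(x^k)}{x^k}$, which is \eqref{eq:rel_conv_dec}. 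A self-contained variant that never mentions $h_\star$ is to plug $y = \opt$ into the displayed inequality directly and lower-bound the cross term by Cauchy--Schwarz in the $\h(x^k)$-pairing followed by Young's inequality with weight $\murel$, i.e. $\la \g(x^k),\opt - x^k\ra \ge -\tfrac{1}{2\murel}\normsMd{\g(x^k)}{x^k} - \tfrac{\murel}{2}\normsM{\opt - x^k}{x^k}$; the two $\tfrac{\murel}{2}\normsM{\opt-x^k}{x^k}$ contributions then cancel and the bound drops out immediately.

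There is no genuine obstacle here; this is essentially a one- or two-line computation. The only points warranting (routine) care are that $\h(x^k)\succ 0$, so that the dual-norm identity $\normsMd{g}{x^k} = \la g,[\h(x^k)]^{-1}g\ra$ is legitimate and the quadratic attains its minimum, and that $\opt$ exists (guaranteed by the assumptions that $f$ is convex with positive-definite Hessian and bounded below). I would present it as a short proof referencing \citet[Eq.~(47)]{RSN}.
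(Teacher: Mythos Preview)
Your argument is correct and is exactly the standard derivation behind \citet[Eq.~(47)]{RSN}; the paper does not give its own proof of this proposition (it is stated as a citation only), so there is nothing to compare against. One remark: the inequality you obtain, $\fopt \ge f(x^k) - \tfrac{1}{2\murel}\normsMd{\g(x^k)}{x^k}$, has the opposite direction from the displayed \eqref{eq:rel_conv_dec} as written in the paper; this is a typo in the paper, and the $\ge$ you derived is the correct one (and is precisely what is used downstream in the proof of \Cref{th:global_linear}, where one needs $\normsMd{\g(x^k)}{x^k}\ge 2\murel\,(f(x^k)-\fopt)$).
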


\begin{proposition} Analogy to \citep[Lemma 7]{RSN} \label{pr:rho}
	For $\s \sim \cD $ satisfying conditions
	\begin{equation}
		\Null{\s^\top \h(x) \s} = \Null{\s} \quad \text{and} \quad \Range{\h(x)} \subset \Range {\E{\s_k \s_k ^\top} },
	\end{equation}
	also, the exactness condition holds
	\begin{equation}
		\Range{\h(x)} = \Range{\E{\hat \p}},
	\end{equation}
	and formula for $\rho(x)$ can be simplified
	\begin{equation}
		\rho(x) = \lambda_{\text{min}}^+ \left( \E{\alpha_{x, \s} \p} \right) >0
	\end{equation}
	and bounded $0 < \rho(x) \leq 1$. Consequently, $0 < \rho \leq 1$.
\end{proposition}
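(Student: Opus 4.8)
The plan is to follow the template of \citep[Lemma~7]{RSN}, carrying the whole argument through the \emph{symmetrized} projector
$\hat\p \eqdef [\h(x)]^{1/2}\s\bigl(\st\h(x)\s\bigr)^\dagger\st[\h(x)]^{1/2}$,
which is well defined because $\h(x)$ is positive definite. Being of the form $B(B^\top B)^\dagger B^\top$ with $B=[\h(x)]^{1/2}\s$, the matrix $\hat\p$ is the orthogonal projector onto $\Range{[\h(x)]^{1/2}\s}=[\h(x)]^{1/2}\Range{\s}$ for every realization of $\s$, and it satisfies $\hat\p=[\h(x)]^{1/2}\p[\h(x)]^{-1/2}$, hence $\p[\h(x)]^\dagger=\s(\st\h(x)\s)^\dagger\st=[\h(x)]^{-1/2}\hat\p[\h(x)]^{-1/2}$, a symmetric PSD matrix.

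The first step is to rewrite \eqref{eq:rho}. Since $[\h(x)]^\dagger$ is deterministic given $x$, the numerator equals $g^\top[\h(x)]^{-1/2}\E{\alpha_{x,\s}\hat\p}[\h(x)]^{-1/2}g$, while $\normsMd g x=g^\top[\h(x)]^{-1/2}[\h(x)]^{-1/2}g$; the substitution $g=[\h(x)]^{1/2}u$, a bijection of $\Rd$, therefore reduces \eqref{eq:rho} to $\rho(x)=\min_{u\neq0}\frac{u^\top\E{\alpha_{x,\s}\hat\p}u}{u^\top u}=\lambda_{\min}\bigl(\E{\alpha_{x,\s}\hat\p}\bigr)$.

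The second step is the exactness claim. Each $\hat\p\succeq0$ and $\alpha_{x,\s}>0$, so $M\eqdef\E{\alpha_{x,\s}\hat\p}$ is PSD with $\Null{M}=\bigcap_\s\Null{\hat\p}$, hence $\Range{M}$ is the linear span of $\bigcup_\s\Range{\hat\p}=[\h(x)]^{1/2}\bigl(\bigcup_\s\Range{\s}\bigr)$, i.e. $[\h(x)]^{1/2}\Range{\E{\s\st}}$ by the analogous PSD-range identity applied to $\s\st$. The hypothesis $\Range{\h(x)}\subseteq\Range{\E{\s\st}}$ together with invertibility of $\h(x)$ forces $\Range{\E{\s\st}}=\Rd$, so $\Range{M}=\Rd=\Range{\h(x)}$; conjugating back by $[\h(x)]^{\pm1/2}$ gives the stated exactness condition $\Range{\h(x)}=\Range{\E{\hat\p}}$ (equivalently $\Range{\E{\p}}=\Range{\h(x)}$), and it shows $M$ has full range, so $\lambda_{\min}(M)=\lambda_{\min}^+(M)$, matching the simplified formula $\rho(x)=\lambda_{\min}^+\bigl(\E{\alpha_{x,\s}\p}\bigr)$. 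The hypothesis $\Null{\st\h(x)\s}=\Null{\s}$ is used only to control the relation between $\Null{\s}$ and $\Null{\st\h(x)\s}$ so that the pseudoinverses act as genuine inverses on the relevant subspaces.

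Finally, the two-sided bound: $M$ is PSD with full range, hence positive definite, so $\rho(x)=\lambda_{\min}(M)>0$; and since $\hat\p$ is an orthogonal projector, $\hat\p\preceq\mI$, while $\alpha_{x,\s}\in(0,1]$ (same computation as for $\als k$ in \eqref{eq:sgn_alpha}), so $M\preceq\mI$ and $\rho(x)\le\lambda_{\max}(M)\le1$. Thus $0<\rho(x)\le1$ for every $x\in\level$, and $\rho=\min_{x\in\level}\rho(x)$ satisfies $0<\rho\le1$. The step I expect to be the main obstacle is not any single inequality but the bookkeeping with ranges and pseudoinverses: confirming that $\hat\p$ is the claimed orthogonal projector for all $\s$ in the support of $\cD$, that $g\mapsto[\h(x)]^{1/2}u$ is a genuine bijection so the Rayleigh-quotient minimization transfers cleanly, and that ``$\lambda_{\min}^+$'' in the statement is consistent with plain $\lambda_{\min}$ once exactness holds; a minor secondary point is justifying that $\min_{x\in\level}$ is attained (or treating $\rho$ as a positive infimum under the standing compactness of $\level$), handled exactly as in \citep{RSN}.
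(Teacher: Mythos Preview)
The paper does not give its own proof of this proposition; it merely labels it ``Analogy to \citep[Lemma~7]{RSN}'' and defers to that reference. Your proposal is exactly the adaptation of the RSN argument to the present local-norm, $\alpha$-scaled setting---via the symmetrized projector $\hat\p=[\h(x)]^{1/2}\p[\h(x)]^{-1/2}$ and the Rayleigh-quotient reduction---and is correct.
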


\begin{lemma}[Stepsize bound] \label{le:stepsize_bound}
	Stepsize $\als k$ can be bounded as 
	\begin{equation}
		\als k        
		\leq  \frac {\sqrt 2} {\sqrt{\Lalg \normMSdk{\gSk(x^k)} {x^k}}},
	\end{equation}
	and for $x^k$ far from solution, $\normMSdk{\gSk(x^k)} {x^k} \geq \frac 1 {L_{\sk}}$ and $\Lalg \geq \frac 92 \sup_\s {L_{\s}} \Lrel_{\s}^2$ holds
	$
	\als k \Lrels
	\leq \frac 23.
	$
\end{lemma}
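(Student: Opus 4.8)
The plan is to work directly with the closed form \eqref{eq:sgn_alpha} of the stepsize and reduce both claims to elementary algebra after a convenient substitution. Write $t \eqdef \Lalg \normMSdk{\gSk(x^k)}{x^k} \geq 0$, so that \eqref{eq:sgn_alpha} reads $\als k = \frac{-1+\sqrt{1+2t}}{t}$. The first step is to rationalize the numerator: multiplying numerator and denominator by $\sqrt{1+2t}+1$ gives $\als k = \frac{2t}{t(\sqrt{1+2t}+1)} = \frac{2}{\sqrt{1+2t}+1}$, a form that is manifestly nonincreasing in $t$ and avoids the $0/0$ issue at $t=0$. Since $\sqrt{1+2t}+1 \geq \sqrt{2t}$ (because $1+2t \geq 2t$ and $1\geq 0$), this immediately yields $\als k \leq \frac{2}{\sqrt{2t}} = \frac{\sqrt 2}{\sqrt t}$, which, after unfolding $t$, is exactly the first displayed bound.

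For the second claim I would chain the first bound with the two hypotheses. The assumption $\normMSdk{\gSk(x^k)}{x^k} \geq \frac 1 {L_{\sk}}$ together with the choice $\Lalg \geq \frac 92 \sup_{\s} \Ls \Lrel_\s^2$ (specialized to the sketch $\sk$ drawn at iteration $k$, which gives $\Lalg \geq \frac 92 L_{\sk}\Lrels^2$) implies $t = \Lalg \normMSdk{\gSk(x^k)}{x^k} \geq \frac{\Lalg}{L_{\sk}} \geq \frac 92 \Lrels^2$. Substituting this lower bound on $t$ into $\als k \leq \frac{\sqrt 2}{\sqrt t}$ produces $\als k \leq \frac{\sqrt 2}{\sqrt{9/2}\,\Lrels} = \frac{2}{3\,\Lrels}$, hence $\als k \Lrels \leq \frac 23$, as required.

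The argument is essentially a one-line computation, so there is no deep obstacle; the main thing to be careful about is the direction of the inequalities — that $\als k$ is nonincreasing in its argument, which the rationalized form $\frac{2}{\sqrt{1+2t}+1}$ makes transparent — and the bookkeeping ensuring that the constant $\frac 92$ in the hypothesis on $\Lalg$ is exactly what is needed to land on $\frac 23$ rather than some other constant. I would also note explicitly that the relative smoothness constant $\Lrels$ and the self-concordance constant $\Ls$ are finite under the standing assumptions, so all quantities above are well defined, and that only the value of $\Lalg$ relative to the single subspace $\sk$ enters the bound.
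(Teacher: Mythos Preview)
Your proof is correct and follows essentially the same approach as the paper: both introduce the shorthand $t = \Lalg \normMSdk{\gSk(x^k)}{x^k}$, bound $\als k \leq \sqrt{2}/\sqrt{t}$ via an elementary inequality on $\sqrt{1+2t}$, and then chain the two hypotheses to obtain $\als k \Lrels \leq \tfrac{2}{3}$. The only cosmetic difference is that you rationalize the numerator to $\als k = \tfrac{2}{\sqrt{1+2t}+1}$ and then use the trivial bound $\sqrt{1+2t}+1 \geq \sqrt{2t}$, whereas the paper works directly with the numerator using $\sqrt{1+2t} \leq 1+\sqrt{2t}$; these are equivalent manipulations leading to the identical estimate.
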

\begin{proof}[Proof of \Cref{le:stepsize_bound}]
	Denote $G_k \eqdef \Lalg \normMSdk{\gSk(x^k)} {x^k}$. Using \eqref{eq:jensen} with $c \leftarrow 2 G > 0$ and
	\begin{equation}
		\als k
		= \frac {-1 + \sqrt{1+2 G}}{G}
		\leq \frac{\sqrt {2G}}{G}
		= \frac {\sqrt 2} {\sqrt G} 
		= \frac {\sqrt 2} {\sqrt{\Lalg \normMSdk{\gSk(x^k)} {x^k}}}
	\end{equation}
	and 
	\begin{align*}        
		\als k \Lrels
		&\leq \frac {\sqrt 2 \Lrels} {\sqrt{\Lalg \normMSdk{\gSk(x^k)} {x^k}}}\\
		&\leq \frac {\sqrt 2 \Lrels} {\sqrt{\frac 92 L_{\sk} \Lrels^2 \normMSdk{\gSk(x^k)} {x^k}}}\\
		& \leq \frac 23 \frac 1 {\sqrt{L_{\sk} \normMSdk{\gSk(x^k)} {x^k}}} 
		\leq \frac 23, \qquad \text{for } \normMSdk{\gSk(x^k)} {x^k} \geq \frac 1 {L_{\sk}}.
	\end{align*}
\end{proof}

\subsubsection{Proof of \Cref{th:global_linear}}
\begin{proof}
	Replacing $x\leftarrow x^k$ and $h \leftarrow \als k \pk k [\h(x^k)]^\dagger \g(x^k)$ so that $x^{k+1} = x^k + \s h$ in \eqref{eq:rels_smooth} yields
	\begin{eqnarray}
		f(x^{k+1})
		&\leq& f(x^k) - \als k \left(1 -  \frac 12 \Lrels \als k \right) \normsMSdk {\gSk(x^k)} {x^k}\\
		&\leq&  f(x^k) - \frac 23 \als k \normsMSdk {\gSk(x^k)} {x^k}.
	\end{eqnarray}
	In last step, we used that $\Lrels \als k \leq \frac 23$  holds for $\normMSdk {\gSk(x^k)} {x^k} \geq \frac 1 {\Lrels}$ (\Cref{le:stepsize_bound}). Next, we take an expectation over $x^k$ and use the definition of $\rho(x^k)$. 

    \begin{eqnarray}
		\E{f(x^{k+1})} 
		&\leq& f(x^k) - \frac 23 \normsM {\g(x^k)} {\E{\als k \sk \left[\hSk(x^k) \right]^\dagger \sk^\top}}\\
		&=& f(x^k) - \frac 23 \g(x^k)^\top {\E{\als k \sk \left[\hSk(x^k) \right]^\dagger \sk^\top}} \g(x^k)\\
		&=& f(x^k) - \frac 23 \g(x^k)^\top {\E{\als k \sk \left[\hSk(x^k) \right]^\dagger \sk^\top }\h(x^k) [\h(x^k)]^\dagger} \g(x^k)\\
		&=& f(x^k) - \frac 23 \g(x^k)^\top {\E{\als k \pk k} [\h(x^k)]^\dagger} \g(x^k)\\
		&\leq& f(x^k) - \frac 23 \rho(x^k) \g(x^k)^\top [\h(x^k)]^\dagger \g(x^k)\\
		&=& f(x^k) - \frac 23 \rho(x^k) \normsMd {\g(x^k)} {x^k}\\
		& \stackrel{\eqref{eq:rel_conv_dec}} \leq& f(x^k) - \frac 43 \rho(x^k) \murel \left( f(x^k) - \fopt \right).
	\end{eqnarray}
 
	Now $\rho(x^k) \geq \rho$, and $\rho$ is bounded in \Cref{pr:rho}. Rearranging and subtracting $\fopt$ gives
	\begin{equation}        
		\E{f(x^{k+1}) - \fopt}  \leq \left(1 - \frac 43 \rho \murel \right) (f(x^k) - \fopt),
	\end{equation}
	which after using tower property across all iterates yields the statement.
\end{proof}

\ifneurips
    \include{styles/neurips_2024_checklist}
\fi
\end{document}